\newtheorem{theorem}{Theorem}[section]
\newtheorem{lemma}{Lemma}[section]
\newtheorem{proposition}{Proposition}[section]
\newtheorem{corollary}{Corollary}[section]
\newtheorem{conjecture}{Conjecture}[section]
\theoremstyle{definition}
\newtheorem{definition}{Definition}[section]
\newtheorem{remark}{Remark}[section]
\DeclareMathOperator{\IR}{IR}
\DeclareMathOperator{\Lon}{Lon}
\DeclareMathOperator{\Soc}{Soc}
\DeclareMathOperator{\pn}{pn}
\DeclareMathOperator{\coll}{coll}
\DeclareMathOperator{\fold}{fold}
\begin{document}
\title[Isoperimetry, Stability, and Irredundance in Direct Products]{Isoperimetry, Stability, and Irredundance in Direct Products}

\author{Noga Alon$^*$}
\address{$^*$Princeton University and Tel Aviv University}
\email{nalon@math.princeton.edu}

\author{Colin Defant$^\dagger$}
\address{$^\dagger$Princeton University}
\email{cdefant@princeton.edu}

\begin{abstract}
The direct product of graphs $G_1,\ldots,G_n$ is the graph with vertex set $V(G_1)\times\cdots\times V(G_n)$ in which two vertices $(g_1,\ldots,g_n)$ and $(g_1',\ldots,g_n')$ are adjacent if and only if $g_i$ is adjacent to $g_i'$ in $G_i$ for all $i$. Building off of the recent work of Brakensiek, we prove an optimal vertex isoperimetric inequality for direct products of complete multipartite graphs. Applying this inequality, we derive a stability result for independent sets in direct products of balanced complete multipartite graphs, showing that every large independent set must be close to the maximal independent set determined by setting one of the coordinates to be constant. Armed with these isoperimetry and stability results, we prove that the upper irredundance number of a direct product of balanced complete multipartite graphs is equal to its independence number in all but at most $37$ cases. This proves most of a conjecture of Burcroff that arose as a strengthening of a conjecture of the second author and Iyer. We also propose a further strengthening of Burcroff's conjecture.   
\end{abstract}

\maketitle

\bigskip

\section{Introduction}\label{Intro}

All graphs in this paper are assumed to be simple. We denote the vertex set and edge set of a graph $G$ by $V(G)$ and $E(G)$, respectively. The letter $\mu$ will denote the uniform probability measure on $V(G)$. That is, $\mu(S)=|S|/|V(G)|$ for all $S\subseteq V(G)$. The \emph{direct product} (also called the \emph{tensor product}, \emph{Kronecker product}, \emph{weak product}, or \emph{conjunction}) of graphs $G_1,\ldots,G_n$, denoted by either $G_1\times\cdots\times G_n$ or $\prod_{i=1}^nG_i$, is the graph with vertex set $V(G_1\times\cdots\times G_n)=V(G_1)\times\cdots\times V(G_n)$ in which two vertices $(g_1,\ldots,g_n)$ and $(g_1',\ldots,g_n')$ are adjacent if and only if $\{g_i,g_i'\}\in E(G)$ for all $i\in[n]$. We also use the product notation $\prod_i G_i$ to denote a direct product of a collection of graphs. Much of this paper is devoted to studying direct products of balanced complete multipartite graphs, which are complete multipartite graphs in which the partite sets all have the same size. More precisely, if $K[u,t]$ denotes the complete multipartite graph consisting of $t$ partite sets of size $u$, then we are concerned with graphs of the form $\prod_{i=1}^n K[u_i,t_i]$. 

One motivation for studying these graphs comes from the investigation of \emph{unitary Cayley graphs}, which are specific graphs associated to commutative rings with unity. Unitary Cayley graphs have become a popular topic over the past few decades \cite{burcroff, coolnames, defant, defant2, defantandiyer, dejter, fuchs, klotzsander, mandm, mandmclique}, in part because of their connection with a theorem of Erd\H{o}s and Evans \cite{eande} that led to the notion of the \emph{representation number} of a graph \cite{Akhtar, Akhtar2, Akhtar3, Evans, Evans2, Gallian, Narayan} (see Section 7.6 of \cite{Gallian} for more details). The authors of \cite{coolnames} have used a structure theorem for Artinian rings to show that the unitary Cayley graph of a finite ring is isomorphic to a direct product of balanced complete multipartite graphs. 

Hundreds of papers in graph theory have studied what is called the \emph{domination chain}; this is a collection of graph parameters that always satisfy a certain chain of inequalities. The aim is usually to show that these inequalities are actually equalities for certain types of graphs. We only discuss three of these graph parameters and refer the interested reader to Section 3.5 of \cite{Haynes} for more information about the domination chain. The first parameter we consider is the \emph{independence number} of a graph $G$, denoted $\alpha(G)$, which is the largest size of an independent set in $G$. A related notion is that of the \emph{independence ratio} of a graph, which is defined by $\beta(G)=\alpha(G)/|V(G)|$. The \emph{closed neighborhood} of a set $S\subseteq V(G)$, denoted $N[S]$, is the union of $S$ with all of the neighbors of the vertices in $S$. We say $S$ is \emph{dominating} if $N[S]=V(G)$. We say $S$ is \emph{irredundant} if $N[S\setminus\{v\}]\neq N[S]$ for all $v\in S$. The \emph{upper domination number} of $G$, denoted $\Gamma(G)$, is the maximum size of an irredundant dominating set in $G$. The \emph{upper irredundance number} of $G$, denoted $\text{IR}(G)$, is the maximum size of an irredundant set in $G$. Every maximal independent set is an irredundant dominating set, and every irredundant dominating set is obviously an irredundant set. Therefore, we always have the chain of inequalities \[\alpha(G)\leq \Gamma(G)\leq\IR(G),\] which comprises the upper portion of the domination chain. One of the notable results concerning these parameters is a theorem of Cheston and Fricke, which shows that $\alpha(G)=\IR(G)$ whenever $G$ is strongly perfect \cite{Cheston}. 
 
Suppose now that $G=\prod_{i=1}^nK[u_i,t_i]$ is a direct product of balanced complete mulipartite graphs, where $t_1\geq\cdots\geq t_n$. It is straightforward to check that $\alpha(G)=|V(G)|/t_n$ (alternatively, $\beta(G)=1/t_n$). While studying domination parameters of unitary Cayley graphs, the second author and Iyer were led to conjecture that for these graphs $\alpha(G)=\Gamma(G)$ \cite{defantandiyer}. They proved this conjecture in the cases $t_n\leq 2$ and $n\leq 3$. Burcroff observed that none of the arguments proving those cases of the conjecture used the fact that the sets under consideration were dominating \cite{burcroff}. In other words, $\alpha(G)=\IR(G)$ when $t_n\leq 2$ or $n\leq 3$. She then made the following stronger conjecture. 

\begin{conjecture}[\!\!\cite{burcroff}]\label{Conj1}
If $G=\prod_{i=1}^nK[u_i,t_i]$ is a direct product of balanced complete multipartite graphs, then $\alpha(G)=\IR(G)$. 
\end{conjecture}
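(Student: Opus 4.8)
The plan is to attack Conjecture~\ref{Conj1} by combining the vertex isoperimetric inequality and the stability result promised in the abstract, reducing the equality $\alpha(G)=\IR(G)$ to a finite check in the exceptional cases. First I would reformulate the problem measure-theoretically: writing $G=\prod_{i=1}^nK[u_i,t_i]$ with $t_1\geq\cdots\geq t_n$, an irredundant set $S$ comes equipped, for each $v\in S$, with a \emph{private neighbor} $p(v)\in N[S]\setminus N[S\setminus\{v\}]$; the map $v\mapsto p(v)$ need not be injective, but one can push through a weighting/fractional-matching argument (or a direct counting argument on the bipartite "domination" incidence structure between $S$ and its private neighbors) to show that if $\IR(G)>\alpha(G)$ then $S$ must be large — comfortably in the range where the stability theorem applies. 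The key point is that an irredundant set that is not too small behaves, up to a small error set, like an independent set.

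Next I would invoke stability. The stability result says that every independent set $I$ in a balanced direct product with $\mu(I)$ close to $\beta(G)=1/t_n$ must be close (in $\mu$) to a \emph{canonical} maximum independent set, i.e. a set of the form $\{x : x_i\in A_i\}$ obtained by fixing the $i$-th coordinate to lie in one partite class of $K[u_i,t_i]$ for the index $i=n$ achieving $t_n$ (or more generally any index with $t_i=t_n$). I would apply this not to $S$ itself but to a slightly enlarged or cleaned-up set derived from $S$; the isoperimetric inequality controls the neighborhood expansion $\mu(N[S])$ and forces $S\setminus\{\text{private-neighbor structure}\}$ to be nearly independent, hence nearly canonical. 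Once $S$ is known to be within $\epsilon$ of a canonical independent set $C$, I would analyze the "defect" $S\setminus C$ and the "deficiency" $C\setminus S$: each vertex in $S\setminus C$ has an edge into $C$, and a short local argument — examining one coordinate at a time and using that partite classes have size $u_i\geq 1$ — shows that accommodating even one vertex outside $C$ into an irredundant set costs more in lost private neighbors than it gains, so $|S|\leq|C|=\alpha(G)$. This contradiction closes all cases where the stability theorem has a nonvacuous hypothesis.

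The main obstacle, and the reason the statement is "all but at most $37$ cases," is the regime where $t_n$ is small (notably $t_n=2$ or small values of $n$, $u_i$) or where $\mu(S)$ cannot be pushed high enough to enter the stability window — there the quantitative isoperimetric bound is too weak to force near-canonical structure, and one must instead rely on the already-known results of Defant--Iyer and Burcroff ($t_n\leq 2$, $n\leq 3$) together with an explicit finite enumeration. So after the main argument I would carefully delimit the parameter region $(n;u_1,\ldots,u_n;t_1,\ldots,t_n)$ in which the isoperimetry-plus-stability machine succeeds, tabulate the finitely many residual tuples not covered either by that region or by prior work, and either verify $\alpha=\IR$ directly for them or record them as the genuine exceptions. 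I expect the delicate bookkeeping to be (i) making the private-neighbor counting quantitatively strong enough to land in the stability window with room to spare, and (ii) the endgame local exchange argument showing a canonical-up-to-$\epsilon$ irredundant set cannot beat $\alpha(G)$, since irredundance (unlike independence) is not monotone and one must track both $N[S]$ and $N[S\setminus\{v\}]$ simultaneously.
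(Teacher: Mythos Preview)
Your overall architecture matches the paper's: use stability to pin a large irredundant set near a canonical slice $J_{a,j}$, then use isoperimetry for the endgame, and fall back on prior work plus a finite list for the small-parameter regime. But there is a genuine gap at the step where you pass from ``$S$ is irredundant and large'' to ``$S$ is nearly independent.'' You propose a weighting/fractional-matching or counting argument on the private-neighbor incidence structure; the paper instead proves a sharp structural bound (Theorem~\ref{Thm3}) via the polynomial method: every irredundant $S$ decomposes as $\Lon(S)\cup\Soc(S)$ with $\Lon(S)$ independent and $|\Soc(S)|\le 2^n$. This is obtained by attaching to each social vertex $v$ the multilinear polynomial $f_{p_v}(x_1,\dots,x_n)=\prod_i(x_i-c_{p_v}(i))$ built from a chosen private neighbor $p_v$, and observing that these polynomials are linearly independent in a $2^n$-dimensional space. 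Without a bound of this quality you cannot guarantee that $\mu(\Lon(S))$ lands inside the stability window $\mu>\omega(t_n)$ with the quantitative slack the endgame needs; vague counting on the bipartite domination structure does not obviously give anything close to $2^n$. (Incidentally, private neighbors of distinct social vertices \emph{are} distinct, so the map $v\mapsto p_v$ on $\Soc(S)$ is injective---your remark that it need not be is off.)

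Your proposed endgame is also different and less precise than the paper's. You sketch a local exchange argument (``one vertex outside $C$ costs more than it gains''), but irredundance is nonmonotone and such swaps are hard to control. The paper instead applies stability to $\Lon(S)$ to get $\mu(\Lon(S)\setminus J_{a,j})<4\varepsilon^{\eta(t_n)}$, builds the auxiliary set $Y=(S\setminus J_{a,j})\cup\{p_v:v\in\Soc(S)\cap J_{a,j}\}$, and then combines the isoperimetric lower bound $\mu(\partial Y\cap J_{a,j})\ge t_j^{-1}\mu(Y)^{1/\eta(t_n)}$ with the observation that $\partial Y\cap J_{a,j}\subseteq J_{a,j}\setminus\Lon(S)$ to force $\mu(Y)^{1-1/\eta(t_n)}\ge 1/t_j$. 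Plugging in the $2^n$ bound for $|\Soc(S)|$ then yields a numerical contradiction except for the $37$ listed graphs. So the isoperimetry is used globally on a carefully designed set rather than locally vertex-by-vertex, and the polynomial-method bound is what drives all the quantitative estimates.
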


Making progress toward this conjecture, Burcroff proved the following theorem.

\begin{theorem}[\!\!\cite{burcroff}]\label{Thm1}
If $G=\prod_{i=1}^nK[u_i,t_i]$, where $t_1\geq\cdots\geq t_n\geq 2$, then \[\IR(G)\leq \min\left\{\alpha(G)+2t_2\cdots t_n,\,\frac{t_n^2}{2t_n-1}\alpha(G)\right\}.\]
\end{theorem}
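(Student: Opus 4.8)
The plan is to prove the bound $\IR(G)\le\alpha(G)+2t_2\cdots t_n$ and the bound $\IR(G)\le\frac{t_n^2}{2t_n-1}\alpha(G)$ separately, since they capture different regimes (the first is strong when $t_n$ is small relative to the other parameters, the second when all $t_i$ are comparable). Throughout, fix an irredundant set $S\subseteq V(G)$ of maximum size $\IR(G)$. Irredundance means that every $v\in S$ has a \emph{private neighbor}: a vertex $p(v)$ (possibly $v$ itself) with $p(v)\in N[v]$ but $p(v)\notin N[w]$ for all $w\in S\setminus\{v\}$. I would split $S=A\cup B$ where $A$ is the set of vertices that are their own private neighbor (i.e.\ $v\in S$ with no neighbor in $S\setminus\{v\}$) and $B=S\setminus A$ is the set of vertices having an \emph{external} private neighbor. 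The point of this decomposition is that $A$ is itself an independent set — in fact $A$ has no edges to any of $S$ — so $|A|\le\alpha(G)$, and the entire problem reduces to bounding $|B|$.

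For the first bound, I would show $|B|\le 2t_2\cdots t_n$. Here the key structural fact about $G=\prod_i K[u_i,t_i]$ is that a coordinate profile behaves like a word over the ``partite-class'' alphabets: write each vertex as $(x_1,\ldots,x_n)$ where $x_i$ records which of the $t_i$ partite classes of $K[u_i,t_i]$ the $i$-th coordinate lies in (the size-$u_i$ refinement within a class is irrelevant for adjacency). Two vertices are adjacent iff their class-profiles differ in every coordinate. For $v\in B$ with external private neighbor $q$, the class-profile of $q$ differs from that of $v$ in every coordinate and agrees-in-at-least-one-coordinate with every other element of $S$; a counting/pigeonhole argument on these profiles — exploiting that $q$ is ``blocked'' from all of $S\setminus\{v\}$ but reachable from $v$ — should force the profiles of the $B$-vertices (or their private neighbors) to be confined to a set of size at most $2t_2\cdots t_n$. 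The natural way to organize this is to project onto coordinates $2,\ldots,n$ (there are $t_2\cdots t_n$ class-profiles there) and argue that each such projected profile is hit by at most two elements of $B$; the factor $2$ presumably comes from the two ``sides'' one gets when the first coordinate $K[u_1,t_1]$ with $t_1$ largest is handled by a parity or interval argument. Then $\IR(G)=|A|+|B|\le\alpha(G)+2t_2\cdots t_n$.

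For the second bound, the plan is a double-counting argument over the closed neighborhoods. Assign to each $v\in S$ its private neighbor $p(v)$; the sets $\{p(v):v\in S\}$ are distinct, and more importantly the closed neighborhoods $N[p(v)]$ are ``almost disjoint'' relative to $S$ in a quantifiable way. Using $\mu$, one estimates $\mu(N[v])$ and $\mu(N[p(v)])$ in terms of the $t_i$: a single vertex's closed neighborhood in $G$ has measure roughly $\prod_i(1-1/t_i)+\text{(small)}$, and in particular it is bounded below by something like $1-1/t_n$ in the relevant worst case. Summing the private-neighbor contributions and comparing against the total measure $1$, while accounting for the overlap that irredundance still permits, should yield $\IR(G)\cdot c\le 1$ for the appropriate constant $c$, which rearranges to $\IR(G)\le\frac{t_n^2}{2t_n-1}\alpha(G)$ after substituting $\alpha(G)=|V(G)|/t_n$. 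The fraction $\frac{t_n^2}{2t_n-1}$ strongly suggests the inequality $\frac{1}{t_n}+\frac{1}{t_n}-\frac{1}{t_n^2}=\frac{2t_n-1}{t_n^2}$, i.e.\ an inclusion–exclusion lower bound $\mu(N[v]\cup N[p(v)])\ge \frac{2t_n-1}{t_n^2}$-type estimate aggregated over a carefully chosen matching, so I would look for a way to pair up elements of $S$ (or pair each $v$ with $p(v)$) so that the union of the two neighborhoods is large and these unions are disjoint.

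I expect the main obstacle to be the combinatorial heart of the first bound: proving that the external-private-neighbor vertices $B$ project onto coordinates $2,\ldots,n$ with multiplicity at most $2$. The difficulty is that irredundance is a fairly weak, ``local'' constraint, so one has to extract global structure from the interplay of many private-neighbor conditions simultaneously — in particular understanding why the coordinate with the largest $t_i$ can be quotiented out at the cost of only a factor of $2$, rather than a factor depending on $t_1$. A secondary obstacle is making the measure estimates in the second bound tight enough: naive bounds on $\mu(N[v])$ lose too much, so one needs the exact worst-case configuration (presumably all $t_i$ equal to $t_n$ and all $u_i=1$, i.e.\ $G$ a power of $K_{t_n}$) to be the extremal case, and verifying that the argument is monotone in the right direction as the $t_i$ and $u_i$ vary.
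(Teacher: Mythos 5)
A preliminary point: the paper does not prove Theorem~\ref{Thm1} at all --- it is quoted from Burcroff \cite{burcroff} --- and the paper's own contribution in this direction is the strictly stronger Theorem~\ref{Thm3}, which shows $\IR(G)\le\alpha(G)+2^n$. Your split of $S$ into vertices that are their own private neighbor and vertices needing an external private neighbor is precisely the $\Lon(S)/\Soc(S)$ decomposition of Section~\ref{Near}, so the framing is right; but where you reach for a projection argument, the paper assigns to each social vertex $v$ the multilinear polynomial $\prod_{i=1}^n(x_i-c_{p_v}(i))$ built from its private neighbor's class profile, checks that these vanish on the profiles of all other social vertices but not on $v$'s own, concludes linear independence, and gets $|\Soc(S)|\le 2^n$ from the dimension of the space of multilinear polynomials. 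That is a genuinely different mechanism and it subsumes the additive half of Theorem~\ref{Thm1}, since $2t_2\cdots t_n\ge 2^n$ when every $t_i\ge 2$.

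On your first bound: projecting $\Soc(S)$ onto coordinates $2,\ldots,n$ and showing each projected class-profile is hit at most twice is a workable route to $|\Soc(S)|\le 2t_2\cdots t_n$, but your guessed reason for the factor $2$ (a ``parity or interval argument'' in the largest coordinate) is not it. The correct argument is a short pigeonhole using private neighbors. If $v,w,u\in\Soc(S)$ share a class profile in coordinates $2,\ldots,n$, then the private neighbor $p_v$ differs from $v$ --- hence from $w$ and $u$ --- in coordinates $2,\ldots,n$, yet must agree with each of $w$ and $u$ somewhere since it is not adjacent to them; so it agrees with both in coordinate $1$, forcing $w_1=u_1$. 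But then $w$ and $u$ have identical class profiles in every coordinate, hence identical open neighborhoods, so neither can have an external private neighbor, contradicting $w,u\in\Soc(S)$. You should state this rather than appeal to parity.

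Your plan for the multiplicative bound $\IR(G)\le\frac{t_n^2}{2t_n-1}\alpha(G)$ does not work as written. The claim that a closed neighborhood satisfies $\mu(N[v])\ge 1-1/t_n$ ``in the relevant worst case'' is false: for $G=K_t^n$ one has $\mu(N[v])=(1-1/t)^n+t^{-n}$, which tends to $0$ as $n$ grows, so there is no lower bound of that shape. Likewise the proposed $\mu(N[v]\cup N[p_v])\ge(2t_n-1)/t_n^2$ ``aggregated over a matching'' has no mechanism behind it: these unions overlap heavily across different $v$, and you give no disjointness that would let you sum them against total measure $1$. It is also worth noting that the obvious combination of the two facts you do have --- $|\Lon(S)|\le|V(G)|/t_n$ and $|\Lon(S)|+2|\Soc(S)|\le|V(G)|$ because the private neighbors of $\Soc(S)$ are distinct and lie outside $S$ --- yields only $|S|\le\frac{t_n+1}{2t_n}|V(G)|$, which is weaker than $\frac{t_n}{2t_n-1}|V(G)|$. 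So the multiplicative bound genuinely needs an extra ingredient that exploits the coordinate structure, and the proposal does not identify what that ingredient is.
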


In this article, we prove most of Conjecture \ref{Conj1}. More precisely, we explicitly list $37$ graphs $Z_1,\ldots,Z_{37}$ in Section \ref{Proof} and prove the following theorem. 

\begin{theorem}\label{Thm7}
Let $G=\prod_{i=1}^nK[u_i,t_i]$ be a direct product of balanced complete multipartite graphs. If $G$ is not one of the graphs $Z_1,\ldots,Z_{37}$ listed in Section \ref{Proof}, then \[\alpha(G)=\IR(G).\]  
\end{theorem}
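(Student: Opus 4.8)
The plan is to combine the isoperimetric inequality for direct products of complete multipartite graphs with the stability result for independent sets, and then use these two ingredients to bootstrap off Burcroff's bound (Theorem~\ref{Thm1}). Write $G=\prod_{i=1}^nK[u_i,t_i]$ with $t_1\geq\cdots\geq t_n$, and recall $\alpha(G)=|V(G)|/t_n$. The case $t_n=1$ is trivial (then $G$ is complete multipartite-free in the relevant coordinate and $\IR(G)=\alpha(G)=|V(G)|$ anyway, or one reduces to fewer factors), and $t_n=2$ is already handled by Burcroff, so I would assume $t_n\geq 3$ throughout. Let $S$ be an irredundant set with $|S|=\IR(G)$; by Theorem~\ref{Thm1} we already know $|S|\leq\frac{t_n^2}{2t_n-1}\alpha(G)$, so $\mu(S)\leq\frac{t_n}{2t_n-1}\leq\tfrac35$ for $t_n\geq 3$, and also $|S|\leq\alpha(G)+2t_2\cdots t_n$, i.e.\ $S$ exceeds the independence number by only a bounded additive amount.

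The core of the argument: for each $v\in S$, irredundance gives a \emph{private neighbor} $p(v)\in N[S]$ with $p(v)\notin N[S\setminus\{v\}]$, meaning $p(v)$'s closed neighborhood meets $S$ only in $v$. I would split $S$ into the set $S_0$ of vertices that are their own private neighbor (these form an independent set, since if $u,v\in S_0$ were adjacent then $v\in N[S\setminus\{u\}]$, killing $u$'s privacy) and the set $S_1=S\setminus S_0$ of vertices with an external private neighbor. The external private neighbors of distinct vertices of $S_1$ are distinct and lie outside $S$, and each such private neighbor $p(v)$ together with the structure of the direct product forces a local "collapsing" phenomenon. Here is where the isoperimetry enters: the neighborhood $N(S_1)$ inside $G$ must be small relative to $|S_1|$ because each vertex of $N(S_1)$ adjacent to $S_1$ can be adjacent to at most one vertex of $S$ once we pass to the private neighbors; running the vertex-isoperimetric inequality in reverse (a set with small vertex boundary/neighborhood must be close to one of the canonical "fix-a-coordinate" sets) forces $S_1$, hence essentially all of $S$, to be nearly contained in a single canonical maximal independent set $A_j=\{x: x_n\in P_j\}$ for some partite class $P_j$ of the last factor. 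This is exactly the content of the stability result advertised in the abstract, applied in the balanced case $u_1=\cdots=u_n$ (the general $u_i$ case reduces to the balanced case by a standard blow-up/covering argument, since adding pendant copies within a partite set does not change the combinatorial structure of independent or irredundant sets).

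Once $S$ is known to lie within $\epsilon|V(G)|$ of a canonical independent set $A_j$, I would finish by a direct counting argument: write $S=(S\cap A_j)\sqcup R$ with $|R|$ small, bound the number of vertices of $R$ that can retain private neighbors given that they are "surrounded" by $A_j\cap S$, and show that unless $n$, the $t_i$, and the $u_i$ are all small, the deficit $|A_j|-|S\cap A_j|$ cannot be compensated by $|R|$ — forcing $|S|\leq|A_j|=\alpha(G)$ and hence equality. The finitely many small configurations that survive this counting are precisely the exceptional graphs $Z_1,\ldots,Z_{37}$, which I would enumerate by a finite (computer-assisted) check once explicit thresholds on $n$, $\max_i t_i$, and $\prod_i u_i$ are extracted from the isoperimetry/stability constants. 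The main obstacle I anticipate is making the stability step quantitatively sharp enough: Theorem~\ref{Thm1} only gives an additive error of order $t_2\cdots t_n$, which is not automatically small compared to $|V(G)|$, so the isoperimetric inequality must be applied with care to the set $S_1$ of "genuinely irredundant" vertices rather than to $S$ itself, and the reverse-isoperimetry step must tolerate the fact that $\mu(S)$ can be as large as roughly $t_n/(2t_n-1)$ rather than just slightly above $1/t_n$ — ruling out that $S$ straddles two or more canonical independent sets is the delicate case and is where most of the work, and the exceptional list, will come from.
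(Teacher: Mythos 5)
Your high-level plan correctly identifies the three ingredients (a bound on the non-independent ``excess'' of an irredundant set, stability of large independent sets, and vertex isoperimetry), and your partition of $S$ into $S_0$ (self-private, which you correctly observe is independent) and $S_1$ (externally-private) is exactly the paper's partition into lonely and social vertices. But there is a concrete gap where you anticipate an ``obstacle'': you propose to control $|S_1|$ using only Burcroff's Theorem~\ref{Thm1}, which gives $|S_1|\leq 2t_2\cdots t_n$. That bound is too weak. It translates into $\mu(S_1)\leq 2/(u_1\cdots u_n t_1)$, which in the extremal balanced case $u_i=1$, $t_1=\cdots=t_n=t$ is $2/t$ --- already larger than $\mu(S)\approx 1/t$ itself, and certainly not $o(1/t_n)$. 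Consequently you cannot verify the hypothesis $\mu(S_0)>\omega(t_n)$ of the stability theorem, and the whole argument stalls exactly where you said it would.

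The paper's crucial extra ingredient, which your proposal does not supply or replace, is Theorem~\ref{Thm3}: a polynomial-method argument (assigning to each private neighbor $p_v$ the multilinear polynomial $\prod_i(x_i-c_{p_v}(i))$ and showing these are linearly independent in a $2^n$-dimensional space) proving $|\Soc(S)|\leq 2^n$. This improves the additive excess from $2t_2\cdots t_n$ to $2^n$, so that $\mu(\Soc(S))\leq 2^n/|V(G)|\leq (2/3)^n$ is exponentially small, which is what makes the stability hypothesis checkable for all but finitely many $G$ (the 37 exceptions). Without this step your $\varepsilon$ is not small, the stability theorem does not apply, and the ``delicate case'' you describe --- ruling out that $S$ straddles several canonical independent sets --- cannot be handled. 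Also note that in the actual proof the stability result is applied to the independent part $\Lon(S)=S_0$, not to $S_1$; the isoperimetric inequality is then applied to the auxiliary set $Y=(S\setminus J_{a,j})\cup P$, where $P$ collects one private neighbor per vertex of $\Soc(S)\cap J_{a,j}$, and the contradiction comes from comparing a lower bound on $\mu(\partial Y\cap J_{a,j})$ (isoperimetry) with the upper bound $\mu(Y)$ forced by privacy. Your description of running isoperimetry on $N(S_1)$ does not match this and would need to be reworked even after the quantitative gap is filled.
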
 
The proof of this theorem requires three main ingredients that are interesting in their own right. For the first ingredient, we consider the even more general family of graphs that can be written as a direct product of (not necessarily balanced) complete multipartite graphs. In Section \ref{Near}, we prove the following theorem via a simple application of the polynomial method. Observe that this theorem both strengthens and generalizes Theorem \ref{Thm1}. 

\begin{theorem}\label{Thm3}
Let $G=\prod_{i=1}^n H_i$, where each graph $H_i$ is a complete multipartite graph. If $S\subseteq V(G)$ is an irredundant set, then there exist sets $\Lon(S),\Soc(S)\subseteq S$ such that $\Lon(S)\cap\Soc(S)=\emptyset$, $\Lon(S)\cup\Soc(S)=S$, $\Lon(S)$ is an independent set in $G$, and $|\Soc(S)|\leq 2^n$. In particular, $\IR(G)\leq\alpha(G)+2^n$. 
\end{theorem}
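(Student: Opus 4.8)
The plan is to take $\Lon(S)$ to be the set of vertices of $S$ that have no neighbor inside $S$, and to set $\Soc(S)=S\setminus\Lon(S)$. With this choice, $\Lon(S)\cap\Soc(S)=\emptyset$ and $\Lon(S)\cup\Soc(S)=S$ are immediate, and $\Lon(S)$ is independent essentially by definition, since any two distinct vertices of $\Lon(S)$ lie in $S$ and hence are nonadjacent. Everything then reduces to proving $|\Soc(S)|\le 2^n$. A convenient preliminary observation is that adjacency in $G$ depends only on coarse data: writing $c_i(g)$ for the index of the part of $H_i$ containing the $i$-th coordinate of a vertex $g\in V(G)$, two vertices $g,g'$ are adjacent in $G$ if and only if $c_i(g)\neq c_i(g')$ for every $i\in[n]$.

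Next I would extract, for each $v\in\Soc(S)$, a useful private neighbor. Since $v$ has a neighbor in $S$, we have $v\in N[S\setminus\{v\}]$, so irredundance of $S$ produces a vertex $w(v)\in N[S]\setminus N[S\setminus\{v\}]$; a short case analysis forces $w(v)\notin S$, $w(v)$ adjacent to $v$, and $w(v)$ nonadjacent to (and distinct from) every vertex of $S\setminus\{v\}$. In particular, for $u,v\in\Soc(S)$ this gives the crucial \emph{diagonal property}: $w(v)$ is adjacent to $u$ if and only if $u=v$.

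The core of the argument is then a short application of the polynomial method. For each $i\in[n]$, assign to the parts of $H_i$ pairwise distinct rationals $\lambda_{i,1},\lambda_{i,2},\ldots$. To each $v\in\Soc(S)$ associate the multilinear polynomial $Q_v(z_1,\ldots,z_n)=\prod_{i=1}^n\bigl(z_i-\lambda_{i,c_i(w(v))}\bigr)$ and, for $u\in\Soc(S)$, the point $p_u=\bigl(\lambda_{1,c_1(u)},\ldots,\lambda_{n,c_n(u)}\bigr)\in\mathbb{Q}^n$. Then $Q_v(p_u)=\prod_{i=1}^n\bigl(\lambda_{i,c_i(u)}-\lambda_{i,c_i(w(v))}\bigr)$ is nonzero exactly when $u$ is adjacent to $w(v)$, which by the diagonal property happens if and only if $u=v$. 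A standard evaluation argument — substitute the points $p_u$ into a vanishing linear combination $\sum_v a_v Q_v$ — then shows that $\{Q_v:v\in\Soc(S)\}$ is linearly independent; since these polynomials all lie in the span of the $2^n$ squarefree monomials $\prod_{i\in A}z_i$, $A\subseteq[n]$, we conclude $|\Soc(S)|\le 2^n$. Combining this with $|\Lon(S)|\le\alpha(G)$ gives $|S|\le\alpha(G)+2^n$, and taking the maximum over all irredundant sets $S$ yields $\IR(G)\le\alpha(G)+2^n$.

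I expect the only real content, as opposed to bookkeeping, to be choosing the encoding so that the relevant linear space has dimension exactly $2^n$: using the product form $\prod_i\bigl(z_i-\lambda_{i,c_i(w(v))}\bigr)$ keeps each $Q_v$ squarefree, which is what pins the bound at $2^n$ rather than something larger. Verifying that the private neighbor of a vertex of $\Soc(S)$ must be external, though routine, is the other place where care is needed, since it is precisely this externality that yields the clean diagonal property driving the linear-algebra step.
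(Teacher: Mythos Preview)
Your proposal is correct and follows essentially the same approach as the paper: define $\Lon(S)$ and $\Soc(S)$ exactly as you do, pick a private neighbor for each social vertex, and use the polynomial method with the multilinear polynomials $\prod_i(z_i-\lambda_{i,c_i(w(v))})$ evaluated at the points $(\lambda_{1,c_1(u)},\ldots,\lambda_{n,c_n(u)})$ to bound $|\Soc(S)|$ by the dimension $2^n$ of the space of squarefree monomials. The only cosmetic difference is that the paper takes $\lambda_{i,a}=a$ rather than introducing general distinct rationals, and it is slightly less explicit than you are about why the private neighbor must lie outside $S$.
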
 

The second ingredient in the proof of Theorem \ref{Thm7} involves determining an optimal isoperimetric inequality for direct products of complete multipartite graphs. Isoperimetric inequalities are ubiquitous in extremal combinatorics and graph theory \cite{Alon, Bobkov, Bollobas, Bollobas2, Bollobas3, Bollobas4, Carlson, Christofides, Chung, Harper, Harper2, Leader, Wang}. For every graph $G$ and every set $S\subseteq V(G)$, the \emph{vertex boundary} $\partial S$ is defined by \[\partial S=\{w\in V(G):\{v,w\}\in E(G)\text{ for some }v\in S\}.\] Note that $\partial S$ can include elements of $S$ itself, but it is also possible to have elements of $S$ that are not in $\partial S$. The \emph{vertex isoperimetric profile} of a graph $G$ with respect to a measure $\tau$ on $V(G)$ is the function $\Phi_\tau(G,\cdot):[0,1]\to[0,1]$ defined by \[\Phi_\tau(G,\nu)=\min\{\tau(\partial S):S\subseteq V(G), \tau(S)\geq \nu\}.\] If we do not specify the measure $\tau$, then we assume $\tau$ is the uniform measure $\mu$ by default. That is, $\Phi(G,\nu):=\Phi_\mu(G,\nu)$.

Brakensiek essentially gave a recursive formula for $\Phi(G,\nu)$ in the case where $G$ is a direct product of complete graphs that all have the same size \cite{Brakensiek}. It turns out that his proof method generalizes substantially. Our proof of the following theorem, given in Section \ref{Iso}, closely follows Brakensiek's argument, which comprises Appendix B of \cite{Brakensiek}. 

\begin{theorem}\label{Thm2}
Let $H_1,\ldots,H_n$ be complete multipartite graphs such that $\beta(H_1)\leq\cdots\leq\beta(H_n)$ and \[\prod_{k\in A}\frac{1-\beta(H_k)}{\beta(H_k)}\geq \frac{1-\beta(H_n)}{\beta(H_n)}\] for all nonempty $A\subseteq [n-1]$. We have \[\Phi(H_1,\nu)=\begin{cases} 0, & \mbox{if } \nu=0; \\ 1-\beta(H_1), & \mbox{if } 0<\nu\leq\beta(H_1); \\ 1, & \mbox{if } \beta(H_1)<\nu\leq 1. \end{cases}\] If $n\geq 2$, then \[\Phi(H_1\times\cdots\times H_n,\nu)=\begin{cases} 0, & \mbox{if } \nu=0; \\ (1-\beta(H_n))\Phi\left(H_1\times\cdots\times H_{n-1},\dfrac{\nu}{\beta(H_n)}\right), & \mbox{if } 0<\nu\leq\beta(H_n); \\ 1-\beta(H_n)+\beta(H_n)\Phi\left(H_1\times\cdots\times H_{n-1},\dfrac{\nu-\beta(H_n)}{1-\beta(H_n)}\right), & \mbox{if } \beta(H_n)<\nu\leq 1. \end{cases}\] 
\end{theorem}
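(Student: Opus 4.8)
The plan is to adapt the inductive scheme from Appendix~B of \cite{Brakensiek}: verify the case $n=1$ directly, and establish the recursion by slicing a subset of $V(H_1\times\cdots\times H_n)$ along the last factor $H_n$. For $n=1$, write $V(H_1)=V_1\cup\cdots\cup V_t$ with $|V_1|$ maximal, so that $\beta(H_1)=\mu(V_1)$; since a vertex of $V_j$ is adjacent in $H_1$ to exactly the vertices outside $V_j$, a nonempty set $S$ contained in one part $V_j$ has $\partial S=V(H_1)\setminus V_j$, whereas a nonempty $S$ meeting two or more parts has $\partial S=V(H_1)$. Minimizing $\mu(\partial S)$ subject to $\mu(S)\ge\nu$ therefore forces $S$ into the single largest part when $0<\nu\le\beta(H_1)$, giving the value $1-\beta(H_1)$, and cannot keep $\mu(\partial S)$ below $1$ once $\nu>\beta(H_1)$; together with the trivial case $\nu=0$, this is the stated three-case formula.

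For $n\ge2$, set $G'=H_1\times\cdots\times H_{n-1}$, let $W_1,\dots,W_t$ be the parts of $H_n$ labeled so that $p_1:=|W_1|/|V(H_n)|=\beta(H_n)$ is maximal, and put $p_j=|W_j|/|V(H_n)|$. Identify $V(G)=V(G')\times V(H_n)$, and for $S\subseteq V(G)$ and $h\in V(H_n)$ let $S_h=\{x\in V(G'):(x,h)\in S\}$ be the corresponding slice, with $\mu_{G'}$ denoting the uniform measure on $V(G')$. Because $h\in W_j$ is adjacent in $H_n$ to exactly the vertices outside $W_j$, the $h$-slice of $\partial S$ equals $\partial_{G'}(T_j)$, where $T_j:=\bigcup_{h'\notin W_j}S_{h'}$; hence
\[
\mu(S)=\frac{1}{|V(H_n)|}\sum_{h\in V(H_n)}\mu_{G'}(S_h),\qquad \mu(\partial S)=\sum_{j=1}^{t}p_j\,\mu_{G'}\!\bigl(\partial_{G'}T_j\bigr).
\]
A single compression simplifies the picture: for each $j$, replacing every slice indexed by $h\in W_j$ with the union $U_j:=\bigcup_{h\in W_j}S_h$ does not decrease $\mu(S)$ and leaves every $T_j$, hence $\mu(\partial S)$, unchanged, so we may assume the slices are constant on each part. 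The configuration is then a tuple $(U_1,\dots,U_t)$ of subsets of $V(G')$ with $\mu(S)=\sum_j p_j\,\mu_{G'}(U_j)$ and $T_j=\bigcup_{j'\ne j}U_{j'}$. Since $\mu_{G'}(\partial_{G'}T_j)\ge\Phi\bigl(G',\mu_{G'}(T_j)\bigr)$ by the definition of $\Phi$, and $\Phi(G',\cdot)$ is nondecreasing (being a minimum over a nested family of constraints), the inductive hypothesis, which furnishes the explicit formula for $\Phi(G',\cdot)$, reduces the lower bound to the numerical problem of minimizing $\sum_j p_j\,\Phi\bigl(G',\mu_{G'}(T_j)\bigr)$ over all such tuples subject to $\sum_j p_j\,\mu_{G'}(U_j)\ge\nu$.

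This optimization is the heart of the matter, and the step I expect to be the main obstacle, since the quantities $\mu_{G'}(T_j)$ are coupled through the shared sets $U_{j'}$ and $\Phi(G',\cdot)$ is itself a layered piecewise-linear function. Two constructions are the natural candidates for the minimizer: ``expanding along $H_n$'', i.e.\ concentrating all of $S$ in the slices over $W_1$ (so $T_1=\emptyset$ and $T_j=U_1$ for $j\ne1$, whence $\mu(\partial S)=(1-\beta(H_n))\,\mu_{G'}(\partial_{G'}U_1)$, which with $U_1$ chosen optimally gives the middle case); and ``filling, then expanding'', i.e.\ taking $U_1=V(G')$ and a common set $W$ over the remaining parts (so $T_1=W$ and $T_j=V(G')$ for $j\ne1$, whence $\mu(\partial S)=1-\beta(H_n)+\beta(H_n)\,\mu_{G'}(\partial_{G'}W)$, which with $W$ chosen optimally gives the last case). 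Unwinding $\Phi(G',\cdot)$ through its own recursion shows that verifying the optimality of these two amounts to checking that neither is beaten by a strategy which instead expands along some nonempty sub-collection $\{H_k:k\in A\}$ of the first $n-1$ factors, and the hypothesis $\prod_{k\in A}\frac{1-\beta(H_k)}{\beta(H_k)}\ge\frac{1-\beta(H_n)}{\beta(H_n)}$ for all nonempty $A\subseteq[n-1]$ is precisely the inequality that rules out every such competitor. These two constructions also realize the two claimed right-hand sides, so they simultaneously provide the matching upper bound, completing the induction. (In the paper's main application, the balanced case $G=\prod_iK[u_i,t_i]$ with $t_1\ge\cdots\ge t_n\ge2$, one has $\beta(H_k)=1/t_k\le1/2$ for every $k$, so the hypothesis holds automatically and is inherited by every subproduct; in general one must carry a slightly more robust statement through the induction, since the hypothesis need not pass to subproducts.)
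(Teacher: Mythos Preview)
Your slicing decomposition and the identity $\mu(\partial S)=\sum_j p_j\,\mu_{G'}(\partial_{G'}T_j)$ are correct, and your two candidate constructions are exactly the right ones --- they realize the two nontrivial cases of the recursion and supply the matching upper bound. The gap is precisely where you locate it, but you have not closed it: the sentence beginning ``Unwinding $\Phi(G',\cdot)$ through its own recursion shows\dots'' is a hope, not an argument. Two concrete obstructions. First, after replacing each $\mu_{G'}(\partial_{G'}T_j)$ by $\Phi\bigl(G',\mu_{G'}(T_j)\bigr)$ you are \emph{not} left with a numerical problem: the quantity $\mu_{G'}(T_j)=\mu_{G'}\bigl(\bigcup_{j'\ne j}U_{j'}\bigr)$ depends on the overlap structure of the sets $U_{j'}$, not just on their measures, so you are still optimizing over tuples of subsets of $V(G')$. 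Second, you lean on ``the inductive hypothesis, which furnishes the explicit formula for $\Phi(G',\cdot)$,'' but the theorem asserts only a one-step recursion, not a closed form, and --- as you yourself note --- the product hypothesis on the $\beta(H_k)$ need not descend to $H_1,\dots,H_{n-1}$, so you cannot invoke the theorem inductively on $G'$ in general.

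The paper sidesteps both issues by arguing structurally rather than analytically. It first collapses each $H_i$ to $K_{t_i}$, pushing $\mu$ forward to a product measure $\rho$ on $\prod_i[t_i]$; it then applies Brakensiek's coordinate compressions $c_i$ to the optimizer, and finally a family of \emph{folding} operators $\fold_A$, $A\subseteq[n-1]$, each of which swaps certain fibres of the projection $\Pi:\prod_i[t_i]\to\{0,1\}^n$. The product hypothesis is used exactly once, to verify that every $\fold_A$ weakly increases $\rho(S)$ and weakly decreases $\rho(\partial S)$. The output is Proposition~\ref{Prop1}: one may take the optimizer $S'$ to satisfy either $S'\subseteq J_{1,n}$ or $J_{1,n}\subseteq S'$, which are precisely your two candidates, and the recursion then drops out in a few lines. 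No induction on $n$ is used --- the one-step recursion is established directly at each level --- so the question of the hypothesis descending to subproducts never arises. If you want to push your slicing approach through, the missing ingredient is an argument that the tuple $(U_1,\dots,U_t)$ may be taken nested; that is essentially what the compressions and foldings accomplish.
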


\begin{remark}\label{Rem1}
The hypothesis in Theorem \ref{Thm2} that $\displaystyle\prod_{k\in A}\frac{1-\beta(H_k)}{\beta(H_k)}\geq \frac{1-\beta(H_n)}{\beta(H_n)}$ for all nonempty $A\subseteq [n-1]$ is not a huge restriction. For example, this condition is satisfied if $\beta(H_i)\leq 1/2$ for all $i\in[n-2]$. In particular, it holds whenever the complete multipartite graphs $H_i$ are balanced.  
\end{remark}

We also prove the following useful corollary in Section \ref{Iso}. 
\begin{corollary}\label{Cor1}
If $H_1,\ldots,H_n$ are complete multipartite graphs with $\beta(H_1)\leq\cdots\leq\beta(H_n)\leq 1/2$, then \[\Phi(H_1\times\cdots\times H_n,\nu)\geq \nu^{\log_{\beta(H_n)}(1-\beta(H_n))}.\] 
\end{corollary}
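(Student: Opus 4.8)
The plan is to derive Corollary \ref{Cor1} directly from the recursive formula in Theorem \ref{Thm2} by a straightforward induction on $n$. First I would check the hypothesis of Theorem \ref{Thm2}: since $\beta(H_k)\leq 1/2$ for every $k\in[n-1]$, each factor $\frac{1-\beta(H_k)}{\beta(H_k)}$ is at least $1$, and since $\beta(H_n)\leq 1/2$ the right-hand side $\frac{1-\beta(H_n)}{\beta(H_n)}$ is also at least... wait, that is not automatic, so the cleaner route is Remark \ref{Rem1}: the condition $\beta(H_i)\leq 1/2$ for $i\in[n-2]$ (which certainly holds here) guarantees the product hypothesis, so Theorem \ref{Thm2} applies.

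Set $c=\log_{\beta(H_n)}(1-\beta(H_n))$, and note $0<c\leq 1$ because $\beta(H_n)\leq 1/2$ forces $1-\beta(H_n)\geq\beta(H_n)$, hence (as $\beta(H_n)<1$) $\log_{\beta(H_n)}(1-\beta(H_n))\leq 1$; also $1-\beta(H_n)<1$ gives $c>0$. The base case $n=1$: for $0<\nu\leq\beta(H_1)$ we have $\Phi(H_1,\nu)=1-\beta(H_1)\geq \beta(H_1)\geq\nu\geq\nu^{c'}$ where $c'=\log_{\beta(H_1)}(1-\beta(H_1))$ (using $c'\leq 1$ and $\nu\leq 1$), and for $\beta(H_1)<\nu\leq 1$ we have $\Phi(H_1,\nu)=1\geq\nu^{c'}$; the $\nu=0$ case is trivial. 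For the inductive step I would split into the two nontrivial regimes of the recursion. When $\beta(H_n)<\nu\leq 1$, the recursive formula gives $\Phi\geq 1-\beta(H_n)$, and since $\nu\leq 1$ and $\beta(H_n)=(1-\beta(H_n))^{1/c}$ one checks $\nu^c\leq 1 = $ ... more precisely $\nu^c\leq 1$ and we need $\nu^c\leq 1-\beta(H_n)$; but $\nu$ can be close to $1$, so instead I use that $\Phi\geq 1-\beta(H_n)+\beta(H_n)\cdot 0 = 1-\beta(H_n)$ together with the fact that the function $\nu\mapsto \nu^c$ is at most $1$; the inequality $\nu^c\le 1-\beta(H_n)$ fails near $\nu=1$, so the right comparison is actually to keep the full recursive expression and apply the inductive hypothesis to the inner term with argument $\frac{\nu-\beta(H_n)}{1-\beta(H_n)}$.

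So more carefully: in the regime $\beta(H_n)<\nu\le 1$, by induction $\Phi(H_1\times\cdots\times H_{n-1},x)\ge x^{c}$ with $x=\frac{\nu-\beta(H_n)}{1-\beta(H_n)}$ (here I must note the inner product $H_1\times\cdots\times H_{n-1}$ still satisfies the hypotheses, and that the inner exponent $\log_{\beta(H_{n-1})}(1-\beta(H_{n-1}))$ may differ from $c$; to handle this one either proves the sharper statement with the exponent indexed by the last factor at each stage, or simply observes $x^{c_{\mathrm{inner}}}\ge x^{c}$ is false in general — so the correct formulation is to carry the exponent $\log_{\beta(H_n)}(1-\beta(H_n))$ throughout by noting it is the \emph{smallest} relevant exponent, since $\beta(H_n)$ is the largest ratio). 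It then suffices to verify the scalar inequality $1-\beta(H_n)+\beta(H_n)\,x^{c}\ge \big((1-\beta(H_n))x+\beta(H_n)\big)^{c}$ for $x\in[0,1]$, which follows from concavity of $s\mapsto s^{c}$ (Jensen), since $\nu=(1-\beta(H_n))x+\beta(H_n)$. Similarly, in the regime $0<\nu\le\beta(H_n)$, induction gives $\Phi\ge (1-\beta(H_n))\big(\tfrac{\nu}{\beta(H_n)}\big)^{c}$, and using $\beta(H_n)^{c}=1-\beta(H_n)$ this equals exactly $\nu^{c}$. The main obstacle is the bookkeeping around which exponent to use: one needs the observation that $\beta(H_n)$ being the largest independence ratio makes $c=\log_{\beta(H_n)}(1-\beta(H_n))$ the uniformly safe (smallest) choice at every level of the recursion, so that the inductive hypothesis can be invoked with the same $c$; once that is pinned down, the remaining work is just the two convexity/Jensen estimates above.
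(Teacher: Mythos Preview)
Your overall plan is exactly the paper's: invoke Remark \ref{Rem1}, induct on $n$ via the recursion in Theorem \ref{Thm2}, and in each regime reduce to a scalar inequality. The first regime $0<\nu\le\beta(H_n)$ is fine and matches the paper verbatim: induction gives the bound with exponent $c_{n-1}=\log_{\beta(H_{n-1})}(1-\beta(H_{n-1}))$, you weaken to $c=c_n$, and then $(1-\beta(H_n))(\nu/\beta(H_n))^c=\nu^c$ since $\beta(H_n)^c=1-\beta(H_n)$. One small correction on your exponent bookkeeping: the map $\beta\mapsto\log_\beta(1-\beta)$ is \emph{increasing} on $(0,1/2]$, so $c_n$ is the \emph{largest} of the $c_k$, not the smallest. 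That is what you need, since for $x\in[0,1]$ the inductive bound $x^{c_{n-1}}$ dominates $x^{c_n}$ precisely when $c_{n-1}\le c_n$.

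The real gap is in the second regime. You reduce correctly to the scalar inequality
\[
(1-\beta)+\beta\,y^{c}\ \ge\ \bigl((1-\beta)y+\beta\bigr)^{c},\qquad y\in[0,1],\ \beta=\beta(H_n),
\]
but this does \emph{not} follow from Jensen for $s\mapsto s^c$. Jensen (concavity) gives $\bigl((1-\beta)y+\beta\bigr)^{c}\ge (1-\beta)y^{c}+\beta$, which is a \emph{lower} bound on the right-hand side, the wrong direction; and the weights on your left-hand side are swapped relative to what Jensen would produce. Moreover the difference $g(y)=(1-\beta)+\beta y^c-\bigl((1-\beta)y+\beta\bigr)^c$ is neither concave nor convex on $[0,1]$ (check $g''$ changes sign), so a direct endpoint argument on $g$ also fails. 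The paper's fix is to divide the target inequality $\beta^c+\beta\bigl(\tfrac{\nu-\beta}{1-\beta}\bigr)^c\ge\nu^c$ through by $\nu^c$ and substitute $x=\beta/\nu$, obtaining
\[
x^{c}+\beta\Bigl(\tfrac{1-x}{1-\beta}\Bigr)^{c}\ \ge\ 1,\qquad \beta\le x\le 1.
\]
Now the left-hand side is a \emph{sum} of two concave functions of $x$, hence concave, and it equals $1$ at both endpoints $x=\beta$ and $x=1$; concavity then gives the inequality on the whole interval. So your outline is right, but the ``Jensen'' step needs to be replaced by this division-and-concavity manoeuvre.
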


The final ingredient needed in the proof of Theorem \ref{Thm7} is a result concerning stability of independent sets in direct products of complete multipartite graphs. One of the first instances of such a result is due to the first author, Dinur, Friedgut, and Sudakov \cite{Alon2} and concerns graphs of the form $K_t^n$ (the direct product of $n$ copies of the complete graph $K_t$). They show that the maximum-sized independent sets in such a graph are precisely the sets of vertices obtained by fixing one of the coordinates of the vertices to be constant. Furthermore, they show that every independent set whose size is almost maximal must be close to one of these maximum-sized independent sets. More precisely, they prove the following.  

\begin{theorem}[\!\!\cite{Alon2}]\label{Thm4}
For each integer $t\geq 3$, there exists a constant $M(t)$ with the following property. If $I\subseteq V(K_t^n)$ is an independent set with $\mu(I)=\dfrac{1}{t}(1-\varepsilon)$, then there exists a maximum-sized independent set $J$ such that $\mu(I\Delta J)\leq M(t)\varepsilon$, where $I\Delta J=(I\setminus J)\cup(J\setminus I)$. 
\end{theorem}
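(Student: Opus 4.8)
The plan is to prove this by a spectral / Fourier-analytic argument on the graph $K_t^n$, combined with a combinatorial boosting step that upgrades an approximate structural statement to the claimed linear-in-$\varepsilon$ bound. First I would record the spectrum of $K_t$: its adjacency matrix has eigenvalue $t-1$ (once) and eigenvalue $-1$ (with multiplicity $t-1$). Since $K_t^n$ is a direct product, its adjacency matrix is the tensor power, so its eigenvalues are products $\prod_{i=1}^n\lambda_i$ with each $\lambda_i\in\{t-1,-1\}$; normalizing by the degree $(t-1)^n$, the eigenvalues of the normalized adjacency operator are $\prod_{i\in B}\left(-\tfrac{1}{t-1}\right)$ over subsets $B\subseteq[n]$, i.e. they lie in $\{\pm(t-1)^{-|B|}\}$. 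The key point is the Hoffman-type ratio bound: an independent set $I$ in a $d$-regular graph with smallest normalized eigenvalue $-\rho$ satisfies $\mu(I)\le \rho/(1+\rho)$, and here the relevant $\rho=1/(t-1)$ gives exactly $\mu(I)\le 1/t$, matching $\beta(K_t^n)=1/t$. Equality analysis in the Hoffman bound forces the indicator $\mathbf 1_I$, after subtracting its mean, to be supported on the $(-1/(t-1))$-eigenspace, which is spanned by the "dictator" functions depending on a single coordinate; this is what pins down the extremal independent sets.

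For the stability version, I would write $f=\mathbf 1_I$ and expand it in the eigenbasis of $K_t^n$ indexed by subsets $B\subseteq[n]$ (an Efron–Stein / Fourier-on-$\mathbb Z_t^n$-type decomposition adapted to the $K_t$ structure), $f=\sum_B f_{=B}$ with $\|f\|_2^2=\sum_B\|f_{=B}\|_2^2$ and $\mathbb E f=\|f_{=\emptyset}\|_2$-type identity. The independence condition $\langle f, A f\rangle=0$ (where $A$ is the normalized adjacency operator) combined with $\mathbb E f=\tfrac1t(1-\varepsilon)$ yields, after collecting terms, an inequality of the form $\sum_{|B|\ge 2}c_B\|f_{=B}\|_2^2\le (\text{const})\,\varepsilon$ for positive constants $c_B$ bounded below, because the eigenvalue $-1/(t-1)$ on level $|B|=1$ is strictly the most negative among levels $|B|\ge 1$ and so the "non-dictator, non-constant" weight is penalized. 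Hence $\sum_{|B|\ge 2}\|f_{=B}\|_2^2=O_t(\varepsilon)$, meaning $f$ is $O_t(\varepsilon)$-close in $L^2$ to a function $g=\mathbb E f+\sum_{i=1}^n g_{=\{i\}}$ of degree at most $1$.

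The final and most delicate step is to pass from "$f$ is $L^2$-close to a degree-$\le 1$ function" to "$I$ is close to a genuine dictator independent set $J=\{x:x_{i_0}=a\}$." Here I would argue that a $\{0,1\}$-valued function that is $L^2$-close to a degree-$1$ function must in fact be close to a dictator indicator: the degree-$1$ part $g$ is a sum of functions on single coordinates, and Boolean-ness forces almost all of the level-$1$ mass to concentrate on a single coordinate $i_0$ (otherwise one produces, by a junta/FKN-type argument, two independent-ish coordinate influences that are incompatible with $f\in\{0,1\}$ having mean near $1/t$), and then on that coordinate the indicator must be close to $\mathbf 1\{x_{i_0}=a\}$ for some symbol $a$. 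I expect this junta-extraction step — an FKN-type theorem over $\mathbb Z_t$ with the tighter-than-generic constant needed to keep the loss linear in $\varepsilon$ (which is why the hypothesis $t\ge 3$ and the appearance of an unspecified constant $M(t)$ are natural) — to be the main obstacle; it is exactly the part where one must exploit that $I$ is an honest independent set and not merely a low-degree Boolean function, using the independence constraint a second time to rule out "spread-out" near-extremizers. The remaining bookkeeping, translating an $L^2$ bound $\|f-\mathbf 1_J\|_2^2=O_t(\varepsilon)$ into $\mu(I\Delta J)=O_t(\varepsilon)$, is immediate since $f-\mathbf 1_J$ is $\{-1,0,1\}$-valued so its squared $L^2$ norm equals $\mu(I\Delta J)$.
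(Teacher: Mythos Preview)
The paper does not contain a proof of this theorem: Theorem~\ref{Thm4} is stated with the citation \cite{Alon2} and is used only as background to motivate Brakensiek's sharper Theorem~\ref{Thm5} and the paper's own Theorem~\ref{Thm6}. There is therefore nothing in the paper to compare your argument against.

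That said, your sketch is essentially the strategy of the original paper of Alon, Dinur, Friedgut, and Sudakov: compute the spectrum of $K_t^n$ as a tensor power, use the Hoffman bound to see that near-extremal independent sets have their Fourier mass concentrated on the constant plus the level-$1$ (``dictator'') eigenspace, and then invoke an FKN-type structure theorem to conclude that a Boolean function with almost all its weight on levels $\leq 1$ must be close to a genuine dictator $J_{a,j}$. Your identification of the junta-extraction / FKN step as the crux is accurate; in \cite{Alon2} this is handled via a Fourier argument specific to $(\mathbb{Z}/t\mathbb{Z})^n$, and the unspecified constant $M(t)$ absorbs the losses from that step. Note, however, that the present paper's approach to the analogous (and sharper) stability statements in Theorems~\ref{Thm5} and~\ref{Thm6} is entirely different: it is purely combinatorial, based on compressions, folding, and the vertex isoperimetric inequality of Theorem~\ref{Thm2}, rather than spectral or Fourier-analytic.
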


Ghandehari and Hatami \cite{Ghandehari} improved upon Theorem \ref{Thm4} and made it explicit by showing that if $t\geq 20$ and $\varepsilon\leq 10^{-9}$, then one can take $M(t)=40/t$. Brakensiek greatly improved upon these results with the following theorem. 

\begin{theorem}[\!\!\!\cite{Brakensiek}]\label{Thm5}
Let $t\geq 3$ be an integer. If $I\subseteq V(K_t^n)$ is an independent set with $\mu(I)=\dfrac{1}{t}(1-\varepsilon)>\dfrac{3t-2}{t^3}$, then there exists a maximum-sized independent set $J$ such that \[\mu(I\setminus J)\leq 4\varepsilon^{\log t/\log(t/(t-1))}.\] 
\end{theorem}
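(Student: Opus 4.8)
The plan is to deduce everything from the isoperimetric inequality. Write $\rho=\log t/\log\!\big(t/(t-1)\big)$, so that $\rho$ is the claimed exponent; since $\beta(K_t)=1/t$, Corollary \ref{Cor1} gives the clean bound $\Phi(K_t^m,\nu)\ge\nu^{1/\rho}$ for every $m$ and every $\nu\in[0,1]$. First I would extract the complementary \emph{near-saturation} estimate $\Phi(K_t^m,1-s)\ge 1-C_ts^{\rho}$ from the recursion in Theorem \ref{Thm2}: peeling off $k$ applications of the branch $\beta(K_t)<\nu\le 1$ replaces the deficit $s$ by $s\cdot\tfrac{t}{t-1}$ while picking up a factor $1/t$ in front of the residual profile, so after $k\approx\log_{t/(t-1)}(1/s)$ steps one is left with a deficit at most $t^{-k}\approx s^{\rho}$. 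With these two facts in hand I would argue by induction on $n$ (the base case being routine, since for $n\le 2$ every independent set is contained in a dictator).

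For the inductive step, slice $I$ along the last coordinate into $I_1,\dots,I_t\subseteq V(K_t^{n-1})$ with $I_a=\{x'\in V(K_t^{n-1}):(x',a)\in I\}$. A coordinate slice of $K_t^n$ is edgeless, so $I$ is independent exactly when the $I_a$ are \emph{pairwise cross-independent}: $I_b\subseteq V(K_t^{n-1})\setminus\partial I_a$ whenever $a\ne b$. Let $m=\max_a\mu(I_a)$. Combining cross-independence with the maximizing slice and $\Phi(K_t^{n-1},m)\ge m^{1/\rho}$ yields $1-\varepsilon=\sum_a\mu(I_a)\le m+(t-1)\big(1-m^{1/\rho}\big)$. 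The function $g(x)=(t-1)x^{1/\rho}-x$ has $g(1/t)=g(1)=t-2$, increases and then decreases on $[0,1]$, and (using $(t-1)^2/\rho>1$ for $t\ge 3$) attains its maximum strictly to the right of $1/t$. Hence, for $\varepsilon$ below the stated threshold, $g(m)\le t-2+\varepsilon$ splits into two cases: \emph{one slice dominates}, $m\ge 1-c_t\varepsilon$, or \emph{the slices are balanced}, $m\le 1/t+c_t\varepsilon$ (so in fact $\mu(I_a)=\tfrac1t\pm O_t(\varepsilon)$ for all $a$).

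In the dominant case, let $a_0$ realize $m$ and put $s=1-\mu(I_{a_0})$. Cross-independence of the other slices with $I_{a_0}$ and near-saturation give $\sum_{b\ne a_0}\mu(I_b)\le(t-1)\big(1-\Phi(K_t^{n-1},1-s)\big)\le(t-1)C_ts^{\rho}$; but this sum equals $(1-\varepsilon)-\mu(I_{a_0})=s-\varepsilon$, so $s-\varepsilon\le(t-1)C_ts^{\rho}$, and as $\rho>1$ the small solutions satisfy $s\le\varepsilon(1+o(1))$. Therefore $\mu\!\big(I\setminus\{x_n=a_0\}\big)=\tfrac1t\sum_{b\ne a_0}\mu(I_b)=\tfrac1t(s-\varepsilon)=O_t(\varepsilon^{\rho})$, which the constraint on $\varepsilon$ makes at most $4\varepsilon^{\rho}$, with $J=\{x_n=a_0\}$. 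In the balanced case, $I_1\cap I_2$ is independent in $K_t^{n-1}$ (an edge there between two of its vertices would give an edge of $I$ between the first two slices), and one checks via cross-independence and near-saturation that $\mu(I_1\cap I_2)=\tfrac1t(1-\varepsilon')$ with $\varepsilon'=O_t(\varepsilon)$. The inductive hypothesis produces a dictator $J'=\{x_i=c\}$ of $K_t^{n-1}$ with $\mu\big((I_1\cap I_2)\setminus J'\big)\le 4(\varepsilon')^{\rho}$, and comparing measures shows that $J=\{x_i=c\}$ in $K_t^n$ satisfies the \emph{weak} bound $\mu(I\setminus J)=O_t(\varepsilon)$.

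It remains to upgrade this weak bound — and this is also where the exponent $\rho$ materializes. Set $A=I\setminus J$ and $R=J\setminus I$, so automatically $\mu(R)=\tfrac{\varepsilon}{t}+\mu(A)$. Since $I$ is independent, $A$ sends no edge into $I\cap J=J\setminus R$; thus each vertex of $A$ lies outside $J$ with all of its $J$-neighbors inside $R$. Deleting coordinate $i$ identifies this set with $([t]\setminus\{c\})\times B'$, where $B'=\{w'\in V(K_t^{n-1}):N(w')\subseteq R'\}$ and $R'$ is the image of $R$, with $\mu(R')=t\mu(R)=\varepsilon+t\mu(A)$. Now $\partial B'=N(B')\subseteq R'$, so Corollary \ref{Cor1} gives $\mu(B')^{1/\rho}\le\mu(\partial B')\le\mu(R')$, whence \[\mu(A)\le\tfrac{t-1}{t}\,\mu(B')\le\tfrac{t-1}{t}\big(\varepsilon+t\mu(A)\big)^{\rho}.\] Substituting the weak bound makes the right-hand side $\tfrac{t-1}{t}\varepsilon^{\rho}(1+o(1))$ (iterating the inequality once more if the $o(1)$ is not yet small enough), which is below $4\varepsilon^{\rho}$ for $\varepsilon$ under the hypothesis. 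The hard part will be the balanced case: the slices $I_a$ are \emph{not} independent in $K_t^{n-1}$, so the inductive hypothesis cannot be fed them directly, and one must instead locate an honest independent set of measure close to $1/t$ (such as $I_1\cap I_2$, or the set of points lying in at least two slices) while controlling how much of $I$ it misses; squeezing that loss — and the constants in the final inequality, which is raised to the large power $\rho$ — down to the sharp constant $4$ over the whole range $\mu(I)>\frac{3t-2}{t^3}$ is the delicate point, and is presumably why the hypothesis fixes that precise threshold.
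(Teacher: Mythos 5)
The paper does not reprove Theorem \ref{Thm5} itself---it is cited from Brakensiek---but it does prove the generalization Theorem \ref{Thm6} by explicitly following Brakensiek's argument, so that is the natural comparison. Your sketch shares the two basic engines of that proof: the isoperimetric bound $\Phi(K_t^m,\nu)\geq\nu^{1/\rho}$ from Corollary~\ref{Cor1}, and slicing $I$ along a coordinate into pairwise cross-independent layers $I_1,\dots,I_t$. Your closing self-improving inequality $\mu(A)\leq\tfrac{t-1}{t}(\varepsilon+t\mu(A))^\rho$ is also exactly the right shape and plays the role that Proposition~\ref{Prop2} and the chain \eqref{Eq29}--\eqref{Eq43} play in the paper. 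Where your route genuinely diverges is that it never invokes the compressions $c_i$ of Definition~\ref{Def1}. In the paper (as in Brakensiek), Lemmas \ref{Lem1}--\ref{Lem3} allow one to pass to a compressed independent set at no cost, and compression makes the layers \emph{nested}, $I_1\supseteq I_2\supseteq\cdots\supseteq I_t$. Nesting is precisely what makes the hard case tractable: then $I_2=I_1\cap I_2$ is automatically independent and at least as large as the average of $I_2,\dots,I_t$, so the proof of Lemma~\ref{Lem5} can splice $I_1$ and $I_2$ into an intermediate independent set $\widehat I$ and recurse.

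Without compression, your ``balanced case'' does not close, and you flag this yourself. Two things are missing. First, the lower bound on $\mu(I_1\cap I_2)$: inclusion--exclusion against $\mu(I_1)+\mu(I_2)-1$ is vacuous for $t\geq3$, and the correct argument needs a \emph{third} slice: cross-independence puts both $I_1,I_2\subseteq V\setminus\partial I_3$, and Corollary~\ref{Cor1} forces $\mu(V\setminus\partial I_3)\leq 1/t+O_t(\varepsilon)$, so $I_1$ and $I_2$ nearly fill the same small set and therefore nearly coincide. Second, and more seriously, once induction hands you a dictator $J'$ hugging $I_1\cap I_2$, you still need to show that the sacrificed slice $I_3$ (and $I_4,\dots,I_t$) hug the same $J'$, and then that everything collapses to the constant $4$ over the full range $\mu(I)>(3t-2)/t^3$. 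Your bootstrapping step has the analogous problem: feeding a weak bound $\mu(A)\leq C_t\varepsilon$ into $\mu(A)\leq\tfrac{t-1}{t}(\varepsilon+t\mu(A))^\rho$ yields $\mu(A)\leq\tfrac{t-1}{t}(1+tC_t)^\rho\varepsilon^\rho$, and since $\rho\sim t\log t$ is large, the prefactor $(1+tC_t)^\rho$ blows up unless $C_t$ is already quantitatively small; ``iterating once more'' only helps once the first iterate is below $\varepsilon/t$, which the sketch does not establish on the stated range of $\varepsilon$. This constant-chasing is exactly the bookkeeping carried out by the paper's Lemmas \ref{Lem4}, \ref{Lem5}, \ref{Lem6}, and \ref{Lem7}, and it is the part your proposal defers.
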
 

In order to prove Theorem \ref{Thm7}, we will need to generalize Theorem \ref{Thm5} so that it applies to direct products of balanced complete multipartite graphs that might be of different sizes. First, we fix some notation. If $G=\prod_{i=1}^nH_i$, where the graphs $H_i$ are complete multipartite graphs, we let $X_i(1),\ldots,X_i(t_i)$ be the partite sets of $H_i$. Let \[J_{a,j}=\{(x_1,\ldots,x_n)\in V(G):x_j\in X_j(a)\}.\] Let \[\eta(t)=\frac{\log t}{\log\left(\frac{t}{t-1}\right)}=t\log t+\Theta(\log t)\] and \[\omega(t)=\begin{cases} 37/81-(1/2)(5/81)^{1/\eta(3)}\approx 0.2779, & \mbox{if } t=3; \\ 85/256-(1/3)(7/256)^{1/\eta(4)}\approx 0.1741, & \mbox{if } t=4; \\ \dfrac{4t-3}{t^3} & \mbox{if } t\geq 5 \end{cases}\] for all integers $t\geq 3$. 

\begin{theorem}\label{Thm6}
Let $G=\prod_{i=1}^nK[u_i,t_i]$, where $t_1\geq\cdots\geq t_n\geq 3$. Let $I\subseteq V(G)$ be an independent set with $\mu(I)=\dfrac{1}{t_n}(1-\varepsilon)>\omega(t_n)$. There exist $j\in[n]$ and $a\in[t_j]$ such that \[t_j<\dfrac{t_n}{1-\varepsilon}\quad\text{and}\quad\mu(I\setminus J_{a,j})<4\varepsilon^{\eta(t_n)}.\]  
\end{theorem}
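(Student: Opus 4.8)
The plan is to follow Brakensiek's strategy for Theorem \ref{Thm5}, using the generalized isoperimetric inequality of Corollary \ref{Cor1} as the engine and inducting on $n$. First I would dispose of the base case: when $n=1$ the graph is just $K[u_1,t_1]=K[u_n,t_n]$ (since $t_1=t_n$ is forced), whose independent sets of measure exceeding $\omega(t_n)\cdot$(appropriate scaling) are contained in a single partite set, so we may take $j=1$ and let $a$ be that partite set. For the inductive step, fix the coordinate $n$ and let $u=u_n$, $t=t_n$. Writing $V(G)=V(G')\times V(K[u,t])$ where $G'=\prod_{i=1}^{n-1}K[u_i,t_i]$, I would slice $I$ along the last coordinate. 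Group the $t$ partite sets $X_n(1),\dots,X_n(t)$ of $K[u,t]$ and, for a vertex $y$ in partite set $X_n(b)$, let $I_y\subseteq V(G')$ be the corresponding slice; since $I$ is independent, whenever $y,y'$ lie in different partite sets of $K[u,t]$ the slices $I_y$ and $I_{y'}$ must satisfy: no vertex of $I_{y'}$ lies in $\partial I_y$ (in $G'$). Equivalently, $I_{y'}\subseteq V(G')\setminus \partial I_y$, so $\mu(I_{y'})\le 1-\Phi(G',\mu(I_y))$, and symmetrically.

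The core of the argument is then a two-case analysis mirroring Brakensiek's. Let $\delta_b$ be the average measure (within $G'$) of the slices over partite set $X_n(b)$, ordered so that $\delta_1\ge\delta_2\ge\cdots\ge\delta_t$. In the \textbf{concentrated case}, $\delta_1$ is close to $1$ — this is where one coordinate is ``essentially constant'' and the stability conclusion comes from the large slice being forced, via the isoperimetric bound, to dominate; here I would show the slices over $X_n(2),\dots,X_n(t)$ are tiny (since they avoid $\partial I_y$ for $y$ in $X_n(1)$, and large $I_y$ forces $\partial I_y$ to be nearly everything by Corollary \ref{Cor1}) and then recurse into $G'$ on the big slice, pulling out the coordinate $j\in[n-1]$ and partite index $a$ and checking the two numerical conclusions $t_j<t_n/(1-\varepsilon)$ and $\mu(I\setminus J_{a,j})<4\varepsilon^{\eta(t_n)}$. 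In the \textbf{spread case}, several $\delta_b$ are bounded away from both $0$ and $1$; then I would use convexity of the map $\nu\mapsto 1-\Phi(G',\nu)$ together with the inequality $\mu(I_{y'})\le 1-\Phi(G',\mu(I_y))$ summed over pairs of partite sets to bound $\mu(I)=\frac1t\sum_b \delta_b$ strictly below $\frac1t(1-\varepsilon)$, contradicting the hypothesis $\mu(I)>\omega(t_n)$. The threshold $\omega(t)$ — with its curious special values at $t=3,4$ — is exactly what is needed to make this contradiction close; for $t\ge 5$ the cruder bound $\omega(t)=(4t-3)/t^3$ suffices, while $t=3,4$ require the sharper constants obtained by plugging the explicit form of $\Phi(K[u,t],\cdot)$ into the one-step recursion.

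The exponent bookkeeping is where care is needed: Corollary \ref{Cor1} gives $\Phi(G',\nu)\ge \nu^{\log_{\beta'}(1-\beta')}$ where $\beta'=\beta(H'_{\max})=1/t_{n-1}$, and I would need $\eta(t_n)\le \log_{\beta(K[u_j,t_j])}(1-\beta(K[u_j,t_j]))=\eta(t_j)$ for the relevant $j$, which holds because $t_j\le t_n$ makes $\eta$ ... wait, $\eta$ is increasing, so $t_j\le t_n$ gives $\eta(t_j)\le\eta(t_n)$, the inequality I want for turning a bound with exponent $\eta(t_j)$ into one with exponent $\eta(t_n)$ when the base is $\le 1$. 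Propagating the constant $4$ through the recursion requires that the ``loss'' at each level be absorbed, which is precisely the content of the condition $\mu(I)>\omega(t_n)$ forcing $\varepsilon$ small enough that $\varepsilon^{\eta(t_n)}$ dominates the lower-order error terms.

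\textbf{Main obstacle.} The hard part will be the spread case: controlling $\mu(I)$ from above when the slices are genuinely distributed across several partite sets, using only the convexity of $1-\Phi(G',\cdot)$ and the pairwise constraints, and extracting from this a bound sharp enough to contradict $\mu(I)>\omega(t_n)$ with the exact constants claimed for $t_n\in\{3,4\}$. This is where Brakensiek's argument is most delicate, and generalizing it from $K_t^n$ to $\prod K[u_i,t_i]$ with varying $t_i$ means the isoperimetric profile being fed into the recursion is no longer self-similar, so the convexity estimate must be done against the genuinely different function $\Phi(G',\cdot)$ at each level; the saving grace is that Theorem \ref{Thm2} gives this profile a clean recursive closed form, and Corollary \ref{Cor1} gives a uniform power-law lower bound that is exactly matched to the target exponent $\eta(t_n)$.
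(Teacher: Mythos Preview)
Your sketch diverges substantially from the paper's argument, and the divergence is where the real gap lies. The paper does \emph{not} run a direct slice-and-recurse induction along the last coordinate. Instead it first collapses $\prod K[u_i,t_i]$ to $\prod K_{t_i}$ via the map $\coll$ (so the multipartite structure disappears immediately), and then---crucially---uses \emph{compressions} (the operators $c_i$ of Definition~\ref{Def1}) to force the independent set into a down-closed shape before doing any structural analysis. The dichotomy you call concentrated/spread is realized in the paper as Lemma~\ref{Lem4} (for each $j$, $\mu(I\setminus J_{1,j})$ is either below $(t_j-1)/t_j^5$ or above $(2t_j-1)(t_j-1)/t_j^4$), and the ``spread case'' is ruled out not by a convexity estimate on $1-\Phi(G',\cdot)$ but by an intricate compressed-set argument (Lemma~\ref{Lem5}) that uses nesting of slices $I_1\supseteq I_2\supseteq\cdots$, builds an auxiliary set $\widehat I$, and recurses on \emph{that}; Lemma~\ref{Lem6} then removes the compression hypothesis via an explicit injection and the technical Lemma~\ref{Lem7}. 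Only after Lemma~\ref{Lem6} hands you a good coordinate $j$ does Proposition~\ref{Prop2} plus a monotonicity-in-$t_j$ argument yield the two claimed inequalities.

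Your proposed ``spread case'' is the genuine problem. The assertion that $\nu\mapsto 1-\Phi(G',\nu)$ is convex is not established (and $\Phi$ is piecewise constant on a discrete measure, with jumps governed by Theorem~\ref{Thm2}, so convexity in any useful sense needs care); more importantly, the pairwise constraints $I_{y'}\subseteq V(G')\setminus\partial I_y$ do not by themselves sum to a contradiction with $\mu(I)>\omega(t_n)$ without the nesting that compressions provide. Your concentrated case is also muddled: if $\delta_1$ is large then $j=n$ is the answer and there is no recursion into $G'$; recursing to find $j\in[n-1]$ is the wrong move there. Finally, note the ordering is $t_1\ge\cdots\ge t_n$, so $t_j\ge t_n$ for every $j$, not $t_j\le t_n$---your exponent paragraph has this backwards, and it matters because the monotonicity of $g(x)$ in $x=t_j$ (showing the bound at $t_j$ implies the bound at $t_n$) is a nontrivial step in the paper's proof.
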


As with Theorem \ref{Thm3}, our proof of Theorem \ref{Thm6} closely follows Brakensiek's arguments from Section 3.2 of \cite{Brakensiek}. We  have attempted to focus on the analysis that is needed to transfer the proofs to the setting in which the graphs in the product are not identical. 

The proofs of Theorem \ref{Thm3}, Corollary \ref{Cor1}, and Theorem \ref{Thm6} are somewhat technical, so we have decided to place them in Sections \ref{Iso} and \ref{Stability}, which are after the proof of Theorem \ref{Thm7}. Finally, we strengthen Burcroff's Conjecture \ref{Conj1} by removing the assumption that the complete multipartite graphs in the direct product are balanced. 

\begin{conjecture}\label{Conj2}
If $H_1,\ldots,H_n$ are complete multipartite graphs and $G=\prod_{i=1}^nH_i$, then \[\alpha(G)=\IR(G).\] 
\end{conjecture}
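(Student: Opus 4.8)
The plan is to run the three‑ingredient strategy behind Theorem \ref{Thm7}, but with arbitrary (possibly unbalanced) complete multipartite factors. Fix $G=\prod_{i=1}^nH_i$, discard the trivial case where some $H_i$ is edgeless, and order the factors so that $\beta(H_1)\le\cdots\le\beta(H_n)$. Extending the balanced computation one checks that $\alpha(G)=\beta(H_n)|V(G)|$, and that for each $j$ with $\beta(H_j)=\beta(H_n)$ and each maximum partite set $X_j(a)$ of $H_j$ the dictator set $J_{a,j}$ is a maximum independent set. The basic structural observation is that $\partial J_{a,j}=\{x\in V(G):x_j\notin X_j(a)\}$, so $N[J_{a,j}]=V(G)$, and more generally $N[A]\cap J_{a,j}=A$ for every $A\subseteq J_{a,j}$. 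By Theorem \ref{Thm3} any irredundant $S$ splits as $S=\Lon(S)\sqcup\Soc(S)$ with $\Lon(S)$ independent and $|\Soc(S)|\le 2^n$; assume toward a contradiction that $|S|>\alpha(G)$, so $|\Lon(S)|>\alpha(G)-2^n$ and $\mu(\Lon(S))=\beta(H_n)(1-\varepsilon)$ with $\varepsilon<2^n/(\beta(H_n)|V(G)|)$, which is tiny once $|V(G)|$ is large relative to $2^n/\beta(H_n)$.

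The second step is to generalize the stability theorem, Theorem \ref{Thm6}, to the unbalanced setting: a near‑maximum independent set $I$ with $\mu(I)=\beta(H_n)(1-\varepsilon)$ above an appropriate threshold satisfies $\mu(I\setminus J_{a,j})<4\varepsilon^{\eta(t_n)}$ for some dictator $J_{a,j}$ with $\beta(H_j)=\beta(H_n)$. The isoperimetric input, Theorem \ref{Thm2} and Corollary \ref{Cor1}, already handles non‑identical factors, so the Brakensiek‑style argument of Section \ref{Stability} should transfer with the analysis the authors describe; the one point needing care is the hypothesis of Theorem \ref{Thm2}, which can fail when three or more factors have independence ratio above $1/2$, and such configurations must be treated directly or folded into the finite list below.

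The third step converts approximate structure into exact structure. Apply stability to $\Lon(S)$, producing $J_{a,j}$ with $\mu(\Lon(S)\setminus J_{a,j})<4\varepsilon^{\eta(t_n)}$. If some $w\in\Lon(S)\setminus J_{a,j}$, then $w_j\notin X_j(a)$, so every $y\in J_{a,j}$ whose coordinate $y_i$ avoids the part of $H_i$ containing $w_i$ for all $i\neq j$ is adjacent to $w$, hence lies in $J_{a,j}\setminus\Lon(S)$; counting these forces $\mu(J_{a,j}\setminus\Lon(S))\ge\beta(H_n)\prod_{i\neq j}(1-\beta(H_i))$, a positive constant, which contradicts the stability bound once $\varepsilon$ is small. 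Thus $\Lon(S)\subseteq J_{a,j}$ exactly, whence $\partial\Lon(S)\subseteq\partial J_{a,j}$ and $V(G)\setminus N[\Lon(S)]\supseteq J_{a,j}\setminus\Lon(S)$. Conversely, a vertex $x\notin J_{a,j}$ lies outside $N[\Lon(S)]$ only if $\Lon(S)$ avoids the set $\{y\in J_{a,j}:y_i\text{ avoids the part of }H_i\text{ containing }x_i\text{ for all }i\neq j\}$, of size at least $\beta(H_n)|V(G)|\prod_{i\neq j}(1-\beta(H_i))$; but $\mu(J_{a,j}\setminus\Lon(S))=\beta(H_n)\varepsilon$ is smaller than this once $\varepsilon<\prod_{i\neq j}(1-\beta(H_i))$, so $V(G)\setminus N[\Lon(S)]=J_{a,j}\setminus\Lon(S)$. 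Since $S$ is irredundant, each $v\in\Soc(S)$ has a private neighbor $p(v)\notin N[S\setminus\{v\}]\supseteq N[\Lon(S)]$, the map $v\mapsto p(v)$ is injective, and its image lies in $V(G)\setminus N[\Lon(S)]=J_{a,j}\setminus\Lon(S)$, so $|\Soc(S)|\le|J_{a,j}|-|\Lon(S)|=\alpha(G)-|\Lon(S)|$ and $|S|\le\alpha(G)$, a contradiction. This settles every $G$ with $|V(G)|$ large in terms of $n$ and $\beta(H_n)$ and with $\beta(H_n)$ above the stability threshold.

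The main obstacle is the remaining finite collection of small graphs — those of bounded order or with $\beta(H_n)$ below the threshold $\omega$. For each fixed $n$ there are finitely many, but a uniform treatment also requires bounding $n$ for exceptional cases (e.g.\ small $\beta(H_n)$ forces $|V(H_n)|$ large) and then dispatching the resulting list, presumably by computer as in the balanced case. Since the balanced sub‑case of Conjecture \ref{Conj2} already contains the $37$ graphs left open by Theorem \ref{Thm7}, resolving these small cases — by a sharper combinatorial argument tailored to irredundance in low‑dimensional products, by a reduction of the unbalanced conjecture to the balanced one via successive vertex‑twinning (should $\alpha(G)=\IR(G)$ be shown stable under adding a twin), or by direct computation — is exactly the hard part that remains.
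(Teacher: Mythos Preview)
The statement you are addressing is \emph{Conjecture~\ref{Conj2}}, which the paper explicitly poses as an open problem and does not attempt to prove. So there is no ``paper's own proof'' to compare against; the relevant question is whether your outline actually closes the gap. It does not, and in fairness you say so yourself in the last paragraph.

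There are two genuine holes. First, the extension of the stability theorem (Theorem~\ref{Thm6}) to unbalanced factors is not a matter of ``should transfer.'' The paper's proof of Theorem~\ref{Thm6} for $K[u_i,t_i]$ works by collapsing to $\prod_iK_{t_i}$ and invoking the identity $\mu(\coll^{-1}(T))=\mu(T)$ (equation~\eqref{Eq44}), which is exactly the balanced hypothesis. In the unbalanced case the pushforward is the non\-uniform measure $\rho$, and all of the delicate numerical inequalities in Lemmas~\ref{Lem4}--\ref{Lem7} (the thresholds $(t_j-1)/t_j^5$ and $(2t_j-1)(t_j-1)/t_j^4$, the function $\omega$, etc.) would have to be re\-derived with weights; it is not clear they survive. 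Moreover, the isoperimetric input you lean on (Corollary~\ref{Cor1}) already requires $\beta(H_i)\le 1/2$ for all $i$, which fails for any unbalanced bipartite factor, not only when ``three or more'' factors have large ratio.

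Second, and decisively, your scheme reduces the conjecture to a ``finite collection of small graphs'' that by your own admission contains the $37$ balanced cases left open by Theorem~\ref{Thm7}. Since those cases remain unresolved in the paper, no argument that leaves them as residue can prove Conjecture~\ref{Conj2}. Your third step, the private\hyp{}neighbor counting that pins $\Soc(S)$ inside $J_{a,j}\setminus\Lon(S)$, is a nice sharpening of the endgame in the proof of Theorem~\ref{Thm7} and is essentially correct once steps one and two are in hand, but it does not touch the hard part.
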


\section{Near Independence of Irredundant Sets}\label{Near}

There is an alternative characterization of irredundant sets of a graph $G$ that follows immediately from the definition. Specifically, if $S\subseteq V(G)$, then $S$ is irredundant if and only if for every $v\in S$, one of the following holds: 
\begin{enumerate}[(a)]
\item No element of $S$ is adjacent to $v$.
\item There exists $w\in V(G)\setminus S$ such that $v$ is the only neighbor of $w$ in $S$.
\end{enumerate} 
If $S$ is an irredundant set, then we say a vertex $v\in S$ is \emph{lonely} if no element of $S$ is adjacent to $v$. Otherwise, we say $v$ is \emph{social}. If $v$ is social, then it must satisfy condition (b) in the above characterization. In this case, we say the vertex $w$ is a \emph{private neighbor} of $v$. Let $\pn[v;S]$ denote the set of private neighbors of the social vertex $v$. Let $\Lon(S)$ and $\Soc(S)$ denote the set of lonely vertices in $S$ and the set of social vertices in $S$, respectively. Observe that $\Lon(S)$ is an independent set. 

We are now able to prove Theorem \ref{Thm3}, which not only generalizes and improves upon Theorem \ref{Thm1}, but also turns out to be a crucial ingredient in the proof of Theorem \ref{Thm7}. 
\begin{proof}[Proof of Theorem \ref{Thm3}]
Let $H_1,\ldots,H_n$ be complete multipartite graphs, and let $G=\prod_{i=1}^nH_i$. Let $S$ be an irredundant set in $G$. We have seen that $\Lon(S)$ and $\Soc(S)$ form a partition of $S$ and that $\Lon(S)$ is independent. Hence, we need only show that $|\Soc(S)|\leq 2^n$. As in the introduction, let $X_i(1),\ldots,X_i(t_i)$ denote the partite sets of the complete multipartite graph $H_i$. For each vertex $v=(v_1,\ldots,v_n)\in V(G)$, let $c_v(i)$ be the unique integer in $[t_i]$ such that $v_i\in X_i(c_v(i))$. Furthermore, let $f_v(x_1,\ldots,x_n)\in\mathbb Q[x_1,\ldots,x_n]$ be the polynomial defined by $f_v(x_1,\ldots,x_n)=\prod_{i=1}^n(x_i-c_v(i))$. 

For each $v\in\Soc(S)$, choose some vertex $p_v\in\pn[v;S]$. Note that the vertices $p_v$ for $v\in\Soc(S)$ are all distinct by the definition of the sets $\pn[v;S]$. For any $y,z\in\Soc(S)$, we know that $p_y$ is not adjacent to $z$. This means that there is an index $i\in[n]$ such that $c_{p_y}(i)=c_z(i)$, so $f_{p_y}(c_z(1),\ldots,c_z(n))=0$. On the other hand, $f_{p_y}(c_y(1),\ldots,c_y(n))\neq 0$ because $p_y$ is adjacent to $y$. These conditions easily imply that the polynomials $f_{p_y}$ for $y\in\Soc(S)$ are linearly independent. These polynomials are multilinear, so they lie in the $2^n$-dimensional space spanned by the monomials of the form $\prod_{i\in A}x_i$ for $A\subseteq[n]$. This implies that $|\Soc(S)|\leq 2^n$ as desired.       
\end{proof}    

\section{The Proof of Most of Burcroff's Irredundance Conjecture}\label{Proof}

In this section, we prove Theorem \ref{Thm7}. We will need Theorem \ref{Thm3}, which we proved in the previous section, along with Corollary \ref{Cor1} and Theorem \ref{Thm6}, which we prove in the following sections. Recall the definitions of $\eta(t)$ and $\omega(t)$ from the introduction. Note that if $G=\prod_{i=1}^nK[u_i,t_i]$, where $t_1\geq\cdots\geq t_n\geq 2$, then Corollary \ref{Cor1} tells us that 
\begin{equation}\label{Eq32}
\Phi(G,\nu)\geq\nu^{1/\eta(t_n)}
\end{equation}
for all $\nu\in[0,1]$. 

Theorem \ref{Thm7} states that Conjecture \ref{Conj1} holds for all but $37$ exceptional graphs $Z_1,\ldots,Z_{37}$. These exceptional graphs are not necessarily counterexamples to the conjecture; they are simply the graphs that our proof technique cannot handle. These exceptional graphs are the following:

\begin{alignat*}{10}
&&&K_3^4\qquad\quad &&K[2,3]\times K_3^3\qquad\quad &&K[3,3]\times K_3^3\qquad\quad &&K_4\times K_3^3\\              
&&&K[2,4]\times K_3^3\qquad\quad &&K_4\times K[2,3]\times K_3^2\qquad\quad &&K_4^2\times K_3^2\qquad\quad &&K[2,4]\times K_4\times K_3^2\\
&&&K_4^2\times K[2,3]\times K_3\qquad\quad &&K_4^3\times K_3\qquad\quad &&K_5\times K_3^3\qquad\quad &&K[2,5]\times K_3^3\\
&&&K_5\times K[2,3]\times K_3^2\qquad\quad &&K_5\times K_4\times K_3^2\qquad\quad &&K_5\times K_4^2\times K_3\qquad\quad &&K_5^2\times K_3^2 \\
&&&K_6\times K_3^3\qquad\quad &&K_6\times K_4\times K_3^2\qquad\quad
&&K_6\times K_4^2\times K_3\qquad\quad &&K_6\times K_5\times K_3^2\\ 
&&&K_7\times K_3^3\qquad\quad 
&&K_7\times K_4\times K_3^2\qquad\quad &&K_8\times K_3^3\qquad\quad 
&&K_8\times K_4\times K_3^2\\
&&&K_9\times K_3^3\qquad\quad 
&&K_{10}\times K_3^3\qquad\quad 
&&K_3^5\qquad\quad
&&K[2,3]\times K_3^4\\ 
&&&K_4\times K_3^4\qquad\quad 
&&K_4^2\times K_3^3\qquad\quad
&&K_4^3\times K_3^2\qquad\quad 
&&K_5\times K_3^4\\ 
&&&K_5\times K_4\times K_3^3\qquad\quad
&&K_6\times K_3^4\qquad\quad 
&&K_7\times K_3^4\qquad\quad 
&&K_3^6\\
&&&K_4\times K_3^5  && &&  \\
\end{alignat*}

\begin{proof}[Proof of Theorem \ref{Thm7}]
Let $G=\prod_{i=1}^nK[u_i,t_i]$, where $t_1\geq\cdots\geq t_n$. As mentioned in the introduction, this theorem was proven in \cite{defantandiyer} in the cases $t_n\leq 2$ and $n\leq 3$ (although it was Burcroff who observed that the proof showing that $\alpha(G)=\Gamma(G)$ actually proves the stronger fact that $\alpha(G)=\IR(G)$). Hence, we may assume $t_n\geq 3$ and $n\geq 4$. Assume $G$ is not one of the $37$ exceptional graphs listed above. Let $S\subseteq V(G)$ be a maximum-sized irredundant set. We must have $\mu(S)\geq\beta(G)=1/t_n$. 

Consider the set $\Lon(S)$ of lonely vertices in $S$ and the set $\Soc(S)$ of social vertices in $S$, as defined in Section \ref{Near}. Since $\Lon(S)$ is an independent set, we know that $\mu(\Lon(S))\leq 1/t_n$. Write $\mu(\Lon(S))=\dfrac{1}{t_n}(1-\varepsilon)$. Let \[\varepsilon_0=\frac{2^nt_n}{|V(G)|}=\frac{2^n}{u_1\cdots u_nt_1\cdots t_{n-1}}.\] By Theorem \ref{Thm3}, we know that $|\Soc(S)|\leq 2^n$, so 
\begin{equation}\label{Eq34}
\varepsilon=1-t_n\mu(\Lon(S))=1-t_n(\mu(S)-\mu(\Soc(S)))\leq 1-t_n\left(\frac{1}{t_n}-\frac{2^n}{|V(G)|}\right)=\varepsilon_0.
\end{equation} 

Let us assume for the moment that $G\neq K_3^7$; we will return to the case $G=K_3^7$ later. We claim that 
\begin{equation}\label{Eq33}
\frac{1}{t_n}(1-\varepsilon_0)>\omega(t_n).
\end{equation}
We first prove this claim when $t_n\geq 4$. Because $n\geq 4$, we have \[\frac{1}{t_n}(1-\varepsilon_0)=\frac{1}{t_n}-\frac{2^n}{u_1\cdots u_nt_1\cdots t_n}\geq \frac{1}{t_n}-\frac{2^n}{t_n^n}\geq \frac{1}{t_n}-\left(\frac{2}{t_n}\right)^4.\] It is easy to check that $\dfrac{1}{t_n}-\left(\dfrac{2}{t_n}\right)^4>\omega(t_n)$ when $t_n\geq 4$.

We now prove that \eqref{Eq33} holds when $t_n=3$. We wish to see that $\dfrac{1}{3}(1-\varepsilon_0)>\omega(3)$, which we can rewrite as 
\begin{equation}\label{Eq35}
\frac{2^n}{u_1\cdots u_nt_1\cdots t_{n-1}}<1-3\omega(3)\approx 0.166285.
\end{equation} If $n\geq 8$, then \[\frac{2^n}{u_1\cdots u_nt_1\cdots t_{n-1}}\leq\frac{2^n}{3^{n-1}}<1-3\omega(3),\] so we may assume $n\leq 7$. 

Suppose $n=4$. It is easy to check that \eqref{Eq33} holds whenever $t_1\geq 11$ or $u_1\cdots u_n\geq 4$, so we may assume $t_1\leq 10$ and $u_1\cdots u_n\leq 3$. This leaves us with only finitely many graphs. We can now check by hand that among the remaining graphs, the claim fails precisely for those appearing in our list of exceptional graphs. In other words, we can use the assumption that $G$ is not in that list to verify that \eqref{Eq35} holds. 

The proofs of the cases $n=5$, $n=6$, and $n=7$ are similar to the proof of the case $n=4$. For the case $n=7$, we must also use the assumption that $G\neq K_3^7$.  
 
We now know that $\Lon(S)$ is an independent set satisfying \[\mu(\Lon(S))=\frac{1}{t_n}(1-\varepsilon)\geq\frac{1}{t_n}(1-\varepsilon_0)>\omega(t_n),\] so we can apply Theorem \ref{Thm6} to see that there exist $j\in[n]$ and $a\in[t_j]$ such that 
\begin{equation}\label{Eq39}
t_j<\frac{t_n}{1-\varepsilon}\quad\text{and}\quad\mu(\Lon(S)\setminus J_{a,j})<4\varepsilon^{\eta(t_n)}.
\end{equation} 

For every $v\in\Soc(S)$, choose a vertex $p_v\in\pn[v;S]$. Let \[P=\bigcup_{v\in\Soc(S)\cap J_{a,j}}\{p_v\}\quad\text{and}\quad Y=(S\setminus J_{a,j})\cup P.\] If $v\in\Soc(S)$, then $p_v$ is adjacent to $v$. Because $J_{a,j}$ is independent, $P$ is disjoint from $J_{a,j}$. It follows that $Y$ is disjoint from $J_{a,j}$. By the definition of a private neighbor given in Section \ref{Near}, the vertices $p_v$ for $v\in\Soc(S)\cap J_{a,j}$ are distinct and do not lie in $S$. Consequently, \[\mu(Y)=\mu(S\setminus J_{a,j})+\mu(P)=\mu(S\setminus J_{a,j})+\mu(\Soc(S)\cap J_{a,j})\] \[=\mu(\Soc(S)\cup(\Lon(S)\setminus J_{a,j}))=\mu(\Soc(S))+\mu(\Lon(S)\setminus J_{a,j})<\frac{2^n}{|V(G)|}+4\varepsilon^{\eta(t_n)}.\] Using \eqref{Eq34} and the definition of $\varepsilon_0$, we find that 
\begin{equation}\label{Eq36}
\mu(Y)<\varepsilon_0/t_n+4\varepsilon_0^{\eta(t_n)}.
\end{equation}    

Since $Y$ is disjoint from $J_{a,j}$, we have 
\begin{equation}\label{Eq37}
\mu((\partial Y)\cap J_{a,j})\geq\frac{1}{t_j}\mu(\partial Y)\geq\frac{1}{t_j}\mu(Y)^{1/\eta(t_n)},
\end{equation} where we have used Corollary \ref{Cor1} in the form of equation \eqref{Eq32}. By the definition of a private neighbor, $\partial P$ is disjoint from $\Lon(S)\cap J_{a,j}$. We also know that $\partial(\Lon(S)\setminus J_{a,j})$ is disjoint from $\Lon(S)\cap J_{a,j}$, so $(\partial Y)\cap J_{a,j}\subseteq J_{a,j}\setminus \Lon(S)$. Hence, \[\mu((\partial Y)\cap J_{a,j})\leq\mu(J_{a,j})-\mu(\Lon(S)\cap J_{a,j})\leq\frac{1}{t_n}-\mu(\Lon(S)\cap J_{a,j})\leq\mu(S)-\mu(\Lon(S)\cap J_{a,j})\] \begin{equation}\label{Eq42}
=\mu(S\setminus J_{a,j})+\mu(\Soc(S)\cap J_{a,j})=\mu(S\setminus J_{a,j})+\mu(P)=\mu(Y).
\end{equation} 

We wish to show that $S=J_{a,j}$. Assume that this is not the case. Because $\mu(S)\geq 1/t_n\geq\mu(J_{a,j})$, $S$ cannot be a proper subset of $J_{a,j}$. As a consequence, $\mu(Y)\geq\mu(S\setminus J_{a,j})>0$. Therefore, we can combine \eqref{Eq37} and \eqref{Eq42} to see that 
\begin{equation}\label{Eq40}
\frac{1}{t_j}\leq\mu(Y)^{1-1/\eta(t_n)}.
\end{equation}
Note that we have divided each side of an inequality by $\mu(Y)^{1/\eta(t_n)}$; this is precisely where we have used the fact that $\mu(Y)>0$. 
We now use \eqref{Eq34}, \eqref{Eq39}, \eqref{Eq36}, and \eqref{Eq40} to see that \[\frac{1-\varepsilon_0}{t_n}\leq\frac{1-\varepsilon}{t_n}<\frac{1}{t_j}\leq\mu(Y)^{1-1/\eta(t_n)}<\left(\varepsilon_0/t_n+4\varepsilon_0^{\eta(t_n)}\right)^{1-1/\eta(t_n)}
\] \[=\left(\frac{2^n}{u_1\cdots u_nt_1\cdots t_n}+4\left(\frac{2^n}{u_1\cdots u_nt_1\cdots t_{n-1}}\right)^{\eta(t_n)}\right)^{1-1/\eta(t_n)}\leq \left(\frac{2^n}{t_n^n}+4\left(\frac{2^n}{t_n^{n-1}}\right)^{\eta(t_n)}\right)^{1-1/\eta(t_n)}\]
\begin{equation}\label{Eq38}
\leq \left(\frac{16}{t_n^4}+4\left(\frac{16}{t_n^3}\right)^{\eta(t_n)}\right)^{1-1/\eta(t_n)},
\end{equation}
where we have used the fact that $n\geq 4$ in the last step. This tells us that \[1-\varepsilon_0<t_n\left(\frac{16}{t_n^4}+4\left(\frac{16}{t_n^3}\right)^{\eta(t_n)}\right)^{1-1/\eta(t_n)}<(t_n^2)^{1-1/\eta(t_n)}\left(\frac{16}{t_n^4}+4\left(\frac{16}{t_n^3}\right)^{\eta(t_n)}\right)^{1-1/\eta(t_n)}\] \[=\left(\frac{16}{t_n^2}+4t_n^2\left(\frac{16}{t_n^3}\right)^{\eta(t_n)}\right)^{1-1/\eta(t_n)}.\] This last expression is decreasing as a function of $t_n$. If $t_n\geq 5$, then \[1-\varepsilon_0<\left(\frac{16}{5^2}+4\cdot 5^2\left(\frac{16}{5^3}\right)^{\eta(5)}\right)^{1-1/\eta(5)}\approx 0.6809.\] This contradicts the fact that \[\varepsilon_0=\frac{2^n}{u_1\cdots u_nt_1\cdots t_{n-1}}\leq\frac{2^n}{t_n^{n-1}}\leq\frac{2^n}{5^{n-1}}\leq\frac{2^4}{5^3}=0.128.\] Therefore, we may assume $t_n\in\{3,4\}$.

If $t_n=4$, then \eqref{Eq38} tells us that \[\frac{1-\varepsilon_0}{4}<\left(\frac{16}{4^4}+4\left(\frac{16}{4^3}\right)^{\eta(4)}\right)^{1-1/\eta(4)}\approx 0.1181.\] This contradicts the fact that \[\varepsilon_0=\frac{2^n}{u_1\cdots u_nt_1\cdots t_{n-1}}\leq\frac{2^n}{t_n^{n-1}}\leq\frac{2^n}{4^{n-1}}\leq\frac{2^4}{4^3}=0.25.\]
If $t_n=3$, then invoking \eqref{Eq34}, \eqref{Eq33}, and \eqref{Eq39} yields \[t_j<\frac{t_n}{1-\varepsilon}\leq\frac{t_n}{1-\varepsilon_0}<\frac{1}{\omega(3)}<4.\] This tells us that $t_j=3$, so \eqref{Eq40} becomes 
\begin{equation}\label{Eq41}
\frac{1}{3}<\left(\varepsilon_0/3+4\varepsilon_0^{\eta(3)}\right)^{1-1/\eta(3)}.
\end{equation} We saw in \eqref{Eq35} that $\varepsilon_0<0.166285$, which easily contradicts \eqref{Eq41}. 

We have reached our desired contradiction in all cases except that in which $G=K_3^7$. In this case, we have $\mu(\Lon(S))=\frac{1}{3}(1-\varepsilon)\geq\frac{1}{3}(1-\varepsilon_0)=\frac{1}{3}(1-2^7/3^6)>7/27$, so we can apply Theorem \ref{Thm5} to see that \[\mu(\Lon(S)\setminus J_{a,j})<4\varepsilon^{\eta(3)}\] for some $j\in[7]$ and $a\in[3]$. The proof now proceeds exactly as before. We define the set $Y$ as before, assume that $S\neq J_{a,j}$, and deduce that \eqref{Eq40} holds with $t_j=t_n=3$. That is,
 \[\frac{1}{3}<\left(\varepsilon_0/3+4\varepsilon_0^{\eta(3)}\right)^{1-1/\eta(3)}=\left(\left(2^7/3^6\right)/3+4\left(2^7/3^6\right)^{\eta(3)}\right)^{1-1/\eta(3)}\approx 0.2256.\] This is our final contradiction.  
\end{proof}

\begin{remark}
Suppose $G=\prod_{i=1}^nK[u_i,t_i]$ is not one of the $37$ exceptional graphs listed above. The preceding proof of Theorem \ref{Thm7} shows that if $t_1\geq\cdots\geq t_n\geq 3$ and $n\geq 4$, then every irredundant set of $G$ of size $\IR(G)$ is actually an independent set. 
\end{remark}

\section{Vertex Isoperimetry}\label{Iso}
In this section, we prove Theorem \ref{Thm2} and Corollary \ref{Cor1}. Because deducing the corollary from the theorem is quick, we will do this first. 

\begin{proof}[Proof of Corollary \ref{Cor1}]
Assume $H_1,\ldots,H_n$ are complete multipartite graphs such that $\beta(H_1)\leq\cdots\leq\beta(H_n)\leq 1/2$. By Remark \ref{Rem1}, the hypotheses of Theorem \ref{Thm2} are satisfied. The proof of the corollary is by induction on $n$. The case $n=1$ is an immediate consequence of Theorem \ref{Thm2}, so assume $n\geq 2$. The desired inequality is obvious if $\nu=0$, so we can also assume $\nu>0$. 

If $\nu\leq\beta(H_n)$, then it follows from Theorem \ref{Thm2} and induction that \[\Phi(H_1\times\cdots\times H_n,\nu)=(1-\beta(H_n))\Phi\left(H_1\times\cdots\times H_{n-1},\frac{\nu}{\beta(H_n)}\right)\] \[\geq (1-\beta(H_n))\left(\frac{\nu}{\beta(H_n)}\right)^{\log_{\beta(H_{n-1})}(1-\beta(H_{n-1}))}\geq (1-\beta(H_n))\left(\frac{\nu}{\beta(H_n)}\right)^{\log_{\beta(H_n)}(1-\beta(H_n))}\] \[=\nu^{\log_{\beta(H_n)}(1-\beta(H_n))}.\] By a similar token, if $\beta(H_n)<\nu\leq 1$, then \[\Phi(H_1\times\cdots\times H_n,\nu)=1-\beta(H_n)+\beta(H_n)\Phi\left(H_1\times\cdots\times H_{n-1},\frac{\nu-\beta(H_n)}{1-\beta(H_n)}\right)\] \[\geq \beta(H_n)^{\log_{\beta(H_n)}(1-\beta(H_n))}+\beta(H_n)\left(\frac{\nu-\beta(H_n)}{1-\beta(H_n)}\right)^{\log_{\beta(H_{n-1})}(1-\beta(H_{n-1}))}\] \[\geq \beta(H_n)^{\log_{\beta(H_n)}(1-\beta(H_n))}+\beta(H_n)\left(\frac{\nu-\beta(H_n)}{1-\beta(H_n)}\right)^{\log_{\beta(H_n)}(1-\beta(H_n))}.\] To ease notation, put $\beta=\beta(H_n)$, $c=\log_{\beta}(1-\beta)$, and $x=\beta/\nu$. Our assumption on $\nu$ implies that $\beta\leq x<1$. We wish to show that \[\beta^c+\beta\left(\frac{\nu-\beta}{1-\beta}\right)^c\geq\nu^c.\] Dividing each side of this inequality by $\nu^c$, we find that it is equivalent to 
\begin{equation}\label{Eq1}
x^c+\beta\left(\frac{1-x}{1-\beta}\right)^c\geq 1.
\end{equation} Observe that equality holds in \eqref{Eq1} if $x=\beta$ or $x=1$. Noting that $0<c\leq 1$, we find that the left-hand side of \eqref{Eq1} is concave down (or constant if $c=1$) as a function of $x$ in the range $\beta\leq x<1$. Therefore, \eqref{Eq1} holds throughout this range. 
\end{proof}

We now turn our attention to proving Theorem \ref{Thm2}. The theorem is easy if $n=1$, so we can assume $n\geq 2$. Let $H_1,\ldots,H_n$ be as in the statement of the theorem, and let $G=\prod_{i=1}^nH_i$. Let $X_i(1),\ldots,X_i(t_i)$ be the partite sets in $H_i$. We may assume that $|X_i(1)|\geq\cdots\geq |X_i(t_i)|$. Notice that $\beta(H_i)=|X_i(1)|/|V(H_i)|$. 

It will be convenient to work with complete graphs rather than complete multipartite graphs, so we define a map $\coll$ that essentially collapses the partite sets. For each $i$, let $H_i'$ be a copy of the complete graph $K_{t_i}$ with $V(H_i)=[t_i]$. Let $G'=\prod_{i=1}^nH_i'$. Define $\coll_i:V(H_i)\to H_i'$ by declaring that $\coll_i$ sends the elements of $X_i(a)$ to $a$ for every $a\in[t_i]$. Let $\coll:V(G)\to V(G')$ be the product map $\coll=\coll_1\times\cdots\times\coll_n$. We also let $\rho=\coll_{\,*}\mu$ denote the pushforward of the uniform probability measure $\mu$ on $V(G)$ under the map $\coll$. That is, $\rho(T)=\mu(\coll^{-1}(T))$ for all $T\subseteq V(G')$. Alternatively, we can simply define $\rho$ on the singleton sets by \[\rho(\{(a_1,\ldots,a_n)\})=\frac{|X_1(a_1)|\cdots|X_n(a_n)|}{|V(G)|}\] and extend its definition by additivity. 

For every set $T\subseteq V(G)$, we have $\rho(\coll(T))=\mu(\coll^{-1}(\coll(T)))\geq\mu(T)$ and $\rho(\partial\coll(T))=\mu(\coll^{-1}(\partial\coll(T)))=\mu(\partial\coll^{-1}(\coll(T)))$ $=\mu(\partial T)$. It follows that \[\Phi_\mu(G,\nu)=\min\{\rho(\partial S):S\subseteq V(G'),\rho(S)\geq\nu\}.\] In other words, the vertex isoperimetric profile $\Phi_\mu(G,\cdot)$ of $G$ with respect to the uniform measure $\mu$ is the same as the vertex isoperimetric profile $\Phi_\rho(G',\cdot)$ of $G'$ with respect to the measure $\rho$. 

We use the notation $J_{a,i}$ from the introduction for the graph $G'$. More precisely, if $i\in[n]$ and $a\in [t_i]$, we put \[J_{a,i}=[t_1]\times\cdots\times [t_{i-1}]\times\{a\}\times[t_{j+1}]\times\cdots\times[t_n]\subseteq V(G').\] Observe that $\rho(J_{a,i})=|X_i(a)|/|V(H_i)|$; in particular, $\rho(J_{1,i})=\beta(H_i)$. The following proposition is crucial in establishing Theorem \ref{Thm2}.

\begin{proposition}\label{Prop1}
Fix $\nu\in(0,1]$, and choose a set $S\subseteq V(G')$ such that $\rho(S)\geq \nu$ and $\rho(\partial S)=\Phi_\rho(G',\nu)$. Assume that $S$ is chosen to maximize $\rho(S)$. There exists a set $S'\subseteq V(G')$ such that $\rho(S')=\rho(S)$, $\rho(\partial S')=\rho(\partial S)$, and either $S'\subseteq J_{1,n}$ or $J_{1,n}\subseteq S'$.
\end{proposition}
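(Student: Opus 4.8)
Here is how I would approach it.

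The plan is to run a compression argument on the last coordinate, in the spirit of Brakensiek's Appendix~B. Write $V(G')=V(G'')\times[t_n]$ with $G''=\prod_{i=1}^{n-1}H_i'$, and for $S\subseteq V(G')$ and $a\in[t_n]$ let $S_a=\{x\in V(G''):(x,a)\in S\}$ be its $a$-th \emph{slice}. Because $H_n'=K_{t_n}$, a pair $(y,b)$ lies in $\partial S$ exactly when some $(x,a)\in S$ with $a\neq b$ has $x$ adjacent to $y$ in $G''$; hence $(\partial S)_b=\partial''\big(\bigcup_{a\neq b}S_a\big)$, where $\partial''$ denotes the vertex boundary of $G''$. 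Writing $w_a=\rho(J_{a,n})$ for the weight of the value $a$ (so $w_1=\beta(H_n)$ is the largest and $\sum_aw_a=1$) and letting $\rho''$ be the corresponding product measure on $V(G'')$, we get $\rho(S)=\sum_aw_a\,\rho''(S_a)$ and $\rho(\partial S)=\sum_bw_b\,\rho''\big(\partial''\bigcup_{a\neq b}S_a\big)$. Throughout, $S$ is as in the statement, so we are free to replace it by any set $S'$ with $\rho(S')=\rho(S)$ and $\rho(\partial S')=\rho(\partial S)$.

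\emph{Making the slices nested, then two-level.} For $i\neq j$, consider the operation replacing the pair $(S_i,S_j)$ by $(S_i\cup S_j,S_i\cap S_j)$, putting the union into the slot of larger weight. This leaves $\bigcup_{a\neq b}S_a$ unchanged for $b\notin\{i,j\}$, and for the two affected values it replaces the old sets $U_i=\bigcup_{a\neq i}S_a$, $U_j=\bigcup_{a\neq j}S_a$ by $U_i\cap U_j$ and $U_i\cup U_j$, with $U_i\cap U_j$ going to the heavier slot. Modularity of $\rho''$ shows $\rho(S)$ does not decrease, while monotonicity and submodularity of the vertex boundary---$\partial''(X\cup Y)\subseteq\partial''X\cup\partial''Y$ and $\partial''(X\cap Y)\subseteq\partial''X\cap\partial''Y$---together with the heavier weight sitting on the slot whose union shrinks, show $\rho(\partial S)$ does not increase. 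By the extremality of $S$ both quantities are unchanged, so we may iterate (finitely many applications suffice, by a bubble-sort-style argument) and assume the slices are linearly ordered by inclusion in the same order as their weights; since $w_1$ is largest, $S_1\supseteq S_a$ for all $a$. Next, replacing $S_a$ by $S_2$ for every $a\geq2$ leaves every $\bigcup_{a\neq b}S_a$ unchanged (it is $S_2$ for $b=1$ and $S_1$ for $b\geq2$), hence preserves $\rho(\partial S)$ and does not decrease $\rho(S)$; extremality forces $S_2=\cdots=S_{t_n}$. Writing $\beta=w_1=\beta(H_n)$, $A=S_1$, and $B=S_2=\cdots=S_{t_n}$ (so $B\subseteq A$), we have reduced to a two-level set with $\rho(S)=\beta\,\rho''(A)+(1-\beta)\rho''(B)$ and $\rho(\partial S)=\beta\,\rho''(\partial''B)+(1-\beta)\rho''(\partial''A)$.

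\emph{Pushing to an extreme.} We must now exhibit such a two-level set with $B=\varnothing$ (i.e.\ $S'\subseteq J_{1,n}$) or $A=V(G'')$ (i.e.\ $J_{1,n}\subseteq S'$), which of the two being forced by whether $\rho(S)\le\beta$ or $\rho(S)\ge\beta$. We induct on $n$, so Theorem~\ref{Thm2} is available for the shorter product $G''$ (its hypotheses being inherited), as is the structural fact---part of the same induction---that $G''$ possesses a nested family of sets realizing $\Phi_{\rho''}(G'',\cdot)$. Replacing $A$ and $B$ by optimal members of such a family with the same measures, we may assume $\rho''(\partial''A)=\Phi_{\rho''}(G'',\rho''(A))$ and likewise for $B$. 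It then remains to minimize $F(a,b)=\beta\,\Phi_{\rho''}(G'',b)+(1-\beta)\,\Phi_{\rho''}(G'',a)$ over the polygon $\{0\le b\le a\le1,\ \beta a+(1-\beta)b\ge\nu\}$ and, among minimizers, to select one with $\beta a+(1-\beta)b$ maximal; using the recursive description of $\Phi_{\rho''}(G'',\cdot)$ in Theorem~\ref{Thm2}, one verifies this optimum occurs at a vertex with $b=0$ or with $a=1$. The one dangerous competitor is the vertex $a=b$, which is ruled out by the comparison inequality $(1-\beta)\,\Phi_{\rho''}(G'',\nu/\beta)\le\Phi_{\rho''}(G'',\nu)$ when $\nu\le\beta$ and by $(1-\beta)+\beta\,\Phi_{\rho''}\big(G'',\tfrac{\nu-\beta}{1-\beta}\big)\le\Phi_{\rho''}(G'',\nu)$ when $\nu\ge\beta$.

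\emph{Main obstacle.} The first two steps are routine; the weight of the argument lies in the last one. The task is to distill from the recursion in Theorem~\ref{Thm2} exactly enough about the shape of $\Phi_{\rho''}(G'',\cdot)$---in particular the behaviour of its ``flat'' intervals and the two comparison inequalities above---to pin the minimum of $F$, and the maximal-measure minimizer, to an extreme vertex. It is precisely here that the hypothesis $\prod_{k\in A}\tfrac{1-\beta(H_k)}{\beta(H_k)}\ge\tfrac{1-\beta(H_n)}{\beta(H_n)}$ for all nonempty $A\subseteq[n-1]$ enters: it makes the parameters $\beta(H_k)$ with $k<n$ that govern $\Phi_{\rho''}$ compatible with $\beta=\beta(H_n)$. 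Some extra care is needed in the base case $n-1=1$, where $\Phi_{\rho''}(G'',\cdot)$ is a non-decreasing step function rather than a smooth concave one, so the ``minimum at a vertex'' step must be checked by a direct case analysis.
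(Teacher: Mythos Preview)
Your first two reductions---replacing pairs of slices by their union and intersection (with the union in the heavier slot) and then leveling the slices $S_2,\ldots,S_{t_n}$---are correct and nicely argued; the submodularity/modularity checks go through exactly as you say, and extremality of $S$ forces both $\rho(S)$ and $\rho(\partial S)$ to be preserved.  This already takes a genuinely different route from the paper, which instead compresses in \emph{all} coordinates, passes to the hypercube $\{0,1\}^n$ via the map $\Pi$, and then applies a family of folding operators $\fold_A$ (indexed by $A\subseteq[n-1]$) that swap certain $\Pi$-fibres so as to increase $\rho$ and decrease $\rho(\partial\,\cdot\,)$; the hypothesis on the $\beta(H_k)$ enters precisely in checking that each $\fold_A$ moves mass in the right direction.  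Your two-level reduction essentially replaces the folding machinery by a one-dimensional optimisation, which is conceptually appealing.

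The gap is in your third step.  First, the assertion that the hypotheses of Theorem~\ref{Thm2} are ``inherited'' by the shorter product $G''$ is false: for the shorter product one needs $\prod_{k\in A}\frac{1-\beta(H_k)}{\beta(H_k)}\ge\frac{1-\beta(H_{n-1})}{\beta(H_{n-1})}$ for all nonempty $A\subseteq[n-2]$, and since $\frac{1-\beta(H_{n-1})}{\beta(H_{n-1})}\ge\frac{1-\beta(H_n)}{\beta(H_n)}$, this is a \emph{stronger} requirement than the original.  A concrete failure: take $\beta(H_1)=\beta(H_2)=\beta(H_3)=3/5$ and $\beta(H_4)=19/20$; then the original hypothesis holds for $n=4$ (every relevant product is at least $8/27>1/19$), yet for the truncated product with $n'=3$ and $A=\{1,2\}$ one gets $(2/3)^2=4/9<2/3$, so the hypothesis fails.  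You therefore cannot invoke the recursion of Theorem~\ref{Thm2} for $G''$ as part of a joint induction.  Second, even granting a recursive description of $\Phi_{\rho''}$, the optimisation of $F(a,b)=\beta\,\Phi_{\rho''}(G'',b)+(1-\beta)\Phi_{\rho''}(G'',a)$ over the polygon is not a linear programme: $\Phi_{\rho''}$ is a step function, so the minimum need not lie at a geometric vertex of the feasible region, and your two ``comparison inequalities'' (which amount to $\Phi_\rho(G',\nu)\le\Phi_{\rho''}(G'',\nu)$) only rule out the corner $a=b$, not interior points along other level sets.  You also need to match $\rho(S')=\rho(S)$ exactly with achievable measures $a^*,b^*$ in the nested family, which is not addressed.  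The paper's folding argument sidesteps all of this by working purely combinatorially on $\Pi(S)\subseteq\{0,1\}^n$, never invoking $\Phi$ for a shorter product.
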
 

To prove Proposition \ref{Prop1}, we follow \cite{Brakensiek} and define \emph{compressions}.

\begin{definition}\label{Def1}
For $x=(x_1,\ldots,x_n)\in[t_1]\times\cdots\times[t_n]$, let $x_{\neg i}=(x_1,\ldots,x_{i-1},x_{i+1},\ldots,x_n)$. For $T\subseteq [t_1]\times\cdots\times[t_n]$, define the \emph{compression of $T$ in the} $i^\text{th}$ \emph{coordinate} by \[c_i(T)=\{x\in [t_1]\times\cdots\times [t_n]:x_i\leq|\{y\in T:y_{\neg i}=x_{\neg i}\}|\}.\] The set $T$ is called \emph{compressed} if $c_i(T)=T$ for all $i\in[n]$.
\end{definition}

Brakensiek proves some important facts about compressions that are stated as Remark 2, Claim 5, and Claim 6 in \cite{Brakensiek}. The proofs generalize immediately to our more general setting, so we will not repeat them here. Instead, we state the results in the following lemmas and refer the reader to Brakensiek's paper for the proofs. 

\begin{lemma}\label{Lem1}
If $T\subseteq[t_1]\times\cdots\times[t_n]$, then there is a finite sequence $i_1,\ldots,i_k$ of elements of $[n]$ such that $c_{i_k}\circ c_{i_{k-1}}\circ\cdots\circ c_{i_1}(T)$ is compressed. 
\end{lemma}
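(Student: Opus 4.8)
The plan is to argue by a potential-function argument showing that a greedy sequence of compressions must terminate. First I would introduce the weight $w(x)=x_1+\cdots+x_n$ on $[t_1]\times\cdots\times[t_n]$ and the induced weight $W(T)=\sum_{x\in T}w(x)$ of a finite set $T$. The procedure is the obvious one: as long as the current set $T$ is not compressed, pick any $i\in[n]$ with $c_i(T)\neq T$ and replace $T$ by $c_i(T)$. To see that this halts, I would prove that for every $i$ and every finite $T\subseteq[t_1]\times\cdots\times[t_n]$ one has $W(c_i(T))\leq W(T)$, with equality if and only if $c_i(T)=T$. Granting this, each nontrivial step of the procedure strictly decreases the nonnegative integer $W$, so only finitely many steps occur, and the process stops at a compressed set; recording the indices $i$ used along the way yields the sequence $i_1,\ldots,i_k$ demanded by the lemma.

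The content is thus entirely in the inequality $W(c_i(T))\leq W(T)$, which I would establish by grouping the points of $T$ according to $x_{\neg i}$. For each $z$ in $[t_1]\times\cdots\times[t_{i-1}]\times[t_{i+1}]\times\cdots\times[t_n]$, let $T_z=\{y_i:y\in T,\ y_{\neg i}=z\}$ and $m_z=|T_z|$; by the definition of $c_i$ the fibre of $c_i(T)$ over $z$ is exactly $\{1,\ldots,m_z\}$. Hence $c_i$ does not change the multiset of fibre sizes $\{m_z\}$ (so in particular it preserves $|T|$), which means the part of $W$ coming from the coordinates $j\neq i$ is unaffected, while the part coming from coordinate $i$ changes from $\sum_z\bigl(\sum_{a\in T_z}a\bigr)$ to $\sum_z(1+\cdots+m_z)$. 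Since a set of $m_z$ distinct positive integers has sum at least $1+\cdots+m_z$, with equality exactly when it equals $\{1,\ldots,m_z\}$, the inequality follows, and equality holds precisely when every fibre of $T$ is already an initial segment, i.e.\ when $c_i(T)=T$.

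I do not expect any serious obstacle here: the only nontrivial ingredient is the elementary fact that among all $m$-element sets of positive integers the initial segment $\{1,\ldots,m\}$ uniquely minimizes the sum, and everything else is bookkeeping with fibres. (The case $T=\emptyset$ is trivial, since an empty set is already compressed.) This mirrors the structure of the corresponding fact in \cite{Brakensiek} referenced just before the lemma.
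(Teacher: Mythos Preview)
Your proposal is correct: the potential-function argument via $W(T)=\sum_{x\in T}\sum_j x_j$ is the standard way to show that iterated compressions terminate, and your fibre-by-fibre bookkeeping is exactly right. The paper itself does not supply a proof but simply refers to Brakensiek's argument (Remark~2 in \cite{Brakensiek}), which is precisely this monotonicity argument, so your approach coincides with the intended one.
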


\begin{lemma}\label{Lem2}
If $I$ is an independent set in $G'$ and $i\in[n]$, then $c_i(I)$ is also independent. 
\end{lemma}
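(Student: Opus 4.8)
The plan is to argue by contradiction: assuming $c_i(I)$ contains a pair of adjacent vertices, I will produce a pair of adjacent vertices inside $I$ itself.

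First I would record the two facts I need about $G'$. Two vertices $x,y\in V(G')$ are adjacent exactly when $x_j\neq y_j$ for every $j\in[n]$; equivalently, $I$ is independent iff any two distinct members of $I$ agree in at least one coordinate. Next, for a tuple $a$ in $\prod_{j\neq i}[t_j]$ I would call $\{x:x_{\neg i}=a\}$ the \emph{$i$-line over $a$} and set $m_a=|\{z\in I:z_{\neg i}=a\}|$; by Definition \ref{Def1}, $c_i(I)$ meets this $i$-line in exactly its first $m_a$ vertices, namely those with $x_i\le m_a$.

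The core of the argument: suppose $x,y\in c_i(I)$ are distinct and adjacent, so $x_j\neq y_j$ for all $j$ and in particular $x_i\neq y_i$; relabel so that $x_i<y_i$. Put $a=x_{\neg i}$ and $b=y_{\neg i}$, so $a_j\neq b_j$ for all $j\in[n]\setminus\{i\}$; since $n\ge 2$ this already gives $a\neq b$. From $x\in c_i(I)$ we get $m_a\ge x_i\ge 1$, and from $y\in c_i(I)$ we get $m_b\ge y_i\ge x_i+1\ge 2$. Because $m_b\ge 2$, the $i$-line over $b$ contains two points $w,w'\in I$, and these must have $w_i\neq w'_i$; also choose some $z\in I$ on the $i$-line over $a$. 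One of $w,w'$ — say $w$ — has $w_i\neq z_i$. Then $z$ and $w$ are distinct elements of $I$ with $z_i\neq w_i$ and $z_j=a_j\neq b_j=w_j$ for every $j\neq i$, hence adjacent in $G'$: contradiction.

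I expect no genuine obstacle here. The only delicate point is the inequality $m_b\ge 2$, which is where I use that coordinates run over $[t_i]=\{1,\dots,t_i\}$ (so $x_i\ge 1$) together with $x_i<y_i$; without it the $i$-line over $b$ might carry a single point of $I$ and the argument would stall. The case $n=1$ is trivial (a nonempty independent set in $K_{t_1}$ is a single vertex, and $c_1$ then does nothing), so assuming $n\ge 2$ is harmless and is exactly what makes $a\neq b$ above. This mirrors the standard ``compressions preserve independence'' argument; all one checks is that it survives the factors $K_{t_i}$ being of different sizes.
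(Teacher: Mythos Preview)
Your proof is correct. The paper does not actually prove this lemma but simply cites Claim~5 of Brakensiek \cite{Brakensiek}; your argument is the standard compression-preserves-independence argument (the key being $m_b\ge y_i\ge 2$, which supplies two points of $I$ on the line over $b$ and hence one whose $i$th coordinate avoids $z_i$), and this is essentially the approach in that reference.
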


\begin{lemma}\label{Lem3}
If $T\subseteq V(G')$, then $\rho(\partial c_i(T))\leq\rho(\partial T)$ for all $i\in[n]$. 
\end{lemma}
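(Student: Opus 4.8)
The plan is to prove $\rho(\partial c_i(T)) \le \rho(\partial T)$ by slicing $G'$ along the $i$th coordinate and verifying the inequality one fibre at a time, using two features of the setup: each $H_j'$ is a complete graph, so adjacency in $G'$ means coordinatewise inequality; and the partite sets are indexed so that $|X_i(1)| \ge \cdots \ge |X_i(t_i)|$, so the weight $\rho_i(\{a\}) := |X_i(a)|/|V(H_i)|$ is nonincreasing in $a$, while $c_i$ pushes each fibre onto an initial segment $\{1, 2, \dots\}$.

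First I would fix $i$ and write $\rho = \rho_{\neg i} \otimes \rho_i$, where $\rho_{\neg i}$ is the analogous product measure on $\prod_{j \ne i}[t_j]$. For $U \subseteq V(G')$ and $z' \in \prod_{j \ne i}[t_j]$, let $U_{z'} = \{a \in [t_i] : (z', a) \in U\}$ denote the fibre of $U$ over $z'$, writing $(z', a)$ for the vertex with $z'$ in the coordinates other than $i$ and $a$ in coordinate $i$; then $\rho(U) = \sum_{z'} \rho_{\neg i}(\{z'\})\, \rho_i(U_{z'})$. So it suffices to show, for every fixed $z'$, that $\rho_i\big((\partial c_i(T))_{z'}\big) \le \rho_i\big((\partial T)_{z'}\big)$.

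Next I would describe the boundary fibres explicitly. Put $S(z') = \{z \in \prod_{j \ne i}[t_j] : z_j \ne z'_j \text{ for all } j \ne i\}$ and $M(z') = \bigcup_{z \in S(z')} T_z \subseteq [t_i]$. Because adjacency in $G'$ is coordinatewise inequality, $a \in (\partial T)_{z'}$ iff there exist $z \in S(z')$ and $b \in T_z$ with $b \ne a$, i.e. iff $M(z') \setminus \{a\} \ne \emptyset$. Hence $(\partial T)_{z'}$ equals $\emptyset$ when $M(z') = \emptyset$, equals $[t_i] \setminus \{c\}$ when $M(z') = \{c\}$ is a singleton, and equals $[t_i]$ when $|M(z')| \ge 2$. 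The identical analysis applied to $c_i(T)$, together with $(c_i(T))_z = \{1, \dots, |T_z|\}$ (by Definition \ref{Def1}), shows that $\bigcup_{z \in S(z')}(c_i(T))_z = \{1, \dots, m(z')\}$, where $m(z') = \max_{z \in S(z')} |T_z|$ (with $\max \emptyset = 0$); so $(\partial c_i(T))_{z'}$ is $\emptyset$ if $m(z') = 0$, is $[t_i] \setminus \{1\}$ if $m(z') = 1$, and is $[t_i]$ if $m(z') \ge 2$.

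Finally I would compare the two fibres case by case, the only needed input being $m(z') \le |M(z')|$ (immediate from $T_z \subseteq M(z')$). If $m(z') = 0$, the compressed boundary fibre is empty and there is nothing to check. If $m(z') \ge 2$, then $|M(z')| \ge 2$ and both fibres equal $[t_i]$. If $m(z') = 1$, then $(\partial c_i(T))_{z'} = [t_i] \setminus \{1\}$, while $M(z') \ne \emptyset$ forces $(\partial T)_{z'}$ to be either $[t_i]$ or $[t_i] \setminus \{c\}$ for some $c$; since $\rho_i(\{1\}) \ge \rho_i(\{c\})$ by the ordering of the partite sets, in both subcases $\rho_i([t_i] \setminus \{1\}) \le \rho_i\big((\partial T)_{z'}\big)$. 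Summing these fibrewise inequalities against $\rho_{\neg i}$ gives the claim. There is no substantive obstacle here — it is a bookkeeping argument — but the one place where the hypotheses are genuinely used is the case $m(z') = 1$, which would fail if $c_i$ compressed fibres toward a heavier coordinate; matching the direction of compression to the decreasing order of the $|X_i(a)|$ is exactly what makes it go through. (This is Claim 6 of \cite{Brakensiek}; the proof is the same for our nonuniform measure $\rho$.)
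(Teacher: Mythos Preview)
Your proof is correct. The paper does not supply its own argument for this lemma but simply cites Claim~6 of \cite{Brakensiek} and asserts that the proof generalizes immediately to the weighted measure $\rho$; your fibrewise case analysis is exactly that generalization, and your observation that the ordering $|X_i(1)|\ge\cdots\ge|X_i(t_i)|$ is what makes the $m(z')=1$ case go through is the one place where the nonuniform measure requires a word of justification beyond Brakensiek's original statement.
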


Invoking Lemmas \ref{Lem1} and \ref{Lem3}, we find that we can assume without loss of generality that the set $S$ in Proposition \ref{Prop1} is compressed. 

Define \[\Pi:V(G')\to\{0,1\}^n\] by requiring that $\Pi(x)_i$ is $0$ if $x_i=1$ and is $1$ otherwise. Because $|X_i(1)|/|V(H_i)|=\beta(H_i)$, we have 
\begin{equation}\label{Eq4}
\rho(\Pi^{-1}(z))=\prod_{z_i=1}\prod_{z_k=0}(1-\beta(H_i))\beta(H_k)
\end{equation} 
for all $z\in\{0,1\}^n$. 

For $z\in\{0,1\}^n$, let $\neg z$ be the Boolean complement of $z$. If $T\subseteq V(G')$ is compressed, then 
\begin{equation}\label{Eq2}
\partial T=\bigcup_{z\in\Pi(T)}\Pi^{-1}(\neg z).
\end{equation} 
This means that 
\begin{equation}\label{Eq3}
\rho(\partial T)=\sum_{z\in\Pi(T)}\rho(\Pi^{-1}(\neg z))=\sum_{z\in\Pi(T)}\prod_{z_i=1}\prod_{z_k=0}\beta(H_i)(1-\beta(H_k)).
\end{equation} 
Consequently, 
\begin{equation}\label{Eq5}
\rho(\partial T)=\rho(\partial\,\Pi^{-1}(\Pi(T))).
\end{equation}

For every $B\subseteq[n]$, define $\sigma_B:\{0,1\}^n\to\{0,1\}^n$ by $\sigma_B(x)_i=x_i$ if $i\not\in B$ and $\sigma_B(x)_i=1-x_i$ if $i\in B$. For $A\subseteq[n-1]$ and $T\subseteq V(G')$, let \[F_A(T)=\{x\in \Pi(S):x_i=0\text{ for all }i\in A, x_n=1,\sigma_{A\cup\{n\}}(x)\not\in\Pi(S)\}.\] Following Brakensiek, we define the \emph{folding} operators $\fold_A$ for all $A\subseteq[n-1]$ by \[\fold_A(T)=\Pi^{-1}((\Pi(T)\setminus F_A(T))\cup\sigma_{A\cup\{n\}}(F_A(T))).\] Note that $\fold_A$ is idempotent in the sense that $\fold_A(\fold_A(T))=\fold_A(T)$. We claim that if $T$ and $\fold_A(T)$ are both compressed, then $\rho(T)\leq\rho(\fold_A(T))$ and $\rho(\partial T)\geq \rho(\partial\fold_A(T))$. First, observe that if $T$ is compressed, then $F_\emptyset(T)=\emptyset$, so $\fold_{\emptyset}(T)=\Pi^{-1}(\Pi(T))$. By \eqref{Eq5}, this proves our claim in the case $A=\emptyset$. Now assume that $A\subseteq[n-1]$ is nonempty. If $z\in F_A(T)$ and we let $z'=\sigma_{A\cup\{n\}}(z)$, then \[\rho(\Pi^{-1}(z))=\prod_{z_i=1}\prod_{z_k=0}(1-\beta(H_i))\beta(H_k)\quad\text{and}\quad\rho(\Pi^{-1}(z'))=\prod_{z'_i=1}\prod_{z'_k=0}(1-\beta(H_i))\beta(H_k)\] by \eqref{Eq4}. Using the hypothesis of Theorem \ref{Thm2}, we deduce that  \[\frac{\rho(\Pi^{-1}(z))}{\rho(\Pi^{-1}(z'))}=\frac{1-\beta(H_n)}{\beta(H_n)}\prod_{k\in A}\frac{\beta(H_k)}{1-\beta(H_k)}\leq 1.\] This shows that $\rho(\Pi^{-1}(z))\leq \rho(\Pi^{-1}(\sigma_{A\cup\{n\}}(z)))$ for $z\in F_A(T)$. Now, \[\rho(T)\leq\rho(\Pi^{-1}(\Pi(T)))=\sum_{z\in\Pi(T)}\rho(\Pi^{-1}(z))=\sum_{z\in\Pi(T)\setminus F_A(T)}\rho(\Pi^{-1}(z))+\sum_{z\in F_A(T)}\rho(\Pi^{-1}(z))\] \[\leq \sum_{z\in\Pi(T)\setminus F_A(T)}\rho(\Pi^{-1}(z))+\sum_{z\in F_A(T)}\rho(\Pi^{-1}(\sigma_{A\cup\{n\}}(z)))=\sum_{z\in(\Pi(T)\setminus F_A(T))\cup \sigma_{A\cup\{n\}}(F_A(T))}\rho(\Pi^{-1}(z))\] \[=\rho(\fold_A(T)).\] By a similar argument, \[\frac{\rho(\Pi^{-1}(\neg z))}{\rho(\Pi^{-1}(\sigma_{A\cup\{n\}}(\neg z)))}=\frac{\beta(H_n)}{1-\beta(H_n)}\prod_{k\in A}\frac{1-\beta(H_k)}{\beta(H_k)}\geq 1\] when $z\in F_A(T)$. By \eqref{Eq3}, \[\rho(\partial T)=\sum_{z\in\Pi(T)}\rho(\Pi^{-1}(\neg z))=\sum_{z\in\Pi(T)\setminus F_A(T)}\rho(\Pi^{-1}(\neg z))+\sum_{z\in F_A(T)}\rho(\Pi^{-1}(\neg z))\] \[\geq \sum_{z\in\Pi(T)\setminus F_A(T)}\rho(\Pi^{-1}(\neg z))+\sum_{z\in F_A(T)}\rho(\Pi^{-1}(\sigma_{A\cup\{n\}}(\neg z)))=\sum_{z\in(\Pi(T)\setminus F_A(T))\cup \sigma_{A\cup\{n\}}(F_A(T))}\rho(\Pi^{-1}(\neg z))\] \[=\rho(\partial \fold_A(T)).\]
This completes the proof of our claim, so we can return to our set $S$ and the proof of Proposition \ref{Prop1}. 

\begin{proof}[Proof of Proposition \ref{Prop1}] 
Using \eqref{Eq5} and our assumption that $S$ was chosen to maximize $\rho(S)$, we see that $S=\Pi^{-1}(\Pi(S))$. We claim that there is a sequence $A_1,\ldots,A_\ell$ of subsets of $[n-1]$ and a sequence $S=S_0,S_1,\ldots,S_\ell$ of compressed subsets of $V(G')$ such that $S_i=\fold_{A_i}(S_{i-1})$ for all $i\in[\ell]$ and $\fold_A(S_\ell)=S_\ell$ for all $A\subseteq[n-1]$.
We omit the proof of this claim because it is identical to the proof of Claim 18 and the discussion thereafter in \cite{Brakensiek}. Let $S'=S_\ell$. By the preceding discussion, we know that $\rho(S_\ell)\geq\rho(S_{\ell-1})\geq\cdots\geq\rho(S_0)=\rho(S)$ and $\rho(\partial S_{\ell})\leq\rho(\partial S_{\ell-1})\leq\cdots\leq\rho(\partial S_0)=\rho(\partial S)$. By our choice of $S$, this means that $\rho(S')=\rho(S)\geq \nu$ and $\rho(\partial S')=\rho(\partial S)=\Phi_\rho(G',\nu)$. We want to prove that either $S'\subseteq J_{1,n}$ or $J_{1,n}\subseteq S'$. Suppose $S'\not\subseteq J_{1,n}$ so that there exists $x\in S'\setminus J_{1,n}$. Let $A=\{i\in[n]:x_i=1\}=\{i\in[n]:\Pi(x)_i=0\}$. Because $x\not\in J_{1,n}$, we know that $n\not\in A$. The fact that $\fold_A(S')=S'$ tells us that $F_A(S')=\emptyset$. In particular, $x\not\in F_A(S')$. By the definition of $F_A(S')$, this means that the vector $y=\sigma_{A\cup\{n\}}(\Pi(x))$ is in $\Pi(S')$. However, $y_i=1$ for all $i\in [n-1]$ while $y_n=0$. It now follows easily from the fact that $S'$ is compressed that $J_{1,n}\subseteq S'$. 
\end{proof}

\begin{proof}[Proof of Theorem \ref{Thm2}]
As mentioned above, we may assume $n\geq 2$. Fix $\nu\in(0,1]$, and choose $S\subseteq V(G')$ such that $\rho(S)\geq\nu$ and $\rho(\partial S)=\Phi_\mu(G,\nu)=\Phi_\rho(G',\nu)$. We may assume that $S$ is chosen to maximize $\rho(S)$. By Proposition \ref{Prop1}, we may further assume that either $S\subseteq J_{1,n}$ or $J_{1,n}\subseteq S$. By abuse of notation, we let $\rho$ denote the pushforward of $\mu$ under the collapsing map from $V(H_1\times\cdots\times H_{n-1})$ to $V(H_1'\times\cdots\times H_{n-1}')$ (just as we defined $\rho$ on $V(G')$). 

Assume first that $\nu\leq\beta(H_n)$. We know that $\rho(J_{1,n})=\beta(H_n)$, so $\rho(\partial J_{1,n})\geq\Phi_\rho(G',\nu)=\Phi_\mu(G,\nu)$. It is easy to check that the proper containment $J_{1,n}\subsetneq S$ would imply the contradiction $\rho(\partial S)>\rho(\partial J_{1,n})$. Therefore, $S\subseteq J_{1,n}$. Let \[T=\{(x_1,\ldots,x_{n-1})\in V(H_1'\times\cdots\times H_{n-1}'):(x_1,\ldots,x_{n-1},1)\in S\}.\]  We have $\rho(T)=\rho(S)/\beta(H_n)\geq\nu/\beta(H_n)$, so \[\Phi_\mu(G,\nu)=\Phi_\rho(G',\nu)=\rho(\partial S)=(1-\beta(H_n))\rho(\partial T)\geq(1-\beta(H_n))\Phi_\rho\left(H_1'\times\cdots\times H_{n-1}',\frac{\nu}{\beta(H_n)}\right)\] \[=(1-\beta(H_n))\Phi_\mu\left(H_1\times\cdots\times H_{n-1},\frac{\nu}{\beta(H_n)}\right).\] On the other hand, there exists $T'\subseteq V(H_1'\times\cdots\times H_{n-1}')$ with $\rho(T')\geq \nu/\beta(H_n)$ and \[\rho(\partial T')=\Phi_\rho\left(H_1'\times\cdots\times H_{n-1}',\frac{\nu}{\beta(H_n)}\right)=\Phi_\mu\left(H_1\times\cdots\times H_{n-1},\frac{\nu}{\beta(H_n)}\right).\] Defining \[S'=\{(x_1,\ldots,x_{n-1},1):(x_1,\ldots,x_{n-1})\in T'\},\] we find that $\rho(S')=\beta(H_n)\rho(T')\geq\nu$ and \[\Phi_\mu(G,\nu)=\Phi_\rho(G',\nu)\leq\rho(\partial S')=(1-\beta(H_n))\rho(\partial T')=(1-\beta(H_n))\Phi_\mu\left(H_1\times\cdots\times H_{n-1},\frac{\nu}{\beta(H_n)}\right).\] This completes the proof in the case where $\nu\leq\beta(H_n)$. 

Assume now that $\beta(H_n)<\nu\leq 1$. We must have $J_{1,n}\subseteq S$. Let \[U=\{(x_1,\ldots,x_{n-1})\in V(H_1'\times\cdots\times H_{n-1}'):(x_1,\ldots,x_{n-1},y)\in S\text{ for some }y\in\{2,\ldots,t_n\}\}.\] If $(x_1,\ldots,x_{n-1})\in U$, then $(x_1,\ldots,x_{n-1},z)\in S$ for \emph{all} $z\in\{2,\ldots,t_n\}$. Indeed, adding the additional points of the form $(x_1,\ldots,x_{n-1},z)$ to $S$ increases $\rho(S)$ while keeping $\rho(\partial S)$ the same, so the claim follows from our assumption that $S$ was chosen to maximize $\rho(S)$. We have \[\rho(U)=\frac{\rho(S)-\rho(J_{1,n})}{1-\beta(H_n)}\geq\frac{\nu-\beta(H_n)}{1-\beta(H_n)}\] and \[\Phi_\mu(G,\nu)=\Phi_\rho(G',\nu)=\rho(\partial S)=\rho(\partial J_{1,n})+\rho(\partial U\times\{1\})=1-\beta(H_n)+\beta(H_n)\rho(\partial U)\] \[\geq 1-\beta(H_n)+\beta(H_n)\Phi_\rho\left(H_1'\times\cdots\times H_{n-1}',\frac{\nu-\beta(H_n)}{1-\beta(H_n)}\right)\] \[=1-\beta(H_n)+\beta(H_n)\Phi_\mu\left(H_1\times\cdots\times H_{n-1},\frac{\nu-\beta(H_n)}{1-\beta(H_n)}\right).\]
On the other hand, there exists $U'\subseteq V(H_1'\times\cdots\times H_{n-1}')$ with $\rho(U')\geq \dfrac{\nu-\beta(H_n)}{1-\beta(H_n)}$ and \[\rho(\partial U')=\Phi_\rho\left(H_1'\times\cdots\times H_{n-1}',\frac{\nu-\beta(H_n)}{1-\beta(H_n)}\right)=\Phi_\mu\left(H_1\times\cdots\times H_{n-1},\frac{\nu-\beta(H_n)}{1-\beta(H_n)}\right).\] Defining \[Q=\{(x_1,\ldots,x_{n-1},z):(x_1,\ldots,x_{n-1})\in U', z\in\{2,\ldots,t_n\}\}\quad \text{and} \quad S'=J_{1,n}\cup Q,\]
we find that $\rho(S')=\beta(H_n)+(1-\beta(H_n))\rho(U')\geq \nu$ and  \[\Phi_\mu(G,\nu)=\Phi_\rho(G',\nu)\leq \rho(\partial S')=\rho(\partial J_{1,n})+\rho(\partial Q\cap J_{1,n})=1-\beta(H_n)+\rho(\partial U'\times\{1\})\] \[=1-\beta(H_n)+\beta(H_n)\rho(\partial U')=1-\beta(H_n)+\beta(H_n)\Phi_\mu\left(H_1\times\cdots\times H_{n-1},\frac{\nu-\beta(H_n)}{1-\beta(H_n)}\right).\] This proves the case in which $\beta(H_n)<\nu\leq 1$. 
\end{proof}

\section{Independent Set Stability}\label{Stability}
This section is devoted to proving Theorem \ref{Thm6}. Recall the definitions of $\eta(t)$, $\omega(t)$, and $J_{a,j}$ from the introduction and Definition \ref{Def1} from the previous section. Suppose $H_1,\ldots,H_n$ are complete multipartite graphs such that each graph $H_i$ has $t_i$ partite sets, and let $G=\prod_{i=1}^nH_i$. Say a set $S\subseteq V(G)$ is \emph{sorted} if $\mu(S\cap J_{1,j})\geq\cdots\geq\mu(S\cap J_{t_j,j})$ for all $j\in[n]$. We will often assume the independent sets we consider are sorted. This is simply for notational convenience; we can always relabel the partite sets without loss of generality in order to ensure that the set under consideration is sorted. Most of the results in this section concern large independent sets in direct product graphs. However, we start with a result about independent sets in direct products of complete graphs that makes no assumption on the size of the independent set. 

\begin{proposition}\label{Prop2}
Let $G=\prod_{i=1}^nK_{t_i}$, where $t_1\geq\cdots\geq t_n\geq 3$. Let $I\subseteq V(G)$ be a sorted independent set with $\mu(I)=\dfrac{1}{t_n}(1-\varepsilon)$. Choose $j\in[n]$, and let $\delta=\mu(I\setminus J_{1,j})$. We have \[\varepsilon\geq 1-\frac{t_n}{t_j}-\delta t_n+\frac{t_n}{t_j-1}\left(\frac{\delta}{t_j-1}\right)^{1/\eta(t_n)}.\]
\end{proposition}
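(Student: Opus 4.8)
The plan is to relate the independent set $I\subseteq V(G)$ to its ``slice'' in the $j$-th coordinate and then apply the isoperimetric inequality from Theorem~\ref{Thm2} (or, more precisely, Corollary~\ref{Cor1} in the form of \eqref{Eq32}) to the part of $I$ that avoids the dominant partite set $X_j(1)$. Since $I$ is sorted, $J_{1,j}$ is the ``best'' slice, and we want to show that if $I$ escapes $J_{1,j}$ by a total mass $\delta$, then $\mu(I)$ is forced to drop. The key structural fact is that $I$ is independent, so for each $a\in[t_j]$ the set $I\cap J_{a,j}$, viewed as a subset of $V\big(\prod_{i\ne j}K_{t_i}\big)$, is an independent set whose boundary must be disjoint from every other slice's copy of $I$. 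I would organize the argument as follows.

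First, fix $j$ and write $G_{-j}=\prod_{i\ne j}K_{t_i}$, identifying $V(G)$ with $[t_j]\times V(G_{-j})$. For $a\in[t_j]$ let $I_a\subseteq V(G_{-j})$ be the slice $\{y:(a,y)\in I\}$, so $\mu(I)=\frac1{t_j}\sum_{a=1}^{t_j}\mu_{-j}(I_a)$ where $\mu_{-j}$ is the uniform measure on $V(G_{-j})$; by sortedness $\mu_{-j}(I_1)\ge\cdots\ge\mu_{-j}(I_{t_j})$ and $\sum_{a\ge 2}\mu_{-j}(I_a)=t_j\delta$. Second, I would use independence of $I$ in $G$: two vertices $(a,y)$ and $(a',y')$ with $a\ne a'$ are adjacent in $G$ iff $y$ and $y'$ are adjacent in $G_{-j}$ (since $a\ne a'$ makes them adjacent in $K_{t_j}$). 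Hence for $a\ne a'$ the set $\partial I_a$ (boundary taken in $G_{-j}$) is disjoint from $I_{a'}$, so $I_{a'}\subseteq V(G_{-j})\setminus\partial I_a$, giving $\mu_{-j}(I_{a'})\le 1-\mu_{-j}(\partial I_a)$. Third, I would bound $\mu_{-j}(\partial I_a)$ from below using \eqref{Eq32} applied to $G_{-j}$: since all $t_i\ge t_n$, we have $\mu_{-j}(\partial I_a)\ge \mu_{-j}(I_a)^{1/\eta(t_n)}$. The crucial choice is to apply this with $a$ ranging over $\{2,\dots,t_j\}$ and $a'=1$, and also with $a=1$ against each $a'\ge 2$.

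The heart of the estimate: from $a'=1$ and each $a\in\{2,\dots,t_j\}$ we get $\mu_{-j}(I_1)\le 1-\mu_{-j}(I_a)^{1/\eta(t_n)}$, but a cleaner route is to pick the single slice $a^\star\in\{2,\dots,t_j\}$ maximizing $\mu_{-j}(I_{a^\star})$; by the pigeonhole / sortedness we have $\mu_{-j}(I_{a^\star})=\mu_{-j}(I_2)\ge \frac{t_j\delta}{t_j-1}$. Then $\mu_{-j}(I_1)\le 1-\big(\tfrac{t_j\delta}{t_j-1}\big)^{1/\eta(t_n)}$, and also $\mu_{-j}(I_a)\le\mu_{-j}(I_2)$ for all $a\ge 2$. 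Combining,
\[
\mu(I)=\frac1{t_j}\Big(\mu_{-j}(I_1)+\sum_{a\ge 2}\mu_{-j}(I_a)\Big)\le \frac1{t_j}\Big(1-\big(\tfrac{t_j\delta}{t_j-1}\big)^{1/\eta(t_n)}\Big)+\delta.
\]
Writing $\mu(I)=\frac1{t_n}(1-\varepsilon)$ and solving for $\varepsilon$ yields
\[
\varepsilon\ge 1-\frac{t_n}{t_j}+\frac{t_n}{t_j}\Big(\tfrac{t_j\delta}{t_j-1}\Big)^{1/\eta(t_n)}-\delta t_n,
\]
and since $\big(\tfrac{t_j\delta}{t_j-1}\big)^{1/\eta(t_n)}=\Big(\tfrac{t_j}{t_j-1}\Big)^{1/\eta(t_n)}\Big(\tfrac{\delta}{t_j-1}\Big)^{1/\eta(t_n)}\cdot(t_j-1)^{\text{?}}$ — here I would have to be careful and instead bound $\mu_{-j}(I_a)\le\mu_{-j}(I_1)$ and sum $\sum_{a\ge 2}\mu_{-j}(I_a)=t_j\delta$ directly while using $\sum_{a\ge 2}\mu_{-j}(I_a)^{1/\eta(t_n)}$ convexity-style bounds, arriving at the stated $\frac{t_n}{t_j-1}\big(\frac{\delta}{t_j-1}\big)^{1/\eta(t_n)}$ term.

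\textbf{Main obstacle.} The delicate point is getting the exact constants $\frac{t_n}{t_j-1}$ and $\frac{\delta}{t_j-1}$ rather than something weaker: this requires applying the isoperimetric bound not to a single extremal slice but simultaneously to all $t_j-1$ slices $I_2,\dots,I_{t_j}$, using that their boundaries must all avoid $I_1$ and hence (by independence of $I$ restricted to any two non-first coordinates) avoid each other, and then invoking convexity of $x\mapsto x^{1/\eta(t_n)}$ together with the constraint $\sum_{a\ge2}\mu_{-j}(I_a)=t_j\delta$ to conclude $\sum_{a\ge 2}\mu_{-j}(\partial I_a)\ge (t_j-1)\big(\tfrac{t_j\delta}{t_j-1}\big)^{1/\eta(t_n)}$. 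Reconciling this sum with the bound $\mu_{-j}(I_1)+\mu_{-j}(\partial I_a)\le 1$ for every $a$, and tracking the factor $\tfrac1{t_j-1}$ inside versus outside the exponent, is the one place where I expect to need genuine care rather than routine manipulation; everything else is bookkeeping with $\mu$, sortedness, and \eqref{Eq32}.
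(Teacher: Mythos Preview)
Your single-slice computation is already a complete and correct proof, essentially identical to the paper's. The paper also uses only the slice $I\cap J_{2,j}$: from sortedness it gets $\mu(I\cap J_{2,j})\ge \delta/(t_j-1)$, applies Corollary~\ref{Cor1} to this set \emph{in $G$} (not in $G_{-j}$), notes that $\partial(I\cap J_{2,j})\cap J_{1,j}$ is a $\tfrac{1}{t_j-1}$ fraction of the full boundary, and uses independence to conclude $\mu(I\cap J_{1,j})\le \tfrac{1}{t_j}-\tfrac{1}{t_j-1}(\delta/(t_j-1))^{1/\eta(t_n)}$, which rearranges to the stated bound. The only difference from your argument is where the isoperimetric inequality is invoked: you apply it in $G_{-j}$ to the slice $I_2$ with $\mu_{-j}(I_2)\ge t_j\delta/(t_j-1)$, whereas the paper applies it in $G$ to $I\cap J_{2,j}$ with $\mu(I\cap J_{2,j})\ge \delta/(t_j-1)$.

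Your ``main obstacle'' is therefore illusory. The inequality you already derived,
\[
\varepsilon\ \ge\ 1-\frac{t_n}{t_j}-\delta t_n+\frac{t_n}{t_j}\Big(\frac{t_j\delta}{t_j-1}\Big)^{1/\eta(t_n)},
\]
is at least as strong as the proposition's: with $c=1/\eta(t_n)$ one only needs $\tfrac{1}{t_j}\big(\tfrac{t_j\delta}{t_j-1}\big)^{c}\ge \tfrac{1}{t_j-1}\big(\tfrac{\delta}{t_j-1}\big)^{c}$, i.e.\ $(t_j-1)\,t_j^{c-1}\ge 1$. Since $t_n^{c}=t_n/(t_n-1)$ by the definition of $\eta$, equality holds at $t_j=t_n$, and the left side has derivative $t_j^{c-2}(c(t_j-1)+1)>0$, so the inequality holds for all $t_j\ge t_n$. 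No multi-slice or convexity argument is needed; you can delete the entire ``main obstacle'' paragraph and the hedging after your displayed bound.
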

\begin{proof}
First, note that \[\mu(I\cap J_{2,j})\geq\frac{\delta}{t_j-1}.\] Using Corollary \ref{Cor1} (in the form of \eqref{Eq32}), we find that \[\mu(\partial(I\cap J_{2,j})\cap J_{1,j})=\frac{1}{t_j-1}\mu(\partial(I\cap J_{2,j}))\geq\frac{1}{t_j-1}\left(\frac{\delta}{t_j-1}\right)^{1/\eta(t_n)}.\] Because $I$ is independent, $I\cap J_{1,j}$ is disjoint from $\partial(I\cap J_{2,j})\cap J_{1,j}$. Thus, \[\frac{1}{t_n}(1-\varepsilon)-\delta=\mu(I)-\mu(I\setminus J_{1,j})=\mu(I\cap J_{1,j})\leq\mu(J_{1,j})-\mu(\partial(I\cap J_{2,j})\cap J_{1,j})\] \[\leq\frac{1}{t_j}-\frac{1}{t_j-1}\left(\frac{\delta}{t_j-1}\right)^{1/\eta(t_n)}.\] Rearranging the inequality $\dfrac{1}{t_n}(1-\varepsilon)-\delta\leq\dfrac{1}{t_j}-\dfrac{1}{t_j-1}\left(\dfrac{\delta}{t_j-1}\right)^{1/\eta(t_n)}$ yields the desired result.   
\end{proof}

In the following lemmas, we assume the independent set from Proposition \ref{Prop2} is large. In Lemma \ref{Lem4}, we find that for every choice of $j$, the value of $\delta$ must either be somewhat large or somewhat small. Lemma \ref{Lem5} shows that if the independent set is compressed, then it cannot be the case that $\delta$ is somewhat large for every choice of $j$. Consequently, in this case, there is some choice of $j$ that makes $\delta$ somewhat small. Lemma \ref{Lem7} is a purely technical result that allows us to prove Lemma \ref{Lem6}, where we remove the hypothesis from Lemma \ref{Lem5} that the independent set is compressed. Finally, we use Proposition \ref{Prop2} to show that if $\delta$ is somewhat small, then it is actually very small. This allows us to complete the proof of Theorem \ref{Thm6}. Many of the ideas below are adapted from Brakensiek's arguments in Section 3.2 of \cite{Brakensiek}. 

\begin{lemma}\label{Lem4}
Let $G=\prod_{i=1}^nK_{t_i}$, where $t_1\geq\cdots\geq t_n\geq 3$. Let $I\subseteq V(G)$ be a sorted independent set such that $\mu(I)>\omega(t_n)$. For all $j\in[n]$, either \[\mu(I\setminus J_{1,j})<\frac{t_j-1}{t_j^5}\quad\text{or}\quad\mu(I\setminus J_{1,j})>\frac{(2t_j-1)(t_j-1)}{t_j^4}.\]
\end{lemma}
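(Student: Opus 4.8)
The plan is to apply Proposition \ref{Prop2} to the set $I$ for the given coordinate $j$ and show that the quantity $\delta = \mu(I\setminus J_{1,j})$ cannot lie in the ``forbidden interval'' $\left[\frac{t_j-1}{t_j^5},\,\frac{(2t_j-1)(t_j-1)}{t_j^4}\right]$. Proposition \ref{Prop2} gives the lower bound
\[
\varepsilon \geq g(\delta) := 1 - \frac{t_n}{t_j} - \delta t_n + \frac{t_n}{t_j-1}\left(\frac{\delta}{t_j-1}\right)^{1/\eta(t_n)},
\]
and since $\mu(I) = \frac{1}{t_n}(1-\varepsilon) > \omega(t_n)$ we have $\varepsilon < 1 - t_n\omega(t_n)$. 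So if I can show that $g(\delta) \geq 1 - t_n\omega(t_n)$ for every $\delta$ in the forbidden interval, I get a contradiction, which forces $\delta$ to lie outside that interval, as claimed. The key structural observation is that $g$ is \emph{convex} in $\delta$ on $(0,1]$: the term $\delta^{1/\eta(t_n)}$ has $1/\eta(t_n) < 1$, so $\delta \mapsto \delta^{1/\eta(t_n)}$ is concave, but it enters $g$ with a positive coefficient --- wait, that makes $g$ concave, not convex. Let me reconsider: since $g$ is concave, to verify $g(\delta) \geq 1 - t_n\omega(t_n)$ throughout the interval it suffices to check it at the two \emph{endpoints} $\delta = \frac{t_j-1}{t_j^5}$ and $\delta = \frac{(2t_j-1)(t_j-1)}{t_j^4}$, since a concave function on an interval attains its minimum over that interval at an endpoint.

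So the first step is to record the concavity of $g$ in $\delta$ (immediate from $0 < 1/\eta(t_n) \leq 1$ and the linearity of the other terms). The second step is to evaluate $g$ at the two endpoints. At $\delta = \frac{t_j-1}{t_j^5}$, one computes $\frac{\delta}{t_j-1} = \frac{1}{t_j^5}$, so the last term becomes $\frac{t_n}{t_j-1}\cdot t_j^{-5/\eta(t_n)}$, giving
\[
g\!\left(\tfrac{t_j-1}{t_j^5}\right) = 1 - \frac{t_n}{t_j} - \frac{t_n(t_j-1)}{t_j^5} + \frac{t_n}{t_j-1}\,t_j^{-5/\eta(t_n)}.
\]
At $\delta = \frac{(2t_j-1)(t_j-1)}{t_j^4}$, we have $\frac{\delta}{t_j-1} = \frac{2t_j-1}{t_j^4}$, so
\[
g\!\left(\tfrac{(2t_j-1)(t_j-1)}{t_j^4}\right) = 1 - \frac{t_n}{t_j} - \frac{t_n(2t_j-1)(t_j-1)}{t_j^4} + \frac{t_n}{t_j-1}\left(\frac{2t_j-1}{t_j^4}\right)^{1/\eta(t_n)}.
\]
The third step is to verify that both of these exceed $1 - t_n\omega(t_n)$, i.e. that
\[
t_n\omega(t_n) > \frac{t_n}{t_j} + \frac{t_n(t_j-1)}{t_j^5} - \frac{t_n}{t_j-1}\,t_j^{-5/\eta(t_n)}
\]
and the analogous inequality for the right endpoint. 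Dividing through by $t_n$ and using $t_j \geq t_n \geq 3$, this should reduce (via monotonicity in $t_j$) to checking a finite list of one-variable inequalities. For $t_n \geq 5$, where $\omega(t_n) = \frac{4t_n-3}{t_n^3}$, the inequalities should follow from crude estimates because $\eta(t_n)$ grows like $t_n\log t_n$, making $t_j^{-5/\eta(t_n)}$ close to $1$ and dominating the small negative contributions; for $t_n \in \{3,4\}$ the specific numerical values of $\omega(3), \omega(4)$ are tailored precisely so these inequalities hold.

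The main obstacle I anticipate is the case analysis in the third step, and in particular handling the dependence on $t_j$ when $t_j$ is much larger than $t_n$: the terms $\frac{t_n}{t_j}$ and $\left(\frac{2t_j-1}{t_j^4}\right)^{1/\eta(t_n)}$ both depend on $t_j$, and one must be careful that the ``positive'' last term in $g$ does not shrink too fast. The cleanest route is probably to first show that, for fixed $t_n$, the right-hand side of the inequality to be beaten is monotone (decreasing) in $t_j$, so it suffices to treat $t_j = t_n$; then the claim becomes a single inequality in $t_n$, split into the cases $t_n = 3$, $t_n = 4$ (where one plugs in the explicit constants defining $\omega$ and checks numerically, exactly as in the definition of $\omega(t)$), and $t_n \geq 5$ (where one uses $\omega(t_n) = \frac{4t_n-3}{t_n^3}$ together with the lower bound $\eta(t_n) \geq \eta(5)$ or a similar crude bound to dispose of the tail). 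A subtle point worth flagging is the degenerate possibility $\delta = 0$: if $\mu(I\setminus J_{1,j}) = 0$ then $I\subseteq J_{1,j}$, $\delta = 0 < \frac{t_j-1}{t_j^5}$, and the first alternative of the lemma holds trivially, so we may assume $\delta > 0$ and all the logarithmic/power expressions above are well-defined.
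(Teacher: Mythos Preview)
Your approach is essentially the same as the paper's. You derive the key inequality directly from Proposition~\ref{Prop2} combined with $\mu(I)>\omega(t_n)$; the paper instead cites Brakensiek's Claim~13 to arrive at the equivalent form $\frac{1}{t_j}+\delta-\frac{1}{t_j-1}\bigl(\frac{\delta}{t_j-1}\bigr)^{1/\eta(t_n)}>\omega(t_n)$ (note this is just $1-g(\delta)/t_n>\omega(t_n)$, so your concavity of $g$ matches their ``concave up'' for the LHS), then uses the same convexity/endpoint reduction and the same monotonicity-in-$t_j$ argument to reduce to $t_j=t_n$. One minor tactical difference: the paper handles $t_n\in\{3,4\}$ \emph{before} the monotonicity step (verifying the two endpoint inequalities directly as functions of $x=t_j\geq t_n$), and only proves the derivative is negative for $t_n\geq 5$; you may find the monotonicity claim more delicate for $t_n=3,4$, so be prepared to treat those cases separately as the paper does.
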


\begin{proof}
Let $\delta=\mu(I\setminus J_{1,j})$. The first part of the proof essentially follows Brakensiek's proof of Claim 13 in \cite{Brakensiek} with minor modifications, so we omit the details. Following his argument (and using Corollary \ref{Cor1}), we arrive at the inequality 
\begin{equation}\label{Eq6}
\frac{1}{t_j}+\delta-\frac{1}{t_j-1}\left(\frac{\delta}{t_j-1}\right)^{1/\eta(t_n)}>\omega(t_n).
\end{equation}
We wish to show that this inequality fails for $\dfrac{t_j-1}{t_j^5}\leq\delta\leq\dfrac{(2t_j-1)(t_j-1)}{t_j^4}$. Because the left-hand side of \eqref{Eq6} is concave up as a function of $\delta$ when $\delta$ is in this range, it suffices to prove that the inequality fails when $\delta=\dfrac{t_j-1}{t_j^5}$ and when $\delta=\dfrac{(2t_j-1)(t_j-1)}{t_j^4}$. Replacing $t_j$ by the continuous variable $x$ and recalling that $t_j\geq t_n$, we see that is suffices to prove that  
\begin{equation}\label{Eq7}
\frac{1}{x}+\frac{x-1}{x^5}-\frac{1}{x-1}\left(\frac{1}{x^5}\right)^{1/\eta(t_n)}\leq\omega(t_n)
\end{equation} and 
\begin{equation}\label{Eq8}
\frac{1}{x}+\frac{(2x-1)(x-1)}{x^4}-\frac{1}{x-1}\left(\frac{2x-1}{x^4}\right)^{1/\eta(t_n)}\leq\omega(t_n)
\end{equation} whenever $x\geq t_n$. This is straightforward when $t_n=3$ or $t_n=4$, so we may assume $t_n\geq 5$. Let us differentiate the left-hand sides of \eqref{Eq7} and \eqref{Eq8} with respect to $x$. We check that these derivatives are negative so that the left-hand sides of these inequalities are decreasing in $x$. This means that it suffices to prove them in the case $x=t_n$. Under this assumption, \eqref{Eq7} and \eqref{Eq8} become \[\frac{1}{t_n}+\frac{t_n-1}{t_n^5}-\frac{(t_n-1)^4}{t_n^5}\leq\frac{4t_n-3}{t_n^3}\] and \[\frac{1}{t_n}+\frac{(2t_n-1)(t_n-1)}{t_n^4}-\frac{(t_n-1)^2}{t_n^3}\left(2-\frac{1}{t_n}\right)^{1/\eta(t_n)}\leq\frac{4t_n-3}{t_n^3}.\] Both of these inequalities are easy to verify (for the second, note that $(2-1/t_n)^{1/\eta(t_n)}>1$). 
\end{proof}

\begin{lemma}\label{Lem5}
Let $G=\prod_{i=1}^nK_{t_i}$, where $t_1\geq\cdots\geq t_n\geq 3$. Let $I\subseteq V(G)$ be a compressed independent set such that $\mu(I)>\omega(t_n)$. There exists $j\in[n]$ such that \[\mu(I\setminus J_{1,j})<\frac{t_j-1}{t_j^5}.\]
\end{lemma}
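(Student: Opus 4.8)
The plan is to argue by contradiction. Suppose that $\mu(I\setminus J_{1,j})\geq \frac{t_j-1}{t_j^5}$ for every $j\in[n]$. By Lemma \ref{Lem4}, this forces the stronger conclusion $\mu(I\setminus J_{1,j})>\frac{(2t_j-1)(t_j-1)}{t_j^4}$ for every $j$; equivalently, $\mu(I\cap J_{1,j})<\frac{1}{t_j}-\frac{(2t_j-1)(t_j-1)}{t_j^4}$, i.e.\ the ``first slice'' in each coordinate misses a definite amount of $I$. The idea is that these simultaneous lower bounds on $\mu(I\setminus J_{1,j})$, combined with the fact that $I$ is compressed, are too strong to coexist with the lower bound $\mu(I)>\omega(t_n)$: a compressed set that is deficient in its first slice in \emph{every} coordinate cannot itself be large.

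First I would exploit compression via the framework already set up in Section \ref{Iso}. For a compressed set, membership is governed by the projection $\Pi$ to $\{0,1\}^n$ (in the complete-graph case $G=G'$, so $\coll$ is the identity and all the machinery applies directly), and by \eqref{Eq2} we have $\partial I = \bigcup_{z\in\Pi(I)}\Pi^{-1}(\neg z)$. Since $I$ is independent, $\Pi(I)$ must be an \emph{antichain-free} family in the appropriate sense — more precisely, $I$ independent and compressed forces: for every $z\in\Pi(I)$, $\neg z\notin\Pi(I)$, and in fact $I\cap\partial I=\emptyset$ gives a self-complementary-free condition. The key structural point I want to extract is: if $\mu(I\cap J_{1,j})$ is small for every $j$ — say $I$ ``uses'' coordinate value $1$ rarely in each slot — then because $I$ is a downset under the compression order, the bulk of $I$ sits on vertices $x$ with many coordinates equal to $1$, which is contradictory. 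I would make this precise by writing $\mu(I) = \sum_{z\in\Pi(I)}\rho(\Pi^{-1}(z))$ and noting $\rho(\Pi^{-1}(z)) = \prod_{z_i=0}\frac{1}{t_i}\prod_{z_i=1}\frac{t_i-1}{t_i}$, then bounding $\mu(I\cap J_{1,j}) = \sum_{z\in\Pi(I):z_j=0}\rho(\Pi^{-1}(z))$ from below. The hypothesis that $\mu(I\cap J_{1,j})$ is small for all $j$ means $\Pi(I)$ is concentrated on $z$ with $z_j=1$; intersecting over all $j$ pushes mass toward the all-ones vector $z=(1,\ldots,1)$, whose fiber has measure $\prod_i\frac{t_i-1}{t_i}\leq\prod_i(1-1/t_n)$, and a more careful accounting shows $\mu(I)$ cannot exceed something close to $\frac{t_n-1}{t_n}\cdot\frac{1}{t_n}$-ish, certainly below $\omega(t_n)$ after the deficiencies are subtracted. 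I expect the honest way to do this is Brakensiek's induction (his Claim 14 in \cite{Brakensiek}): induct on $n$, split off the last coordinate $J_{1,n}$ versus its complement, apply Lemma \ref{Lem4} and the inductive hypothesis to the $(n-1)$-fold product living inside each part, and combine the two estimates.

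The main obstacle will be the bookkeeping in the inductive step: after restricting $I$ to $J_{1,n}$ (which is a copy of $\prod_{i=1}^{n-1}K_{t_i}$, rescaled) and to its complement, one must verify that the restricted sets are still compressed and still large enough to invoke the inductive hypothesis, and then reassemble the two slice-deficiency bounds into a contradiction with $\mu(I)>\omega(t_n)$. The delicate points are (i) the restricted independent set in $J_{1,n}$ need not have $\mu$-measure above $\omega(t_{n-1})$, so one has to be careful about which case of Lemma \ref{Lem4} or which base case applies, and (ii) the numerics of $\omega$ and $\eta$ — particularly the transitions at $t_n=3,4$ — have to be checked by the same kind of single-variable concavity/monotonicity arguments used in the proof of Lemma \ref{Lem4}. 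Since the excerpt says ``Many of the ideas below are adapted from Brakensiek's arguments in Section 3.2 of \cite{Brakensiek},'' I would follow his proof of the analogous claim closely, substituting $t_n$ for the common parameter $t$ wherever a bound on the isoperimetric exponent is invoked and re-justifying each monotonicity claim in the variable $t_j\geq t_n$ rather than at a single value.
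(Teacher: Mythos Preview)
Your outline starts correctly---contradiction plus Lemma~\ref{Lem4} to get $\mu(I\setminus J_{1,j})>\frac{(2t_j-1)(t_j-1)}{t_j^4}$ for every $j$, then induction on $n$---but two essential pieces are missing, and the $\Pi$-based heuristic is a dead end. On the latter: the bound $\mu(I\setminus J_{1,j})>\frac{(2t_j-1)(t_j-1)}{t_j^4}$ is of order $2/t_j^{2}$, tiny compared with $\mu(I)$, so the first slice still captures almost all of $I$ in every coordinate; there is no concentration toward the all-ones fiber to exploit, and a compressed set (e.g.\ all of $V(G)$) can easily have this much mass outside every first slice.

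The first real gap is the inductive step: you have not identified a set in $G'=\prod_{i<n}K_{t_i}$ to which the hypothesis applies. Writing $I_b=\{x:(x,b)\in I\}$, the obvious candidate $I_1$ is \emph{not} independent in $G'$ in general (two points $(x,1),(y,1)\in I$ with $x\sim_{G'}y$ are nonadjacent in $G$). The slice $I_2$ \emph{is} independent (compression gives $(y,2)\in I\Rightarrow(y,1)\in I$, so $(x,2)\sim_G(y,1)$ would violate independence of $I$), but may be too small to satisfy $\mu>\omega(t_{n-1})$. The paper's fix is to build $\widehat I=I_2\cup\bigl((I_1\setminus I_2)\cap J'_{a,n-1}\bigr)$ for a well-chosen $a$, check it is still independent, compress it to $\widetilde I$ with $I_2\subseteq\widetilde I\subseteq I_1$, and verify $\mu(\widetilde I)\geq t_n\mu(I)/t_{n-1}>\omega(t_{n-1})$; only then does induction produce some $j\in[n-1]$ with $\mu(\widetilde I\setminus J'_{1,j})<(t_j-1)/t_j^5$.

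The second gap is the endgame: this inductive conclusion does not by itself contradict \eqref{Eq10}. From $I_2\subseteq\widetilde I$ one deduces $\mu\bigl(I\setminus(J_{1,j}\cup J_{1,n})\bigr)$ is small, so both $\mu((I\setminus J_{1,j})\cap J_{1,n})$ and $\mu((I\setminus J_{1,n})\cap J_{1,j})$ are large. Setting $I'=I\cap J_{2,j}\cap J_{1,n}$ and $I''=I\cap J_{1,j}\cap J_{2,n}$, compression gives $\mu(I')>1/t_j^3$ and a comparable lower bound on $\mu(I'')$; applying Corollary~\ref{Cor1} to $I'$ and using that $I''$ and $\partial I'$ are disjoint inside the cell $J_{1,j}\cap J_{2,n}$ yields
\[
\frac{1}{t_jt_n}\geq \mu(I'')+\frac{1}{(t_j-1)(t_n-1)}\,\mu(I')^{1/\eta(t_n)},
\]
and the numerical analysis of this (inequality~\eqref{Eq17}) is where the contradiction actually lands. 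Your proposal stops before this entire isoperimetric step.
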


\begin{proof}
The beginning of the proof follows Brakensiek's proof of Lemma 15 in \cite{Brakensiek}. We induct on $n$. If $n=1$, then we are done because $I=\{1\}=J_{1,1}$. Assume that $n\geq 2$ and that the lemma holds for all smaller values of $n$. Let $G'=\prod_{i=1}^{n-1}K_{t_i}$. Let $J_{a,i}'=[t_1]\times\cdots\times[t_{i-1}]\times\{a\}\times[t_{i+1}]\times\cdots\times[t_{n-1}]$. By way of contradiction, assume that $\mu(I\setminus J_{1,j})\geq\dfrac{t_j-1}{t_j^5}$ for all $j\in [n]$. According to Lemma \ref{Lem4}, this implies that 
\begin{equation}\label{Eq10}
\mu(I\setminus J_{1,j})\geq\frac{(2t_j-1)(t_j-1)}{t_j^4}
\end{equation} for all $j\in[n]$. 

For $b\in[t_n]$, put \[I_b=\{(x_1,\ldots,x_{n-1})\in V(G'):(x_1,\ldots,x_{n-1},b)\in I\}.\] The sets $I_b$ are compressed because $I$ is compressed. Choose $a\in[t_{n-1}]$ such that $\mu((I_1\setminus I_2)\cap J_{a,n-1})$ is maximal. Let $\widehat I=I_2\cup((I_1\setminus I_2)\cap J_{a,n-1})$. If we follow Brakensiek's argument \emph{mutatis mutandis}, we find that $\widehat I$ is an independent set. Moreover, $I_2\subseteq\widehat I\subseteq I_1$. By Lemmas \ref{Lem1} and \ref{Lem2}, we can repeatedly apply compressions to the set $\widehat I$ until we obtain a compressed independent set $\widetilde I\subseteq V(G')$. Because $I_1$ and $I_2$ are already compressed, we know that $I_2\subseteq \widetilde I\subseteq I_1$. Now,  
\begin{equation}\label{Eq9}
\mu\left(\widetilde I\right)=\mu\left(\widehat I\right)\geq\mu(I_2)+\frac{\mu(I_1)-\mu(I_2)}{t_{n-1}}=\frac{\mu(I_1)+(t_{n-1}-1)\mu(I_2)}{t_{n-1}}\geq\frac{\mu(I_1)+(t_n-1)\mu(I_2)}{t_{n-1}}.
\end{equation} Because $I$ is compressed, we have $\mu(I_2)\geq\mu(I_3)\geq\cdots\geq\mu(I_{t_n})$. Thus, $\mu(I_1)+(t_n-1)\mu(I_2)\geq\sum_{a=1}^{t_n}\mu(I_a)=t_n\mu(I)$. Combining this with \eqref{Eq9} yields \[\mu\left(\widetilde I\right)\geq\frac{t_n\mu(I)}{t_{n-1}}\geq\mu(I)>\omega(t_n)\geq\omega(t_{n-1}).\] We can now apply our induction hypothesis to the compressed independent set $\widetilde I$ to see that there exists $j\in[n-1]$ such that \[\mu(\widetilde I\setminus J_{1,j}')<\frac{t_j-1}{t_j^5}.\] Because $I$ is compressed and $I_2\subseteq \widetilde I$, we have \[\mu(I\setminus (J_{1,j}\cup J_{1,n}))=\frac{1}{t_n}\sum_{b=2}^{t_n}\mu(I_b\setminus J_{1,j}')\leq \frac{t_n-1}{t_n}\mu(I_2\setminus J_{1,j}')\leq \frac{t_n-1}{t_n}\mu(\widetilde I\setminus J_{1,j}')<\frac{t_n-1}{t_n}\cdot\frac{t_j-1}{t_j^5}.\]
Invoking \eqref{Eq10}, we obtain the inequalities 
\begin{equation}\label{Eq12}
\mu((I\setminus J_{1,j})\cap J_{1,n})=\mu(I\setminus J_{1,j})-\mu(I\setminus(J_{1,j}\cup J_{1,n}))\geq \frac{(2t_j-1)(t_j-1)}{t_j^4}-\frac{t_n-1}{t_n}\cdot\frac{t_j-1}{t_j^5}.
\end{equation} 
and
\begin{equation}\label{Eq11}
\mu((I\setminus J_{1,n})\cap J_{1,j})=\mu(I\setminus J_{1,n})-\mu(I\setminus(J_{1,j}\cup J_{1,n}))\geq \frac{(2t_n-1)(t_n-1)}{t_n^4}-\frac{t_n-1}{t_n}\cdot\frac{t_j-1}{t_j^5}
\end{equation}
Put $I'=I\cap J_{2,j}\cap J_{1,n}$ and $I''=I\cap J_{1,j}\cap J_{2,n}$. Because $I$ is compressed, 
\begin{equation}\label{Eq15}
\mu(I')\geq\frac{1}{t_j-1}\mu((I\setminus J_{1,j})\cap J_{1,n})\geq \frac{2t_j-1}{t_j^4}-\frac{t_n-1}{t_nt_j^5}>\frac{1}{t_j^3}
\end{equation}
and
\begin{equation}\label{Eq16}
\mu(I'')\geq\frac{1}{t_n-1}\mu((I\setminus J_{1,n})\cap J_{1,j})\geq \frac{2t_n-1}{t_n^4}-\frac{t_j-1}{t_nt_j^5}.
\end{equation}
The elements of $I'$ have constant $j^\text{th}$ and $n^\text{th}$ coordinates, so 
\begin{equation}\label{Eq13}
\mu(\partial I'\cap J_{1,j}\cap J_{2,n})=\frac{1}{(t_j-1)(t_n-1)}\mu(\partial I')\geq\frac{1}{(t_j-1)(t_n-1)}\Phi(G,\mu(I')).
\end{equation}
Finally, observe that 
\begin{equation}\label{Eq14}
\mu(I'')+\mu(\partial I'\cap J_{1,j}\cap J_{2,n})\leq\mu(J_{1,j}\cap J_{2,n})=\frac{1}{t_jt_n}
\end{equation} because $I'\cup I''$ is an independent set. We now combine \eqref{Eq15}, \eqref{Eq16}, \eqref{Eq13}, \eqref{Eq14}, and Corollary \ref{Cor1} to obtain
\[\frac{1}{t_jt_n}\geq\mu(I'')+\mu(\partial I'\cap J_{1,j}\cap J_{2,n})\geq \frac{2t_n-1}{t_n^4}-\frac{t_j-1}{t_nt_j^5}+\frac{1}{(t_j-1)(t_n-1)}\Phi(G,\mu(I'))\] \[\geq\frac{2t_n-1}{t_n^4}-\frac{t_j-1}{t_nt_j^5}+\frac{1}{(t_j-1)(t_n-1)}\mu(I')^{1/\eta(t_n)}\] \[\geq\frac{2t_n-1}{t_n^4}-\frac{1}{t_nt_j^4}+\frac{1}{(t_j-1)(t_n-1)}\left(\frac{1}{t_j^3}\right)^{1/\eta(t_n)}.\] 

We seek a contradiction, so our goal is to prove that \[\frac{2t_n-1}{t_n^4}-\frac{1}{t_nt_j^4}+\frac{1}{(t_j-1)(t_n-1)}\left(\frac{1}{t_j^3}\right)^{1/\eta(t_n)}>\frac{1}{t_jt_n}.\] Multiplying both sides of this inequality by $t_jt_n$ yields 
\begin{equation}\label{Eq17}
t_j\frac{2t_n-1}{t_n^3}-\frac{1}{t_j^3}+\frac{t_jt_n}{(t_j-1)(t_n-1)}\left(\frac{1}{t_j^3}\right)^{1/\eta(t_n)}>1.
\end{equation}
It is straightforward (though somewhat tedious) to verify that \eqref{Eq17} holds for each fixed $t_n\in\{3,\ldots,21\}$, so we may assume $t_n\geq 22$. To ease notation, let $Q(t_j,t_n)$ denote the left-hand side of \eqref{Eq17}. If we fix $t_n$ and replace $t_j$ with a continuous variable $x\geq t_n$, then we can differentiate $Q(x,t_n)$ with respect to $x$ and find (after some simplifying) that \[\frac{\partial}{\partial x}Q(x,t_n)=\frac{2t_n-1}{t_n^3}+\frac{3}{x^4}-\frac{t_n}{t_n-1}\frac{x^{-3/\eta(t_n)}}{(x-1)^2}(1+(3/\eta(t_n))(x-1))\] \[>\frac{2t_n-1}{t_n^3}-\frac{t_n}{t_n-1}\frac{x^{-3/\eta(t_n)}}{(x-1)^2}(1+(3/\eta(t_n))(x-1)).\] This last expression is increasing as a function of $x$, so we obtain a lower bound for $\dfrac{\partial}{\partial x}Q(x,t_n)$ by evaluating that expression when $x=t_n$. More precisely, \[\frac{\partial}{\partial x}Q(x,t_n)>\frac{2t_n-1}{t_n^3}-\frac{t_n}{t_n-1}\frac{t_n^{-3/\eta(t_n)}}{(t_n-1)^2}(1+(3/\eta(t_n))(t_n-1))=\frac{2t_n-1}{t_n^3}-\frac{1}{t_n^2}(1+(3/\eta(t_n))(t_n-1))\] \[=\frac{t_n-1}{t_n^2}\left(\frac{1}{t_n}-3/\eta(t_n)\right)>0,\] where the last inequality uses the assumption that $t_n\geq 22$ and is easy to verify. We now know that the left-hand side of \eqref{Eq17} is increasing as a function of $t_j$ when $t_n$ is fixed, so we are left to prove \eqref{Eq17} when $t_j=t_n$. With this substitution, \eqref{Eq17} becomes \[\frac{2t_n-1}{t_n^2}-\frac{1}{t_n^3}+\frac{t_n-1}{t_n}>1,\] which is certainly true. 
\end{proof}

The next lemma is purely technical and serves no purpose for us other than allowing us to prove Lemma \ref{Lem6}. 
\begin{lemma}\label{Lem7}
If $t\geq 3$ is an integer and $x,\nu,m$ are real numbers such that \[m/2\geq\nu\geq\frac{2x-1}{x^4},\quad x\geq t, \quad\text{and}\quad m\geq\omega(t),\] then 
\begin{equation}\label{Eq24}
\frac{1}{x}+\frac{1}{x^4}+\nu-\frac{1}{x-1}\nu^{1/\eta(t)}\leq m.
\end{equation}
\end{lemma}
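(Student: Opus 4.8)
The plan is to treat \eqref{Eq24} as an inequality in the real variables $x$, $\nu$, $m$ and show that for the stated constraints the left-hand side is maximized at a boundary point where it can be checked directly. First I would observe that the left-hand side $f(\nu) := \frac{1}{x} + \frac{1}{x^4} + \nu - \frac{1}{x-1}\nu^{1/\eta(t)}$ is concave in $\nu$ (since $1/\eta(t) \in (0,1)$ makes $-\nu^{1/\eta(t)}$ concave, and the rest is linear), so on the interval $\nu \in \left[\frac{2x-1}{x^4}, m/2\right]$ it attains its maximum at one of the two endpoints. Since we want an upper bound of the form $f(\nu) \le m$, it suffices to verify the inequality at $\nu = \frac{2x-1}{x^4}$ and at $\nu = m/2$.

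For the endpoint $\nu = m/2$: substituting gives $\frac{1}{x} + \frac{1}{x^4} + \frac{m}{2} - \frac{1}{x-1}(m/2)^{1/\eta(t)} \le m$, i.e.\ we need $\frac{1}{x} + \frac{1}{x^4} \le \frac{m}{2} + \frac{1}{x-1}(m/2)^{1/\eta(t)}$. Here I would use $m \ge \omega(t)$ together with $x \ge t$. Since the right side is increasing in $m$ and the left side is decreasing in $x$, it is enough to check the case $m = \omega(t)$, $x = t$; this reduces to a finite verification in $t$ for small $t$ (say $t \in \{3,4\}$, where $\omega$ is given by the messy closed forms) plus, for $t \ge 5$ where $\omega(t) = \frac{4t-3}{t^3}$, a monotonicity-in-$t$ argument showing the gap only grows. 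For the endpoint $\nu = \frac{2x-1}{x^4}$: substituting, we need $\frac{1}{x} + \frac{1}{x^4} + \frac{2x-1}{x^4} - \frac{1}{x-1}\left(\frac{2x-1}{x^4}\right)^{1/\eta(t)} \le m$, and since $m \ge \omega(t)$ it suffices to prove this with $m$ replaced by $\omega(t)$. This is essentially inequality \eqref{Eq8} from the proof of Lemma~\ref{Lem4} (up to the harmless extra $\frac{1}{x^4}$ term versus the $\frac{(2x-1)(x-1)}{x^4}$ term there), so I would either cite that computation directly or re-run the same differentiation-in-$x$ argument: show the left-hand side is decreasing in $x$ for $x \ge t$ when $t \ge 5$, reducing to $x = t$, and handle $t \in \{3,4\}$ by direct numerical check.

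The step I expect to be the main obstacle is the explicit verification at $x = t$ for the small cases $t = 3$ and $t = 4$, because there $\omega(t)$ is an irrational expression involving $\eta(3)$ and $\eta(4)$ in an exponent, so one must bound $(5/81)^{1/\eta(3)}$ and $(7/256)^{1/\eta(4)}$ carefully enough to make the inequalities go through — this is exactly the kind of "straightforward but tedious" estimate the authors flag elsewhere. A secondary nuisance is confirming the sign of the derivative $\frac{\partial}{\partial x}$ of the left-hand sides at the two endpoints uniformly for $x \ge t \ge 5$; as in Lemma~\ref{Lem4}, after differentiating one gets terms like $-\frac{1}{\eta(t)(x-1)}\left(\frac{2x-1}{x^4}\right)^{1/\eta(t)}$ dominating the positive contributions, and one uses $\eta(t) = t\log t + \Theta(\log t)$ to see the negative term wins. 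Apart from these calculations, the structure — concavity in $\nu$ reduces to two endpoints, each endpoint reduces via monotonicity in $m$ and $x$ to a base case — is routine.
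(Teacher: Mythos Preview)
Your convexity reduction is sound (note though that $f(\nu)$ is \emph{convex}, i.e.\ concave up, not concave: $-\nu^{1/\eta(t)}$ has positive second derivative since $1/\eta(t)\in(0,1)$; this is what makes the maximum land at an endpoint). The endpoint $\nu=\frac{2x-1}{x^4}$ is handled exactly as in the paper. The real gap is at the endpoint $\nu=m/2$. After rewriting the inequality as
\[
\frac{1}{x}+\frac{1}{x^4}\;\le\;\frac{m}{2}+\frac{1}{x-1}\Bigl(\tfrac{m}{2}\Bigr)^{1/\eta(t)},
\]
you reduce to $m=\omega(t)$, $x=t$ by saying the right side is increasing in $m$ and the left side is decreasing in $x$. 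But the right side also depends on $x$ through the factor $\frac{1}{x-1}$, and it is \emph{decreasing} in $x$ as well; so the difference need not be monotone. Concretely, for $t=3$ and $m=1$ one has $(m/2)^{1/\eta(3)}\approx 0.774$, and at $x=3$ the derivative of (RHS $-$ LHS) is $-\frac{0.774}{(x-1)^2}+\frac{1}{x^2}+\frac{4}{x^5}\approx -0.194+0.127<0$: the gap is \emph{shrinking} as $x$ grows past $t$, so $x=t$ is not the worst case. The inequality still turns out to be true, but your argument does not establish it.

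The paper avoids exactly this coupling by not using $m/2$ as a split point. Instead it introduces the $m$-independent threshold $\frac{2t-1}{t^3}$: for $\nu\ge\frac{2t-1}{t^3}$ it proves the stronger bound $f(\nu)\le 2\nu$ (which gives $\le m$ since $\nu\le m/2$), first weakening $\frac{1}{x-1}$ to $\frac{1}{x}$ so that after substituting the lower bound for $\nu$ the resulting expression in $x$ is genuinely monotone and the reduction to $x=t$ is legitimate. For $\nu\le\frac{2t-1}{t^3}$ it proves $f(\nu)\le\omega(t)$ and uses convexity to reduce to the two endpoints $\nu=\frac{2x-1}{x^4}$ and $\nu=\frac{2t-1}{t^3}$, each of which again reduces cleanly to $x=t$. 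The fix for your outline is either to adopt this intermediate threshold, or to supply a direct argument that $g(x)=\frac{m}{2}+\frac{(m/2)^{1/\eta(t)}}{x-1}-\frac{1}{x}-\frac{1}{x^4}\ge 0$ for all $x\ge t$ and all $m\ge\omega(t)$ without the invalid monotonicity step.
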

\begin{proof}
Let us first assume $\nu\geq \dfrac{2t-1}{t^3}$. Since $\nu\leq m/2$, it suffices to prove that \[\frac{1}{x}+\frac{1}{x^4}+\nu-\frac{1}{x-1}\nu^{1/\eta(t)}\leq 2\nu,\] which is equivalent to \[\frac{1}{x}+\frac{1}{x^4}-\nu-\frac{1}{x-1}\nu^{1/\eta(t)}\leq 0.\] We will prove the stronger inequality \[\frac{1}{x}+\frac{1}{x^4}-\nu-\frac{1}{x}\nu^{1/\eta(t)}\leq 0.\] Because \[-\nu-\frac{1}{x}\nu^{1/\eta(t)}\leq -\frac{2t-1}{t^3}-\frac{1}{x}\left(\frac{2t-1}{t^3}\right)^{1/\eta(t)},\] we wish to show that \[\frac{1}{x}\left(1-\left(\frac{2t-1}{t^3}\right)^{1/\eta(t)}\right)+\frac{1}{x^4}-\frac{2t-1}{t^3}\leq 0.\] The left-hand side of this last inequality is decreasing as a function of $x$, so it suffices to prove that it holds when $x=t$. In this case, the inequality becomes \[\frac{1}{t}\left(1-\left(\frac{2t-1}{t^3}\right)^{1/\eta(t)}\right)+\frac{1}{t^4}-\frac{2t-1}{t^3}\leq 0,\] which one can verify is true for all $t\geq 3$. 

We are now left to prove that \eqref{Eq24} holds when $\dfrac{2x-1}{x^4}\leq\nu\leq \dfrac{2t-1}{t^3}$. We will prove the stronger inequality 
\begin{equation}\label{Eq25}
\frac{1}{x}+\frac{1}{x^4}+\nu-\frac{1}{x-1}\nu^{1/\eta(t)}\leq \omega(t).
\end{equation}
When viewed as a function of $\nu$, the left-hand side of \eqref{Eq25} is concave up. Hence, it suffices to prove \eqref{Eq25} when $\nu=\dfrac{2x-1}{x^4}$ and when $\nu=\dfrac{2t-1}{t^3}$. 

First, assume $\nu=\dfrac{2t-1}{t^3}$. It is straightforward to verify \eqref{Eq25} for $t=3$ and $t=4$, so assume $t\geq 5$. We will prove the stronger inequality 
\begin{equation}\label{Eq26}
\frac{1}{x}+\frac{1}{x^4}+\nu-\frac{1}{x}\nu^{1/\eta(t)}\leq \omega(t).
\end{equation} The left-hand side of this last inequality is decreasing as a function of $x$, so it suffices to prove it when $x=t$. With the substitutions $x=t$ and $\nu=\dfrac{2t-1}{t^3}$, \eqref{Eq26} becomes \[\frac{1}{t}+\frac{1}{t^4}+\frac{2t-1}{t^3}-\frac{1}{t}\left(\frac{2t-1}{t^3}\right)^{1/\eta(t)}\leq \frac{4t-3}{t^3},\] where we have used the fact that $\omega(t)=\dfrac{4t-3}{t^3}$ for $t\geq 5$. One can verify that this last inequality holds for all $t\geq 5$. 

Finally, we must prove that \eqref{Eq25} holds when $\nu=\dfrac{2x-1}{x^4}$. With this substitution, \eqref{Eq25} becomes \[\frac{1}{x}+\frac{1}{x^4}+\frac{2x-1}{x^4}-\frac{1}{x-1}\left(\frac{2x-1}{x^4}\right)^{1/\eta(t)}\leq\omega(t).\] One can verify this last inequality when $t=3$ and $t=4$, so we may assume $t\geq 5$. We will prove the stronger inequality 
\begin{equation}\label{Eq27}
\frac{1}{x}+\frac{2}{x^3}-\frac{1}{x}\left(\frac{1}{x^3}\right)^{1/\eta(t)}\leq\frac{4t-3}{t^3}.
\end{equation} When $x=t$, \eqref{Eq27} becomes \[\frac{1}{t}+\frac{2}{t^3}-\frac{(t-1)^3}{t^4}\leq\frac{4t-3}{t^3},\] and it is easy to check that this holds for all $t\geq 5$. Thus, it suffices to prove that the left-hand side of \eqref{Eq27} is decreasing as a function of $x$. The derivative of the left-hand side of \eqref{Eq27} with respect to $x$ is \[-x^{-2}-6x^{-4}+\left(3/\eta(t)+1\right)x^{-3/\eta(t)-2},\] which is less than \[x^{-2}\left(-1+\left(3/\eta(t)+1\right)x^{-3/\eta(t)}\right).\] In order to prove that this derivative is negative, we need only show that $(3/\eta(t)+1)x^{-3/\eta(t)}<1$. This follows from the fact that \[\left(1+\frac{3}{\eta(t)}\right)^{\eta(t)}<e^3<x^3. \qedhere\]  
\end{proof}

\begin{lemma}\label{Lem6}
Let $G=\prod_{i=1}^nK_{t_i}$, where $t_1\geq\cdots\geq t_n\geq 3$. Let $I\subseteq V(G)$ be a sorted independent set such that $\mu(I)>\omega(t_n)$. There exists $j\in[n]$ such that \[\mu(I\setminus J_{1,j})<\frac{t_j-1}{t_j^5}.\]
\end{lemma}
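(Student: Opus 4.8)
\textbf{Proof proposal for Lemma \ref{Lem6}.}

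The plan is to reduce the statement about a general sorted independent set $I$ to the compressed case handled by Lemma \ref{Lem5}. The obstruction is that compressing $I$ does not commute with intersecting against a fixed slice $J_{1,j}$: if $\widehat I = c_{i_k}\circ\cdots\circ c_{i_1}(I)$ is the compressed set produced by Lemma \ref{Lem1}, then by Lemma \ref{Lem2} it is still independent, by Lemma \ref{Lem3} its boundary has not grown, and $\mu(\widehat I)=\mu(I)>\omega(t_n)$, so Lemma \ref{Lem5} gives some $j\in[n]$ with $\mu(\widehat I\setminus J_{1,j})<\frac{t_j-1}{t_j^5}$. The trouble is that a priori $\mu(I\setminus J_{1,j})$ could be much larger than $\mu(\widehat I\setminus J_{1,j})$, since sorting and compressing can move mass into or out of the first slice in each coordinate. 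So the real content is: for the particular $j$ produced by Lemma \ref{Lem5}, the original $I$ must \emph{also} have most of its mass in $J_{1,j}$.

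The first step is to pin down what $\mu(\widehat I\setminus J_{1,j})<\frac{t_j-1}{t_j^5}$ forces about $\widehat I$ directly, and then to argue that this bound already holds for $I$ after we reinstate sortedness. Here is the key observation: $I$ is sorted, so $\mu(I\cap J_{1,j})\ge\mu(I\cap J_{a,j})$ for all $a$, and hence $\mu(I\cap J_{1,j})\ge\frac{1}{t_j}\mu(I)$, i.e.\ $\mu(I\setminus J_{1,j})\le\big(1-\tfrac{1}{t_j}\big)\mu(I)$. This alone is far too weak. Instead, I would apply Proposition \ref{Prop2} (which applies to any sorted independent set, with no size restriction) to $I$ with this same $j$: writing $\delta=\mu(I\setminus J_{1,j})$ and $\mu(I)=\frac{1}{t_n}(1-\varepsilon)$, Proposition \ref{Prop2} gives
\[\varepsilon\ge 1-\frac{t_n}{t_j}-\delta t_n+\frac{t_n}{t_j-1}\left(\frac{\delta}{t_j-1}\right)^{1/\eta(t_n)},\]
equivalently
\[\frac{1}{t_n}(1-\varepsilon)-\delta=\mu(I\cap J_{1,j})\le\frac{1}{t_j}-\frac{1}{t_j-1}\left(\frac{\delta}{t_j-1}\right)^{1/\eta(t_n)}.\]
So $\mu(I)\le \frac{1}{t_j}+\delta-\frac{1}{t_j-1}\big(\frac{\delta}{t_j-1}\big)^{1/\eta(t_n)}$. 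Since $\mu(I)>\omega(t_n)$, we get exactly the inequality $\frac{1}{t_j}+\delta-\frac{1}{t_j-1}\big(\frac{\delta}{t_j-1}\big)^{1/\eta(t_n)}>\omega(t_n)$ that appears as \eqref{Eq6} in the proof of Lemma \ref{Lem4}. Following that same calculation (or invoking Lemma \ref{Lem4} directly, which is stated for sorted independent sets), $I$ satisfies the dichotomy: $\delta<\frac{t_j-1}{t_j^5}$ or $\delta>\frac{(2t_j-1)(t_j-1)}{t_j^4}$.

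It therefore remains to rule out the ``large $\delta$'' branch for the $j$ coming from Lemma \ref{Lem5}, and this is where Lemma \ref{Lem7} is designed to be used. Suppose for contradiction that $\delta=\mu(I\setminus J_{1,j})\ge\frac{(2t_j-1)(t_j-1)}{t_j^4}$, hence (dividing into the slices) there is mass at least $\frac{\delta}{t_j-1}\ge\frac{2t_j-1}{t_j^4}$ in $I\cap J_{2,j}$ after sorting; combine this with the Proposition \ref{Prop2} bound $\mu(I\cap J_{1,j})\le\frac{1}{t_j}-\frac{1}{t_j-1}\big(\frac{\delta}{t_j-1}\big)^{1/\eta(t_n)}$ and with the fact that $\mu(I)=\mu(I\cap J_{1,j})+\delta$. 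Setting $\nu=\mu(I\cap J_{2,j})$, $x=t_j$, $m=\mu(I)$, the hypotheses $m\ge\omega(t_n)$, $x\ge t_n$, and $\nu\ge\frac{2x-1}{x^4}$ of Lemma \ref{Lem7} hold (the condition $\nu\le m/2$ is automatic because $J_{2,j}$-mass is at most half of $I$'s total when $I$ is sorted and $t_j\ge 2$, since the $J_{1,j}$ slice is at least as large), so Lemma \ref{Lem7} gives $\frac{1}{x}+\frac{1}{x^4}+\nu-\frac{1}{x-1}\nu^{1/\eta(t_n)}\le m$, which contradicts the lower bound on $m=\mu(I)$ obtained by adding $\mu(I\cap J_{1,j})$ and $\delta\ge\nu$. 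Hence the large branch is impossible and $\delta<\frac{t_j-1}{t_j^5}$, as claimed. The main obstacle is bookkeeping: making sure the quantity fed to Lemma \ref{Lem7} as $\nu$ is exactly what sortedness and Proposition \ref{Prop2} control, and that the inequality $\nu\le m/2$ genuinely holds; once the right substitution is identified, the contradiction is mechanical.
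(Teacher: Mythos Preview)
Your final contradiction step does not go through, and in fact your argument never uses the output of Lemma~\ref{Lem5} after you produce the index $j$. Lemma~\ref{Lem7} tells you that
\[
\frac{1}{t_j}+\frac{1}{t_j^4}+\nu-\frac{1}{t_j-1}\nu^{1/\eta(t_n)}\le \mu(I),
\]
which is a \emph{lower} bound on $m=\mu(I)$. To obtain a contradiction you would need a strict \emph{upper} bound $\mu(I)<\frac{1}{t_j}+\frac{1}{t_j^4}+\nu-\frac{1}{t_j-1}\nu^{1/\eta(t_n)}$. The only upper bound you have at hand is the one from Proposition~\ref{Prop2}, namely $\mu(I)\le \frac{1}{t_j}+\delta-\frac{1}{t_j-1}(\delta/(t_j-1))^{1/\eta(t_n)}$, and there is no reason this is below the Lemma~\ref{Lem7} expression when $\delta$ is in the large branch (indeed, Lemma~\ref{Lem4} says precisely that this Proposition~\ref{Prop2} bound \emph{is} compatible with $\mu(I)>\omega(t_n)$ on the large branch). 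A tell-tale sign: your ``rule out the large branch'' paragraph uses nothing about the particular $j$ coming from Lemma~\ref{Lem5}, so if it worked it would eliminate the large branch for every $j$ and make Lemma~\ref{Lem5} superfluous.

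The paper supplies the missing upper bound via an additional idea. Rather than fully compressing, one argues (using that $c_i$ with $i\neq j$ preserves $\mu(\,\cdot\,\setminus J_{1,j})$) that there exists $j$ with $\mu(c_j(I)\setminus J_{1,j})<\frac{t_j-1}{t_j^5}$; otherwise a fully compressed set would violate Lemma~\ref{Lem5}. One then builds an explicit injection $\psi:\partial(I\cap J_{2,j})\cap J_{1,j}\cap c_j(I)\to I\cap J_{2,j}$, which yields $\mu(J_{1,j}\setminus c_j(I))\ge \frac{1}{t_j-1}\nu^{1/\eta(t_n)}-\nu$. Combining this with $\mu(c_j(I)\setminus J_{1,j})<\frac{t_j-1}{t_j^5}$ and $\mu(I)=\mu(c_j(I))$ gives the genuine strict upper bound
\[
\mu(I)<\frac{1}{t_j}+\frac{1}{t_j^4}+\nu-\frac{1}{t_j-1}\nu^{1/\eta(t_n)},
\]
which is what actually contradicts Lemma~\ref{Lem7}. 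Your proposal is missing both the correct choice of $j$ (via a single compression $c_j$, not full compression) and the injection that converts the information about $c_j(I)$ into an upper bound on $\mu(I)$ involving $\nu$.
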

\begin{proof}
Our proof follows Brakensiek's proof of Lemma 16 in \cite{Brakensiek}. Assume that the lemma is false, and deduce from Lemma \ref{Lem4} that \[\mu(I\setminus J_{1,j})>\frac{(2t_j-1)(t_j-1)}{t_j^4}\] for all $j\in[n]$. Whenever $i\neq j$, we have \[\mu(c_i(I)\setminus J_{1,j})=\mu(I\setminus J_{1,j})>\frac{(2t_j-1)(t_j-1)}{t_j^4}.\] If 
\begin{equation}\label{Eq19}
\mu(c_j(I)\setminus J_{1,j})>\frac{(2t_j-1)(t_j-1)}{t_j^4}
\end{equation}
for all $j\in [n]$, then we can use Lemmas \ref{Lem1} and \ref{Lem2} to obtain a compressed independent set $I'$ with $\mu(I')=\mu(I)$ and $\mu(I'\setminus J_{1,j})>\dfrac{(2t_j-1)(t_j-1)}{t_j^4}$ for all $j\in[n]$. This contradicts Lemma \ref{Lem5}, so \eqref{Eq19} must fail for some $j\in[n]$. Lemma \ref{Lem2} tells us that $c_j(I)$ is an independent set, and it is sorted because $I$ is sorted. By Lemma \ref{Lem4}, 
\begin{equation}\label{Eq23}
\mu(c_j(I)\setminus J_{1,j})<\frac{t_j-1}{t_j^5}.
\end{equation} 

We have 
\begin{equation}\label{Eq20}
\frac{2t_j-1}{t_j^4}<\frac{1}{t_j-1}\mu(I\setminus J_{1,j})\leq\mu(I\cap J_{2,j})\leq\mu(I)/2,
\end{equation}
where the last two inequalities follow from the fact that $I$ is sorted. We will prove that 
\begin{equation}\label{Eq21}
\mu(\partial(I\cap J_{2,j})\cap J_{1,j}\cap c_j(I))\leq\mu(I\cap J_{2,j})
\end{equation}
by constructing an injection $\psi:\partial(I\cap J_{2,j})\cap J_{1,j}\cap c_j(I)\to I\cap J_{2,j}$. If $w\in\partial(I\cap J_{2,j})\cap J_{1,j}\cap c_j(I)$, then $w_j=1$. Let $\psi(w)=(w_1,\ldots,w_{j-1},2,w_{j+1},\ldots,w_n)$. This map is clearly injective, so we just need to check that $\psi(w)$ is actually an element of $I\cap J_{2,j}$. We know that $\psi(w)\in J_{2,j}$, so we must check that $\psi(w)\in I$. Because $w\in c_j(I)$, there exists $z\in I$ such that $z_i=w_i$ for all $i\neq j$. Since $w\in\partial (I\cap J_{2,j})$, there exists $y\in I$ such that $y$ is adjacent to $w$ and $y_j=2$. This means that $y_i\neq w_i=z_i$ for all $i\neq j$. Because $y$ and $z$ are distinct elements of the independent set $I$, they are not adjacent. This means that they must agree in some coordinate, which must be the $j^\text{th}$ coordinate. It follows that $z_j=2$, so $\psi(w)=z\in I$ as desired. 

Using \eqref{Eq21} and Corollary \ref{Cor1}, we find that \[\mu(J_{1,j}\setminus c_j(I))\geq\mu((\partial(I\cap J_{2,j})\cap J_{1,j})\setminus c_j(I))=\mu(\partial(I\cap J_{2,j})\cap J_{1,j})-\mu((\partial(I\cap J_{2,j})\cap J_{1,j})\cap c_j(I))\] \[\geq\mu(\partial(I\cap J_{2,j})\cap J_{1,j})-\mu(I\cap J_{2,j})=\frac{1}{t_j-1}\mu(\partial(I\cap J_{2,j}))-\mu(I\cap J_{2,j})\]
\begin{equation}\label{Eq22} \geq\frac{1}{t_j-1}\Phi(G,\mu(I\cap J_{2,j}))-\mu(I\cap J_{2,j})\geq\frac{1}{t_j-1}\mu(I\cap J_{2,j})^{1/\eta(t_n)}-\mu(I\cap J_{2,j}).
\end{equation}

Finally, combining \eqref{Eq23} and \eqref{Eq22} gives \[\mu(I)=\mu(c_j(I))=\mu(c_j(I)\cap J_{1,j})+\mu(c_j(I)\setminus J_{1,j})=\frac{1}{t_j}-\mu(J_{1,j}\setminus c_j(I))+\mu(c_j(I)\setminus J_{1,j})\] \[<\frac{1}{t_j}-\mu(J_{1,j}\setminus c_j(I))+\frac{t_j-1}{t_j^5}\leq\frac{1}{t_j}+\frac{t_j-1}{t_j^5}-\frac{1}{t_j-1}\mu(I\cap J_{2,j})^{1/\eta(t_n)}+\mu(I\cap J_{2,j})\] \[<\frac{1}{t_j}+\frac{1}{t_j^4}-\frac{1}{t_j-1}\mu(I\cap J_{2,j})^{1/\eta(t_n)}+\mu(I\cap J_{2,j}).\] However, this is a contradiction because we can apply Lemma \ref{Lem7} with $t=t_n$, $x=t_j$, $\nu=\mu(I\cap J_{2,j})$, and $m=\mu(I)$ (\eqref{Eq20} guarantees that the hypotheses of Lemma \ref{Lem7} are satisfied) to find that \[\frac{1}{t_j}+\frac{1}{t_j^4}-\frac{1}{t_j-1}\mu(I\cap J_{2,j})^{1/\eta(t_n)}+\mu(I\cap J_{2,j})\leq \mu(I). \qedhere\]
\end{proof}

We are finally prepared to complete the proof of Theorem \ref{Thm6}. A brief overview of the proof is as follows. We first assume the graph $G$ under consideration is a direct product of complete graphs. We use Lemma \ref{Lem6} to see that there is a choice of $j$ such that the quantity $\delta$ in Proposition \ref{Prop2} is somewhat small. Proposition \ref{Prop2} then allows us to deduce that $\delta$ is very small, which in turn allows us to prove the theorem in this case. To complete the proof, we show how to deduce the general form of the theorem from the version for direct products of complete graphs.  
 
\begin{proof}[Proof of Theorem \ref{Thm6}]
Let $G=\prod_{i=1}^nK[u_i,t_i]$ be as in the statement of the theorem, and assume for the moment that $u_i=1$ for all $i\in[n]$. In other words, $G=\prod_{i=1}^nK_{t_i}$, where $t_1\geq\cdots\geq t_n\geq 3$. Let $I\subseteq V(G)$ be an independent set with $\mu(I)=\dfrac{1}{t_n}(1-\varepsilon)>\omega(t_n)$. By relabeling the vertices in each of the graphs $K_{t_i}$ if necessary, we can assume $I$ is sorted. According to Lemma \ref{Lem6}, we can choose $j\in[n]$ such that $\mu(I\setminus J_{1,j})<\dfrac{t_j-1}{t_j^5}<\dfrac{1}{t_j^4}$. Let $\delta=\mu(I\setminus J_{1,j})$. We know from Proposition \ref{Prop2} that 
\begin{equation}\label{Eq29}\varepsilon\geq 1-\frac{t_n}{t_j}-\delta t_n+\frac{t_n}{t_j-1}\left(\frac{\delta}{t_j-1}\right)^{1/\eta(t_n)}.
\end{equation} We will first prove that \begin{equation}\label{Eq28}
-\delta t_n+\frac{t_n}{t_j-1}\left(\frac{\delta}{t_j-1}\right)^{1/\eta(t_n)}>0.
\end{equation}
It will then follow from \eqref{Eq29} that $\varepsilon>1-\dfrac{t_n}{t_j}$, which is equivalent to $t_j<\dfrac{t_n}{1-\varepsilon}$. 

Let \[f(x)=-t_nx+\frac{t_n}{t_j-1}\left(\frac{x}{t_j-1}\right)^{1/\eta(t_n)}.\] If $0<x<\dfrac{1}{t_j^4}$, then \[f'(x)=t_n\left(\frac{1}{(t_j-1)^{1+1/\eta(t_n)}}\frac{1}{\eta(t_n)}x^{1/\eta(t_n)-1}-1\right)>t_n\left(\frac{1}{t_j^{1+1/\eta(t_n)}}\frac{1}{\eta(t_n)}x^{1/\eta(t_n)-1}-1\right)\] \[>t_n\left(\frac{1}{t_j^{1+1/\eta(t_n)}}\frac{1}{\eta(t_n)}\left(\frac{1}{t_j^4}\right)^{1/\eta(t_n)-1}-1\right)=\frac{t_n}{\eta(t_n)}\left(t_j^{3-5/\eta(t_n)}-\eta(t_n)\right)\] \[\geq \frac{t_n}{\eta(t_n)}\left(t_n^{3-5/\eta(t_n)}-\eta(t_n)\right).\] It is easy to verify that this last expression is positive, so $f(x)$ is increasing for $0<x<\dfrac{1}{t_j^4}$. This immediately implies \eqref{Eq28} since $f(0)=0$. 

Next, let \[g(x)=1-\frac{t_n}{x}-\delta t_n+\frac{t_n}{x-1}\left(\frac{\delta}{x-1}\right)^{1/\eta(t_n)}.\] Suppose $t_n\leq x\leq t_j$. We have \[g'(x)=t_n\left(x^{-2}-\delta^{1/\eta(t_n)}\left(1+1/\eta(t_n)\right)(x-1)^{-2-1/\eta(t_n)}\right)\] \[=\frac{\delta^{1/\eta(t_n)}t_n}{(x-1)^{2+1/\eta(t_n)}}\left(\frac{(x-1)^{2+1/\eta(t_n)}}{x^2\delta^{1/\eta(t_n)}}-\left(1+1/\eta(t_n)\right)\right)\] \[\geq\frac{\delta^{1/\eta(t_n)}t_n}{(x-1)^{2+1/\eta(t_n)}}\left(\frac{(x-1)^{2+1/\eta(t_n)}}{x^2(1/t_j^4)^{1/\eta(t_n)}}-\left(1+1/\eta(t_n)\right)\right)\]\[\geq\frac{\delta^{1/\eta(t_n)}t_n}{(x-1)^{2+1/\eta(t_n)}}\left(\frac{(x-1)^{2+1/\eta(t_n)}}{x^2(1/x^4)^{1/\eta(t_n)}}-\left(1+1/\eta(t_n)\right)\right)\] 
\begin{equation}\label{Eq30}
\geq\frac{\delta^{1/\eta(t_n)}t_n}{(x-1)^{2+1/\eta(t_n)}}\left(\frac{(x-1)^{2+1/\eta(t_n)}}{x^{2-4/\eta(t_n)}}-\left(1+1/\eta(t_n)\right)\right).
\end{equation} It is easy to check that $\dfrac{(x-1)^{2+1/\eta(t_n)}}{x^{2-4/\eta(t_n)}}$ is increasing in $x$, so \[\frac{(x-1)^{2+1/\eta(t_n)}}{x^{2-4/\eta(t_n)}}-\left(1+1/\eta(t_n)\right)\geq\frac{(t_n-1)^{2+1/\eta(t_n)}}{t_n^{2-4/\eta(t_n)}}-\left(1+1/\eta(t_n)\right)\] \[=(t_n-1)^{1/\eta(t_n)}\frac{(t_n-1)^2}{t_n^2}t_n^{4/\eta(t_n)}-\left(1+1/\eta(t_n)\right)=(t_n-1)^{1/\eta(t_n)}\frac{t_n^2}{(t_n-1)^2}-\left(1+1/\eta(t_n)\right)\] \[>\frac{t_n}{t_n-1}-\left(1+1/\eta(t_n)\right)=\frac{1}{t_n-1}-\frac{1}{\eta(t_n)}>0.\] Combining this with \eqref{Eq30} shows that $g(x)$ is increasing in $x$ when $t_n\leq x\leq t_j$. In particular, $g(t_j)\geq g(t_n)$. Referring back to \eqref{Eq29}, we see that 
\begin{equation}\label{Eq31}
\varepsilon\geq -\delta t_n+\frac{t_n}{t_n-1}\left(\frac{\delta}{t_n-1}\right)^{1/\eta(t_n)}.
\end{equation} At this point, we simply cite the proof of Lemma 11 in \cite{Brakensiek}. In that proof, Brakensiek obtains the equation \eqref{Eq31} under the weaker assumption that $\delta<\dfrac{1}{t_n^3}$ and proves that 
\begin{equation}\label{Eq43}
\delta<4\varepsilon^{\eta(t_n)}
\end{equation} (note that he uses the symbol $t$ in place of $t_n$). Applying the exact same argument proves that \eqref{Eq43} holds in our case as well. 

We now prove the theorem in the more general case in which $G=\prod_{i=1}^nK[u_i,t_i]$ with $t_1\geq\cdots\geq t_n\geq 3$. Let $G'=\prod_{i=1}^nK_{t_i}$, and consider the collapsing map $\coll:V(G)\to V(G')$ defined in Section \ref{Iso}. More precisely, $\coll=\coll_1\times\cdots\times \coll_n$, where $\coll_i:V(K[u_i,t_i])\to V(K_{t_i})$ sends every vertex in the partite set $X_i(a)$ to the vertex $a$. Because the complete multipartite graphs $K[u_i,t_i]$ in the product defining $G$ are \emph{balanced}, we have 
\begin{equation}\label{Eq44}
\mu(\coll^{-1}(T))=\mu(T)
\end{equation} for all $T\subseteq G'$. We use $J_{a,i}$ to refer to the subset \[V(K[u_1,t_1])\times\cdots\times V(K[u_{i-1},t_{i-1}]\times X_i(a)\times V(K[u_{i+1},t_{i+1}])\times\cdots\times V(K[u_{i+1},t_n])\] of $V(G)$ and use $J_{a,i}'$ to refer to the subset \[[t_1]\times\cdots\times[t_{i-1}]\times\{a\}\times[t_{i+1}]\times\cdots\times[t_n]=\coll(J_{a,i})\] of $V(G')$. Let $I\subseteq V(G)$ be an independent set with $\mu(I)=\dfrac{1}{t_n}(1-\varepsilon)>\omega(t_n)$. The collapsing map sends independent sets to independent sets and satisfies $\mu(\coll(T))=\mu(\coll^{-1}(\coll(T)))\geq\mu(T)$ for all $T\subseteq V(G)$. Therefore, the set $I'=\coll(I)$ is an independent set of $G'$ with $\mu(I')=\dfrac{1}{t_n}(1-\varepsilon')$ for some $\varepsilon'\leq\varepsilon$. We already know the theorem holds for direct products of complete graphs (such as $G'$), so there exist $j\in[n]$ and $a\in[t_j]$ such that \[t_j<\frac{t_n}{1-\varepsilon'}\quad\text{and}\quad\mu(I'\setminus J_{a,j}')<4(\varepsilon')^{\eta(t_n)}.\] We have $I\setminus J_{a,j}\subseteq\coll^{-1}(I'\setminus J_{a,j}')$, so \[\mu(I\setminus J_{a,j})\leq\mu(\coll^{-1}(I'\setminus J_{a,j}'))=\mu(I'\setminus J_{a,j}')<4(\varepsilon')^{\eta(t_n)}\leq 4\varepsilon^{\eta(t_n)}\] as desired. 
\end{proof}

\section{Concluding Remarks}
We have proven Burcroff's conjecture that $\IR(G)=\alpha(G)$ whenever $G$ is a direct product of balanced complete multipartite graphs except in $37$ exceptional cases. Our proof relied on the fact that the complete multipartite graphs in the product are balanced. As mentioned in the introduction, our new Conjecture \ref{Conj2} strengthens Burcroff's conjecture by removing the assumption that the graphs in the product are balanced.

In Theorem \ref{Thm2}, we gave an explicit recursive formula for the vertex isoperimetric profile of the graph $\prod_{i=1}^nH_i$ when $H_1,\ldots,H_n$ are complete multipartite graphs satisfying $\beta(H_1)\leq\cdots\leq\beta(H_n)$ and \[\prod_{k\in A}\frac{1-\beta(H_k)}{\beta(H_k)}\geq\frac{1-\beta(H_n)}{\beta(H_n)}\] for all nonempty $A\subseteq [n-1]$. This last condition was satisfied for all the graphs we considered in our applications, but it would still be interesting to compute the vertex isoperimetric profiles of direct products of complete multipartite graphs that fail to satisfy this condition. It would also be interesting to prove an independent set stability result like Theorem \ref{Thm6} for direct products of complete multipartite graphs that are not necessarily balanced. 

It could be possible to weaken the hypothesis that $\mu(I)>\omega(t_n)$ in Theorem \ref{Thm6}. Doing so could allow one to prove Burcroff's conjecture for several of the $37$ remaining cases. Alternatively, one could see if an argument similar to the one used in \cite{defantandiyer} to prove the conjecture in the case $n\leq 3$ could also handle the case $n=4$; this would prove $26$ of the remaining $37$ cases.

\section{Acknowledgments}
The first author was supported in part by an ISF Grant No. 281/17 and by the Simons Foundation. The second author was supported by a Fannie and John Hertz Foundation Fellowship and an NSF Graduate Research Fellowship.

\end{document}